\documentclass[11pt]
{amsart}
\usepackage{amssymb,amsmath,amsthm,amsfonts,amsopn,url,xcolor,hyperref,enumerate,mathtools,microtype,MnSymbol,comment}
\usepackage[normalem]{ulem}
\usepackage[all]{xy}
\input xy
\xyoption{all}
\usepackage{amscd}
\usepackage{soul}

\theoremstyle{plain}
\newtheorem{thm}{Theorem}[section]

\newtheorem{prop}[thm]{Proposition}
\newtheorem{lemma}[thm]{Lemma}
\newtheorem{cor}[thm]{Corollary}

\theoremstyle{definition}
\newtheorem{defn}[thm]{Definition}
\newtheorem*{defn*}{Definition}
\newtheorem*{question*}{Question}

\newtheorem{example}[thm]{Example}
\newtheorem*{example*}{Example}
\newtheorem{rem}[thm]{Remark}
\newtheorem*{rem*}{Remark}
\newtheorem{notation}[thm]{Notation}

\newcommand{\ideal}[1]{\mathfrak{#1}}
\newcommand{\m}{\ideal{m}}

\newcommand{\p}{\ideal{p}}

\newcommand{\func}[1]{\mathrm{#1} \,}

\newcommand{\depth}{\func{depth}}

\newcommand{\coker}{\func{coker}}
\newcommand{\im}{\func{im}}

\newcommand{\ra}{\rightarrow}

\DeclareMathOperator{\len}{\lambda}

\DeclareMathOperator{\ann}{ann}

\DeclareMathOperator{\Hom}{Hom}

\newcommand{\be}{\begin{enumerate}}
\newcommand{\ee}{\end{enumerate}}

\newcommand{\li}
 {\leftfootline}

\newcommand{\onto}{\twoheadrightarrow}

\newcommand{\into}{\hookrightarrow}

\newcommand{\cM}{\mathcal{M}}
\newcommand{\cP}{\mathcal{P}}
\renewcommand{\phi}{\varphi}

\DeclareMathOperator{\Soc}{Soc}


\newcommand{\fp}{\mathfrak{p}}

\newcommand{\subsel}{submodule selector}

\newcommand{\dual}{\smallsmile}
\newcommand{\cl}{{\mathrm{cl}}}
\let\int\relax
\DeclareMathOperator{\int}{i}

\newcommand{\cvl}{\color{violet}}

\newcommand{\fg}{finitely generated}

\DeclareMathOperator{\soc}{soc}

\DeclareMathOperator{\core}{-core}
\DeclareMathOperator{\hull}{-hull}

\newcommand{\fcog}{finitely cogenerated}
\newcommand{\bemp}{basically empty}
\newcommand{\bfull}{basically full}
\newcommand{\clr}{complete local ring}
\newcommand{\RaR}{{\mathrm {RR}}}
\newcommand{\Jcolsym}[1]{{#1} {\rm bf}}

\newcommand{\Jcol}[3]{{#2}^{\Jcolsym #1}_{#3}}

\newcommand{\Jintrelsym}[1]{{#1}{\rm be}}

\newcommand{\Jintrel}[3]{{#3}_{\Jintrelsym{#1}}^{#2}}

\author{Neil Epstein}
\address{Department of Mathematical Sciences \\ George Mason University \\ Fairfax, VA  22030}
\email{nepstei2@gmu.edu}

\author{Rebecca R.G.}
\address{Department of Mathematical Sciences \\ George Mason University \\ Fairfax, VA  22030}
\email{rrebhuhn@gmu.edu}

\author{Janet Vassilev}
\address{Department of Mathematics and Statistics \\ University of New Mexico \\ Albuquerque, NM 87131}
\email{jvassil@math.unm.edu }

\title[Integral closure, basically full closure, and duality]{Integral closure, basically full closure, and duals of nonresidual closure operations}
\subjclass[2010]{Primary: 13J10, Secondary: 13B22, 13C60, 13C13} 
\keywords{closure operation, test ideal, interior operation, Nakayama closure, integral closure, basically full, core, Matlis duality, complete local rings}

\date{\today}
\begin{document}
\begin{abstract}
We develop a duality for operations on nested pairs of modules that generalizes the duality between absolute interior operations and residual closure operations from \cite{nmeRG-cidual}, extending our previous results to the expanded context.  We apply this duality in particular to integral 
and basically full closures and their respective cores to obtain integral
and basically empty interiors and their respective hulls.   
We also dualize some of the known formulas for the core of an ideal to obtain formulas for the hull of a submodule of the injective hull of the residue field.  The article concludes with illustrative examples in a numerical semigroup ring.
\end{abstract}

\maketitle
\setcounter{tocdepth}{1} 
\tableofcontents

\section{Introduction}

Integral closure of ideals and modules is of central importance in commutative algebra, and thus has been extensively studied (e.g.,  \cite{HuSw-book,Vascon05} are books on the subject). In fact, through his examination of integral closure of fractional ideals of a domain, Krull \cite{Krull-IT} commenced the first study of properties of closure operations in commutative algebra.  Integral closure is especially important because of its relation to other notions in commutative ring theory.

An ideal $J\subseteq I$, was originally termed a reduction of $I$ by Northcott and Rees in \cite{NR-idealreductions} if there exists a natural number $r$, such that $I^{n+1}=JI^n$ for all $n \geq r$; they showed that when $R$ is Noetherian, $J$ is a reduction of $I$ if and only if the two ideals have the same integral closure.  Rees and Sally's original definition of the core of an ideal  \cite{RS-core} was through reductions; hence, intrinsically relies on the notion of integral closure. 

Basically full closure with respect to an ideal $J$ was first defined for $J=\m$ in \cite{HRR-bf}, and further developed and expanded in \cite{HLNR, Va-*full,VaVr, Rush-contracted,Dao-colon}. If $R$ is a Noetherian domain, Ratliff and Rush showed in \cite[Proposition 3.1]{RRdeltamod} that an ideal is $J$-basically full (closed) for all ideals $J$ if and only if the ideal is integrally closed  (although they didn't phrase it that way; see also Vasconcelos \cite[Proposition 1.58]{Vascon05} for a more on-point version). 

Both integral closure and basically full closure are examples of closure operations that are nonresidual, i.e., it is possible to come up with surjective $R$-module maps $\pi:M \twoheadrightarrow P$ such that the closure of $\ker(\pi)$ in $M$ is not equal to the pre-image of the closure of 0 in $P$.

In previous work \cite{nmeRG-cidual, ERGV-chdual}, the authors explored a duality operation that takes a residual closure operation on modules over a complete local ring (such as tight closure, module closures, or Frobenius closure) to an absolute interior operation (e.g., in the above cases, the tight interior explored in \cite{nmeSc-tint}, trace as in \cite{PeRG}, and the novel Frobenius interior, respectively).  We showed that the interior of the ring tends to coincide with the common annihilator of closures of submodules (generalizing \emph{test ideals}). We also dualized the notions of reductions and core of a submodule with respect to a residual closure operation to obtain \emph{expansions} and the \emph{hull} of a submodule with respect to an absolute interior operation.

However, the above work depends on the closure operation in question being \emph{residual}, even though some closure operations of interest are not residual. Motivated by the non-residual examples of integral closure and basically full closure, we extend our results (and our duality operation) to nonresidual closure operations (Section \ref{sec:nonresidualdual}). 
We find that the dual of such a closure operation is not an absolute interior operation, but rather a \emph{relative} one.  That is, if $L \subseteq N \subseteq M$, the interior of $L$ in $N$ may be strictly smaller than the interior of $L$ in $M$.  Even so, we are able to extend our duality results involving cores and hulls (Section \ref{sec:nonresidualcorehullduality}).  

To develop our newly extended duality operation we define a \emph{pair operation} (Section \ref{sec:backgroundpairops}), which assigns to a pair $L \subseteq M$ of modules a submodule of $M$ which does not have any predetermined containment relation with respect to $L$.  In the case of interiors (resp., closures), it is contained in (resp., contains) $L$.

 In \cite{HRR-bf}, Heinzer, Ratliff and Rush define a submodule $L$ of a finitely generated module $M$ to be basically full if for every submodule $N \subseteq M$ properly containing $L$, no minimal generating set of $L$ can be extended to a generating set of $N$.  We define a basically empty submodule $A$ of an Artinian module $B$ as one satisfying the property that for all proper submodules $C$ in $A$ no minimal cogenerating set (see \cite{Vam} or Section \ref{sec:cogen}) of $B/A$ can be extended to a minimal cogenerating set of $B/C$.  We spend a considerable amount of effort analyzing the operations of  ($J$-)\emph{basically full closure} and its dual, the ($J$-)\emph{basically empty interior}, for an ideal $J$ of $R$ (Section \ref{sec:bfcbei}). Both operations have nice expressions in terms of colons:  \[\Jcol{J}{N}{M}=(J N:_M J) \text{ and } \Jintrel{J}{M}{N}=J(N:_MJ)\] for $N \subseteq M$ $R$-modules.  We show in Theorem \ref{thm:Jcoldual} that these operations are dual to each other. Not only are the $J$-basically full closure and $J$-basically empty interiors dual to each other, but we show in Lemma~\ref{lem:bfbe} that basically full submodules of a finitely generated module are also the dual notion to basically empty submodules of an Artinian module.

We then apply our duality results to integral closure and integral interior (Section \ref{sec:integralhulls}). 
We determine formulas for the integral hull of some submodules of the injective hull of the residue field in Theorem~\ref{thm:hullform} and Theorem~\ref{thm:hullIbf}.
We additionally realize formulas for  $*\core(I)$  in terms of the $I$-basically full interior of a minimal reduction of $I$ (Corollary \ref{cor:*corebe}) and $*\hull(0:_E I)$ in terms of the  $I$-basically full closure of $(0:_E J)$ for $J$ a minimal reduction of $I$ (Theorem~\ref{thm:*hullform}). 
We conclude the article by computing examples of basically full closures and basically empty interiors of ideals and their cores and hulls respectively in the completion of a numerical semigroup ring (Section \ref{sec:examples}).

\section{Background and pair operations}
\label{sec:backgroundpairops}

In this section, we define pair operations and several important properties. We describe the duality between closure operations and interior operations over a \clr\ as first given by the first two named authors in \cite{nmeRG-cidual}. We then recall the definition of a Nakayama closure, $\cl$-reductions, and the $\cl$-core, and give some of their properties.

We had previously (see \cite{nmeRG-cidual}) defined a duality between submodule selectors which we parlayed into a duality between residual closure operations and (absolute) interior operations.  However, in this paper we extend this duality to the more general context of pair operations, allowing us to deal effectively with nonresidual closure operations such as integral closure and basically full closure.

\begin{defn}[{\cite{nmeRG-cidual}}]
Let $R$ be a ring, not necessarily commutative. Let $\cM$ be a class of (left) $R$-modules that is closed under taking submodules and quotient modules.  Let $\cP := \cP_\cM$ denote the set of all pairs $(L,M)$ where $M \in \cM$ and $L$ is a submodule of $M$ in $\cM$.

A \emph{\subsel} is a function $\alpha: \cM \rightarrow \cM$ such that \begin{itemize}
 \item $\alpha(M) \subseteq M$ for each $M \in \cM$, and
 \item for any isomorphic pair of modules $M, N \in \cM$ and any isomorphism $\phi: M \rightarrow N$, we have $\phi(\alpha(M)) = \alpha(\phi(M))$.
  \end{itemize}
  
An \emph{absolute} interior operation \footnote{ In \cite{nmeRG-cidual}, this is simply called an interior operation.} is a submodule selector that is
\begin{itemize}
    \item \emph{order-preserving}, i.e. for any $L \subseteq M \in \cM$, $\alpha(L) \subseteq \alpha(M)$, and
    \item  \emph{idempotent}, i.e. for all $M \in \cM$, $\alpha(\alpha(M))=\alpha(M)$.
\end{itemize} 

A submodule selector $\alpha$ is \emph{functorial} if for any $g: M \rightarrow N$ in $\cM$, we have $g(\alpha(M)) \subseteq \alpha(N)$.
\end{defn}

Next we define pair operations and an assortment of properties that they can have. Note that both closure operations and relative interior operations are examples of pair operations.  The notion of a pair operation is useful in that it encompasses various sorts of operations  (closures, interiors, hulls, cores) within a common framework that we can manipulate in a uniform way.  Moreover, it allows us to analyze our duality operations in a symmetric way, and it makes proofs shorter.
 
As such, we have the following:
\begin{defn}\label{def:pairops}
Let $\cP$ be a collection of pairs $(L,M)$, where $L$ is a submodule of $M$, 
such that whenever $\phi:M \to M'$ is an isomorphism and $(L,M) \in \cP$, $(\phi(L),M') \in \cP$ as well.

A \emph{pair operation} is a function $p$ that sends each pair $(L,M) \in \cP$ to a submodule $p(L,M)$ of $M$, in such a way that whenever $\phi: M \ra M'$ is an $R$-module isomorphism
 and $(L,M) \in \cP$, then 
  $\phi(p(L,M)) = p(\phi(L),M')$.  When $(L,M) \in \cP$, we say that $p$ is \begin{itemize}
    \item \emph{idempotent} if whenever $(L,M) \in \cP$ and $(p(L,M), M) \in \cP$, we always have $p(p(L,M),M)=p(L,M)$;
    \item \emph{extensive}
    if we always have $L \subseteq p(L,M)$;
    \item \emph{intensive} if we always have $p(L,M) \subseteq L$;
    \item \emph{order-preserving on submodules} if whenever $L \subseteq N \subseteq M$ such that $(L,M), (N,M) \in \cP$, we have $p(L,M) \subseteq p(N,M)$;
    \item \emph{order-preserving on ambient modules} if whenever  $L \subseteq N \subseteq M$
    such that $(L,N), (L,M) \in \cP$, 
    we have $p(L,N) \subseteq p(L,M)$;
    \item \emph{surjection-functorial} if whenever $\pi:M \twoheadrightarrow M'$ is a surjection and $(L,M),(\pi(L),M') \in \cP$, we have $\pi(p(L,M)) \subseteq p(\pi(L),M')$.  Equivalently,  when $(L,M) \in \cP$ and for 
    $U \subseteq M$, $((L+U)/U,M/U) \in \cP$, then $(p(L,M)+U)/U \subseteq p((L+U)/U, M/U)$;
    \item \emph{functorial} if whenever $g: M \ra M'$ and $(L,M),(g(L),M') \in \cP$, we have $g(p(L,M)) \subseteq p(g(L), M')$. (Note that if $(g(L),g(M))$ is also in $\cP$, it is equivalent that $p$ be both  order-preserving on ambient modules and surjection-functorial, by the usual epi-monic factorization); 
    \item a \emph{closure operation} if it is extensive, order-preserving on submodules, and idempotent;
    \item a \emph{(relative) interior operation} if it is intensive, order-preserving on submodules, and idempotent;
     \item \emph{absolute} if whenever $L \subseteq N\subseteq M$ 
     such that $(L,M), (L,N) \in \cP$, 
     we have $p(L,M) = p(L,N)$;
    \item \emph{residual} if whenever $L \subseteq N \subseteq M$ 
    are such that $(N/L, M/L), (N,M) \in \cP$, 
    we have 
   $p(N,M) = \pi^{-1}(p(N/L, M/L))$, where $\pi: M \onto M/L$ is the natural surjection;
    \item \emph{restrictable} if whenever $L \subseteq M$ and $N \subseteq M$ are such that $(L\cap N, N), (L,M) \in \cP$, we have $p(L\cap N, N) \subseteq p(L,M)$, or equivalently $p(L\cap N, N) \subseteq p(L,M) \cap N$.
\end{itemize}
\end{defn}

\begin{notation}
Throughout the paper, when $p$ is an extensive (e.g., closure) operation, we will write $N_M^p$ for $p(N,M)$, and when $p$ is an intensive (e.g., relative interior) operation, we will write $N_p^M$ for $p(N,M)$.
\end{notation}

\begin{rem}
Recall that in \cite{nmeRG-cidual}, an extensive operation $e$ was called \emph{residual} if whenever $q: M \onto P$ were a surjection, we had $(\ker q)^e_M = q^{-1}(0^e_P)$.  Note that this is equivalent to the definition given above.  Indeed it follows from the first isomorphism theorem that the current definition implies the older one.  For the converse, suppose $e$ is a residual  operation in the sense of \cite{nmeRG-cidual} and let $L \subseteq N \subseteq M$ such that  
$(N,M)$, $(0, M/N)$, $(N/L, M/L) \in \cP$.  Let $\pi: M \onto M/L$ and $q: M/L \onto M/N$ be the natural surjections. Then \[
    N^e_M = (\ker (q \circ \pi))^e_M = (q \circ \pi)^{-1}(0^e_{M/N}) 
    = \pi^{-1}(q^{-1}(0^e_{M/N})) = \pi^{-1}((N/L)^e_{M/L}).
\]
\end{rem}
It is also worth noting that any residual pair operation is already extensive, since in the special case $N=L$ of the definition, we have \[L = \pi^{-1}(0)  \subseteq \pi^{-1}(p(L/L, M/L)) = p(L,M).\]

We note the following lemma relating restrictability to order-preservation.

\begin{lemma}\label{lem:restrict}
Let $p$ be a pair operation on $\cP$.  \begin{enumerate}
    \item If $p$ is restrictable, then it is order-preserving on ambient modules.
    \item If $p$ is order-preserving on both submodules and ambient modules and whenever $\cP$ contains $(L,N)$ and $(N,M)$, it also contains $(L,M)$, then $p$ is restrictable.
\end{enumerate}
Hence, if $p$ is order-preserving on submodules (e.g. a closure or interior operation) and whenever $\cP$ contains $(L,N)$ and $(N,M)$, it also contains $(L,M)$, then restrictability is equivalent to order-preservation on ambient modules.
\end{lemma}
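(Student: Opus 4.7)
The plan is to prove (1) and (2) separately, after which the ``Hence'' clause drops out by combining them.

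For (1), assume $p$ is restrictable and let $L \subseteq N \subseteq M$ with $(L,N), (L,M) \in \cP$; the goal is $p(L,N) \subseteq p(L,M)$. I will simply apply the definition of restrictability with $N$ itself (as a submodule of $M$) playing the role of the second cutting submodule. Since $L \subseteq N$, the intersection $L \cap N$ equals $L$, so the hypothesis $(L \cap N, N) \in \cP$ of restrictability reduces to the given $(L, N) \in \cP$. Restrictability then yields $p(L, N) = p(L \cap N, N) \subseteq p(L, M)$, which is exactly order-preservation on ambient modules.

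For (2), assume $p$ is order-preserving on both submodules and ambient modules, and that $\cP$ satisfies the chain condition. Given submodules $L, N$ of $M$ with $(L \cap N, N), (L, M) \in \cP$, my plan is to route through the intermediate pair $(L \cap N, M)$ via the two-step inclusion
\[ p(L \cap N, N) \subseteq p(L \cap N, M) \subseteq p(L, M). \]
The first inclusion uses order-preservation on ambient modules applied to the chain $L \cap N \subseteq N \subseteq M$, and the second uses order-preservation on submodules applied to the chain $L \cap N \subseteq L \subseteq M$. To invoke these, I need $(L \cap N, M) \in \cP$, which I will obtain from the chain condition applied to $(L \cap N, N)$ and $(N, M)$.

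The ``Hence'' clause is then immediate: (1) gives restrictable $\Rightarrow$ order-preserving on ambient, while under the standing hypotheses (2) gives the converse, so the two properties coincide. The main place to be careful is in (2): the chain condition produces $(L \cap N, M) \in \cP$ only when $(N, M) \in \cP$ is also available. This is automatic in the standard setting $\cP = \cP_{\cM}$ for a class $\cM$ closed under submodules and quotients, where every submodule pair is in $\cP$, but in a strict reading of the hypotheses one should note that $(N, M) \in \cP$ is tacitly needed for the chain-condition step to fire.
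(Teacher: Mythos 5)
Your proof is correct and takes essentially the same route as the paper's: for (1) you specialize restrictability with $N$ itself as the cutting submodule (using $L \cap N = L$), and for (2) you factor through $p(L\cap N, M)$ via the two inclusions, exactly as the paper does. You have also correctly flagged a subtlety that the paper's own proof glosses over: to derive $(L\cap N, M) \in \cP$ from the chain condition one must already have $(N, M) \in \cP$, which is not among the stated hypotheses of (2), though it is automatic in the standard setting $\cP = \cP_\cM$ for a class $\cM$ closed under taking submodules.
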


\begin{proof}
First suppose $p$ is restrictable.  Let $L \subseteq N \subseteq M$ such that $(L,N)$ and  $(L,M)$ are in $\cP$.  Then $p(L,N) = p(L \cap N, N) \subseteq p(L,M)$.

Conversely, suppose $p$ is order-preserving on both submodules and ambient modules.  Let $L,N$ be submodules of $M$ such that $(L\cap N, N)$ and $(L,M)$ are in  $\cP$.  By our hypothesis on $\cP$, $(L \cap N, M) \in \cP$. Then 
\begin{align*}
p(L \cap N, N) &\subseteq p(L\cap N, M) \text{ (by order-preservation on ambient modules)}\\ 
&\subseteq p(L,M) \text{ (by order-preservation on submodules).}
\end{align*}
\end{proof}

The following example illustrates why the converse of Lemma~\ref{lem:restrict}{\it (1)} does not hold:
\begin{example}\label{ex:nonrestricablepair}
Let $R=k[[x,y]]$.  Recall the Ratliff-Rush operation on the ideals of $R$:
\[
I_R^\RaR:=\bigcup\limits_{n=1}^{\infty} (I^{n+1}:_R I^n)
\]
first considered by Ratliff and Rush in \cite{RRoperationintro}.
Many, including Ratliff and Rush \cite{RRdelta}, have referred to this operation as the Ratliff-Rush closure; however, we point out that it is not order-preserving (on submodules) and thus not truly a closure operation.

Let $\cM=\{I \subseteq R\mid I \text{ an ideal}\}$ and $\cP=\{(I,J) \mid I \subseteq J \text{ ideals in } \cM\}$.
We define a pair operation $\alpha$ on $\cP$ via $\alpha(I,J)=I^{\RaR}_R\cap J$.  
Note that we also have
\[\alpha(I,J)=I_J^{\RaR}= \bigcup\limits_{n=1}^{\infty} (I^{n+1}:_J I^n).\]

Let $I=(x^3,y^3)$ and  $J=(x^4,xy,y^4)$ and note that $I \cap J=(x^4,x^3y,xy^3,y^4)$.
Following work of Ratliff and Rush \cite[Section 2]{RRdelta}, we see that
\[ (I \cap J)^{\RaR}_R = (I \cap J)_J^{\RaR}=(x^4,x^3y,x^2y^2,xy^3,y^4), \quad I_R^{\RaR}=I,\] 
\[\text{and } (x^4,x^3y,x^2y^2,xy^3,y^4) \nsubseteq I.\]
Hence $\alpha$ is a pair operation that is order-preserving on ambient modules (by definition), but neither restrictable nor order-preserving on submodules.
\end{example}

In previous papers \cite{nmeRG-cidual,ERGV-chdual}, the authors primarily studied residual closure operations. In this paper, we extend our work to nonresidual closures, necessitating some changes and additions to the definitions of those papers.
In extending the duality of \cite{nmeRG-cidual} to nonresidual closures, we will see that the appropriate dual of a not necessarily residual closure is a relative interior as defined in Definition~\ref{def:pairops}.

We recall the definition of the dual of a residual closure operation from \cite{nmeRG-cidual} to give context to our definition of a more general pair duality in Section \ref{sec:nonresidualdual}.

For the following several definitions and results, $R$ is a complete Noetherian local ring with maximal ideal $\m$, residue field $k$, and $E := E_R(k)$ the injective hull. We will use $^\vee$ to denote the Matlis duality operation, $\Hom_R(\_,E)$. $\cM$ is a category of $R$-modules closed under  taking submodules and quotient modules, and such that for all $M \in \cM$, $M^{\vee\vee} \cong M$. 
For example, $\cM$ could be the category of \fg\ $R$-modules, or of Artinian $R$-modules.

\begin{defn}[{\cite[Definition 3.1]{nmeRG-cidual}}]\label{def:oldduality}
Let $R$ be a complete Noetherian local ring.
Let $S(\cM)$ denote the set of all submodule selectors on $\cM$.  Define $\dual: S(\cM) \rightarrow S(\cM^\vee)$ as follows: For $\alpha \in S(\cM)$ and $M\in \cM^\vee$, \[
\alpha^\dual(M) := (M^\vee / \alpha(M^\vee))^\vee,
\]
considered as a submodule of $M$ in the usual way.  
\end{defn}

\begin{thm}
\label{thm:smsdual}
Let $R$ be a complete Noetherian local ring and
 $\alpha$ a submodule selector on $\cM$.  Then: \begin{enumerate}
 \item\label{it:duality} \cite[Theorem 3.2]{nmeRG-cidual} $(\alpha^\dual)^\dual = \alpha$,
 \item \cite[Theorem 4.3]{nmeRG-cidual} If $\alpha(M):=0_M^\cl$ for a residual closure operation $\cl$, then $\alpha^\dual$ is an interior operation. Conversely, if $\alpha$ is an interior operation, then $\alpha^\dual(M)=0_M^\cl$ for a uniquely determined residual closure operation $\cl$.
\end{enumerate}
\end{thm}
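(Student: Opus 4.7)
For part (\ref{it:duality}), I plan to unwind the definition of $\dual$ twice. We have $(\alpha^\dual)^\dual(M) = (M^\vee / \alpha^\dual(M^\vee))^\vee$, and applying the definition again together with Matlis reflexivity $M^{\vee\vee} \cong M$ identifies $\alpha^\dual(M^\vee)$ with $(M/\alpha(M))^\vee$. Matlis-dualizing the short exact sequence $0 \to \alpha(M) \to M \to M/\alpha(M) \to 0$ yields $0 \to (M/\alpha(M))^\vee \to M^\vee \to \alpha(M)^\vee \to 0$, so $M^\vee / \alpha^\dual(M^\vee) \cong \alpha(M)^\vee$; a final dualization identifies $(\alpha^\dual)^\dual(M)$ with $\alpha(M)$.

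For the forward direction of (2), assume $\alpha(M) = 0_M^\cl$ for a residual closure $\cl$. To show $\alpha^\dual$ is order-preserving, I would take an inclusion $i \colon L \hookrightarrow M$ and Matlis-dualize to the surjection $i^\vee \colon M^\vee \onto L^\vee$ with kernel $K$. The residual property of $\cl$ together with its order-preservation on submodules gives $0_{M^\vee}^\cl \subseteq K_{M^\vee}^\cl = (i^\vee)^{-1}(0_{L^\vee}^\cl)$, so $i^\vee$ carries $\alpha(M^\vee)$ into $\alpha(L^\vee)$ and induces a surjection $M^\vee / \alpha(M^\vee) \onto L^\vee / \alpha(L^\vee)$ whose Matlis dual is the required inclusion $\alpha^\dual(L) \subseteq \alpha^\dual(M)$. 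For idempotence, put $N := \alpha^\dual(M)$; then $N^\vee \cong M^\vee / 0_{M^\vee}^\cl$, and applying residuality to $(0_{M^\vee}^\cl)_{M^\vee}^\cl = 0_{M^\vee}^\cl$ forces $0_{N^\vee}^\cl = 0$, so $\alpha^\dual(N) = N^{\vee\vee} = N$.

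For the converse, given an absolute interior operation $\alpha$ on $\cM^\vee$, set $\beta := \alpha^\dual$ and define $\cl$ on pairs $(N, M)$ by $N_M^\cl := \pi_N^{-1}(\beta(M/N))$, where $\pi_N \colon M \onto M/N$ is the natural quotient. This $\cl$ is residual by construction and satisfies $0_M^\cl = \beta(M)$; any other residual closure with the same behavior on zero submodules must equal $\cl$ by residuality, yielding uniqueness. Extensiveness is automatic since $N = \pi_N^{-1}(0) \subseteq \pi_N^{-1}(\beta(M/N))$. Order-preservation reduces, via the same duality dictionary as above (now applied to a quotient map $M/N \onto M/L$ for $N \subseteq L \subseteq M$), to the order-preservation of $\alpha$. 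Idempotence reduces to proving $\beta(P / \beta(P)) = 0$ for every $P$: Matlis-dualizing $\beta(P) \hookrightarrow P$ identifies $(P/\beta(P))^\vee$ with $\alpha(P^\vee)$, whence idempotence of $\alpha$ gives $\alpha((P/\beta(P))^\vee) = \alpha(\alpha(P^\vee)) = \alpha(P^\vee) = (P/\beta(P))^\vee$, and the relevant quotient vanishes.

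The hard part will be juggling the two directions of duality at once: residuality governs how $\cl$ interacts with surjections, while order-preservation and idempotence of $\alpha^\dual$ are phrased in terms of inclusions, and Matlis duality interchanges these two pictures. Once one internalizes the dictionary (inclusion $\leftrightarrow$ surjection, residual closure of a kernel $\leftrightarrow$ $\alpha$ applied to the dualized quotient), each verification becomes a short diagram chase, and both idempotence checks collapse to the idempotence of the operation on the opposite side of the duality.
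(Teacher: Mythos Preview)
The paper does not give its own proof of this theorem; both parts are simply cited from \cite{nmeRG-cidual} (Theorems~3.2 and~4.3 there) and restated here as background. Your proof sketch is correct and is essentially the natural argument one would expect to find in the cited reference: part~(\ref{it:duality}) is the straightforward double-unwinding using Matlis reflexivity and the dualized short exact sequence, and part~(2) translates each axiom (order-preservation, idempotence) across the duality via the inclusion $\leftrightarrow$ surjection dictionary, with residuality of $\cl$ doing the work on the closure side. One small slip: in the converse you say ``an absolute interior operation $\alpha$ on $\cM^\vee$'', but in the statement $\alpha$ lives on $\cM$, so that $\beta=\alpha^\dual$ and the closure $\cl$ live on $\cM^\vee$; this does not affect the argument.
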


\begin{rem}
In consequence, if $\cl$ is a residual closure operation on $\cM$, its dual interior operation can be expressed as
\[\int(M)=\left(\frac{M^\vee}{0_{M^\vee}^{\cl}}\right)^\vee,\]
where $M \in \cM^\vee$.

 We note for clarity that in the terminology of the current paper, the interior operations given in Theorem~\ref{thm:smsdual} above are \emph{absolute}.
\end{rem}

Next we recall the definitions of the $\cl$-core and $\int$-hull, which we will study for several closure and interior operations.

\begin{defn}[compare {\cite[Section 2]{ERGV-chdual} or \cite{FoVa-core}}]
Let $R$ be an associative ring, not necessarily commutative, and $\cl$ a closure operation defined on a class $\cP$ of pairs of (left) $R$-modules. 
Let $(N,M) \in \cP$.  We say that $L \subseteq N$ is a \emph{$\cl$-reduction of $N$ in $M$} if $(L,M) \in \cP$ and $L^{\cl}_M=N^{\cl}_M$.

Note that if $(L,M),(N,M) \in \cP$, $L \subseteq N \subseteq L_M^\cl$ if and only if $L$ is a $\cl$-reduction of $N$ in $M$.

 If $(N,M) \in \cP$, the \emph{$\cl$-core of $N$ with respect to $M$} is the intersection of all $\cl$-reductions of $N$ in $M$, or
\[\cl\core_M(N):= 
\bigcap \{L \mid L \subseteq N \subseteq L_M^{\cl} \text{ and } (L,M) \in \cP\} .\]
As convention, when taking the $\cl$-core of an ideal $I$ in $R$, we will denote $\cl\core_R(I)$ by $\cl\core(I)$.
\end{defn}

\begin{defn}[{\cite[Definition 1.2]{nme-sp}}]\label{def:Nakcl}
Let $(R,\m)$ be a Noetherian local ring and $\cl$ be a closure operation on the class of pairs $\cP$ of \fg\ $R$-modules.  We say that $\cl$ is a \emph{Nakayama closure} if for $L \subseteq N \subseteq M$ \fg\ $R$-modules, if $L\subseteq N \subseteq (L+\m N)^{\cl}_M$ then $L^{\cl}_M=N^{\cl}_M$.
\end{defn}

Nakayama closures are the class of closures where we know that minimal $\cl$-reductions exist. The result below was stated for ideals in \cite[Lemma 2.2]{nme*spread}, and \cite[Section 1]{nme-sp} clarified that the result holds for modules. We repeat the version stated as \cite[Proposition 2.9]{ERGV-chdual}:

\begin{prop}\label{NR Ext}
Let $(R, \m)$ be a Noetherian local ring and $\cl$ a Nakayama closure operation on the class of \fg\  modules, or a closure operation on the class of  Artinian $R$-modules $\cM$. Let $N \subseteq M$ be elements of $\cM$. For any $\cl$-reduction $L$ of $N$ in $M$, there exists a minimal $\cl$-reduction $K$ of $N$ in $M$ such that $K \subseteq L$. Moreover, in the former case,  any minimal generating set of $K$ extends to one of $L$.
\end{prop}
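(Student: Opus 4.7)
I will handle the two cases separately. The Artinian case is straightforward: the family of $\cl$-reductions of $N$ in $M$ contained in $L$ is nonempty, and every member has finite length, so picking $K$ of minimum length yields a minimal $\cl$-reduction---any strictly smaller $\cl$-reduction would have strictly smaller length, contradicting this choice.

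For the Nakayama closure case on finitely generated modules, I plan to induct on $\mu(L)$, the minimal number of generators of $L$. The base $\mu(L) = 0$ forces $L = 0$, which is then its own minimal $\cl$-reduction with vacuous extension clause. For the inductive step, fix a minimal generating set $\ell_1, \ldots, \ell_n$ of $L$; if $L$ is already a minimal $\cl$-reduction of $N$, take $K := L$. Otherwise, choose a $\cl$-reduction $L' \subsetneq L$ of $N$. By Nakayama's lemma applied to $L' \subsetneq L$, the image $\overline{L'} := (L' + \m L)/\m L$ is a proper subspace of $L/\m L$, say of dimension $d < n$. Lift a basis of $\overline{L'}$ to elements $l_1, \ldots, l_d \in L'$ and set $\tilde L := (l_1, \ldots, l_d)$.

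The key claim is that $\tilde L$ is a $\cl$-reduction of $N$ with $\mu(\tilde L) \leq d < n$. By the choice of lifts, $\tilde L + \m L = L' + \m L$ as submodules of $L$, so by extensivity and order-preservation on submodules,
\[
L \ \subseteq \ L_M^{\cl} = (L')_M^{\cl} \ \subseteq \ (L' + \m L)_M^{\cl} = (\tilde L + \m L)_M^{\cl}.
\]
The Nakayama closure hypothesis applied to the pair $\tilde L \subseteq L$ then yields $\tilde L_M^{\cl} = L_M^{\cl} = N_M^{\cl}$, so $\tilde L$ is a $\cl$-reduction of $N$. Applying the inductive hypothesis to $\tilde L$ produces a minimal $\cl$-reduction $K \subseteq \tilde L \subseteq L$ whose minimal generating sets extend to minimal generating sets of $\tilde L$.

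The extension clause for $K \subseteq L$ now follows by composing extensions: because $\overline{l_1}, \ldots, \overline{l_d}$ are linearly independent in $L/\m L$ (being a basis of $\overline{L'}$), the natural map $\tilde L / \m \tilde L \hookrightarrow L/\m L$ is injective, so any minimal generating set of $\tilde L$ extends to one of $L$ by adjoining lifts of any complementary basis of $L/\m L$. Chaining this with the extension property for $K \hookrightarrow \tilde L$ provided by induction completes the argument. I expect the main obstacle to be the reduction claim for $\tilde L$: everything rests on translating the linear-algebraic identity $\tilde L + \m L = L' + \m L$ into the closure-theoretic containment $L \subseteq (\tilde L + \m L)_M^{\cl}$ needed to invoke the Nakayama closure hypothesis, and getting this step right is the technical crux of the proof.
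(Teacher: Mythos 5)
Your treatment of the Nakayama/finitely generated case is correct and matches the standard argument from the cited source: the induction on $\mu(L)$, the use of the Nakayama closure hypothesis to show that the trimmed module $\tilde L$ (generated by lifts to $L'$ of a basis of $\overline{L'} = (L'+\m L)/\m L$) remains a $\cl$-reduction of $N$, and the linear-algebra verification that $\tilde L/\m\tilde L \hookrightarrow L/\m L$ is injective (so that minimal generating sets of $\tilde L$ extend to minimal generating sets of $L$, and hence, by chaining with the inductive extension property for $K \subseteq \tilde L$, so do those of $K$) are all sound. In particular, the chain $L \subseteq L^{\cl}_M = (L')^{\cl}_M \subseteq (L'+\m L)^{\cl}_M = (\tilde L + \m L)^{\cl}_M$ is exactly the right way to feed the Nakayama hypothesis, and you correctly note this is the crux.

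The Artinian case, however, has a genuine gap. You claim that every $\cl$-reduction of $N$ contained in $L$ has finite length, and then pick one of minimum length. But submodules of an Artinian module over a Noetherian local ring need not have finite length: for example, $E_R(k)$ is Artinian of infinite length whenever $\dim R \geq 1$, and more to the point $L$ itself (which is always a $\cl$-reduction of $N$ contained in $L$, and thus always in your family) can have infinite length. So ``picking $K$ of minimum length'' is not well-defined in general. The correct and simpler argument is to invoke the descending chain condition directly: the family $\mathcal{F}$ of all $\cl$-reductions of $N$ in $M$ that are contained in $L$ is a nonempty collection of submodules of the Artinian module $L$, hence has an element $K$ that is minimal with respect to inclusion. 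Any such $K$ is automatically a minimal $\cl$-reduction of $N$ in $M$, since a strictly smaller $\cl$-reduction $K' \subsetneq K$ would again lie in $\mathcal{F}$, contradicting minimality. Replacing your finite-length argument with this DCC argument repairs the Artinian half of the proof.
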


The dual notion to the $\cl$-core is the $\int$-hull, here only defined for absolute interiors. See Section~\ref{sec:nonresidualcorehullduality}  for a generalization to relative interiors.

\begin{defn}[{\cite[Section 6]{ERGV-chdual}}]\label{def:absintexp}
Let $R$ be an associative (not necessarily commutative) ring and $A \subseteq B$ (left) $R$-modules.  Let $\int$ be an absolute interior operation on a class of (left) $R$-modules $\cM$ that operates on at least the submodules of $B$ that contain $A$.  
We say $C$ with $A\subseteq C \subseteq B$ is an \emph{$\int$-expansion of $A$ in $B$} if $\int(A)=\int(C)$.

Let $R$ be an associative (not necessarily commutative) ring and $\int$ an absolute interior operation defined on a class of (left) $R$-modules $\cM$ closed under taking submodules. 

If $A \subseteq B$ are elements of $\cM$, the \emph{$\int$-hull of a submodule $A$ with respect to $B$} is the sum of all $\int$-expansions of $A$ in $B$, or
 \[
\int\hull^B(A):=\sum_{\int(C) \subseteq A \subseteq C \subseteq B} C.
\]
\end{defn}

In this paper, we will deal with relative interior operations. The above definitions are the same for relative interiors, with $\int(A)$ replaced with $\int(A,B)$ if $B \supseteq A$ is the ambient module. We will discuss a relative version of a Nakayama interior in Section \ref{sec:nonresidualdual}, and relative versions of cores and hulls in Section \ref{sec:nonresidualcorehullduality}.

\section{Dual of a nonresidual closure operation}
\label{sec:nonresidualdual}

In this section we extend the work of \cite{nmeRG-cidual} to closure operations that are not necessarily residual, so that we can apply the results of that paper to integral closure and basically full closure. We describe the closure-interior duality in this case and prove that the dual of a Nakayama closure is a Nakayama interior, with a new relative version of the definition of a Nakayama interior.

We start by defining the dual of an arbitrary pair operation.

\begin{defn}
\label{def:nonresidualdual}
Let $R$ be a complete local ring. Let $p$ be a pair operation on a class of pairs of Matlis-dualizable $R$-modules $\cP$ as in Definition \ref{def:pairops}. Set $\cP^\vee := \{(A,B) \mid ((B/A)^\vee, B^\vee) \in \cP\}$, and define the dual of $p$ by 
\[
p^\dual(A,B):=\left(\frac{B^\vee}{p\left(\left(\frac{B}{A}\right)^\vee,B^\vee\right) } \right)^\vee.
\]
It's an easy exercise to show that $(\cP^\vee)^\vee=\cP$.
\end{defn}

We show that when applied to a residual closure operation, this definition agrees with the definition of closure-interior duality from \cite[Definition 3.1]{nmeRG-cidual}:

\begin{prop}
\label{pr:residualcasedualthesame}
Let $R$ be a complete local ring. Let $\cl$ be a residual closure operation on pairs of modules $L \subseteq M$ in a class $\cM$ that is closed under taking submodules and quotient modules and set $\alpha(M):=0_M^\cl$ for any $M \in \cM$. Then for any $A \subseteq B$ in $\cM^\vee$, $\alpha^\dual(A)=A^B_{\cl^\dual}$, 
where $\alpha^\dual$ is defined as in Definition~\ref{def:oldduality} and $\cl^\dual$ is as in Definition~\ref{def:nonresidualdual}.
\end{prop}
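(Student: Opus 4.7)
The plan is to unfold both definitions of the dual and then reduce the new formula to the old one using residuality of $\cl$ together with the exactness of Matlis duality.

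First I would write out the right-hand side using Definition~\ref{def:nonresidualdual}:
\[
\cl^\dual(A,B) \;=\; \left( B^\vee \big/ \cl\bigl((B/A)^\vee,\, B^\vee \bigr) \right)^{\!\vee}.
\]
Here $(B/A)^\vee$ sits in $B^\vee$ as the kernel of the restriction map $B^\vee \onto A^\vee$ coming from $A \hookrightarrow B$. Since $E$ is injective, the short exact sequence $0 \to A \to B \to B/A \to 0$ dualizes to the short exact sequence $0 \to (B/A)^\vee \to B^\vee \xrightarrow{\pi} A^\vee \to 0$, which identifies $B^\vee/(B/A)^\vee$ canonically with $A^\vee$.

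Next I would invoke the hypothesis that $\cl$ is residual. Taking $L = (B/A)^\vee$ and $M = B^\vee$ in the definition of residuality, and using the identification above for $M/L$, gives
\[
\cl\bigl((B/A)^\vee, B^\vee\bigr) \;=\; \pi^{-1}\!\bigl(0^{\cl}_{A^\vee}\bigr) \;=\; \pi^{-1}\!\bigl(\alpha(A^\vee)\bigr).
\]
Then the third isomorphism theorem collapses the outer quotient:
\[
B^\vee \big/ \pi^{-1}\!\bigl(\alpha(A^\vee)\bigr) \;\cong\; \bigl(B^\vee/(B/A)^\vee\bigr)\big/\alpha(A^\vee) \;\cong\; A^\vee/\alpha(A^\vee).
\]
Dualizing once more and comparing with Definition~\ref{def:oldduality} yields
\[
\cl^\dual(A,B) \;\cong\; \bigl(A^\vee/\alpha(A^\vee)\bigr)^{\!\vee} \;=\; \alpha^\dual(A),
\]
which is the desired equality.

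The only real obstacle I anticipate is bookkeeping the canonical identifications: one must verify that the submodule of $B^{\vee\vee} \cong B$ obtained after the final dualization really does land inside $A \subseteq B$ (equivalently, that the identification $B^\vee/(B/A)^\vee \cong A^\vee$ used above is the same canonical one induced by Matlis duality, so that the embedding $(A^\vee/\alpha(A^\vee))^\vee \hookrightarrow A^{\vee\vee}$ agrees with its embedding into $B^{\vee\vee}$). Once that is checked, residuality does all the work and the two definitions of the dual coincide.
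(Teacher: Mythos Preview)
Your proposal is correct and follows essentially the same approach as the paper: both arguments use residuality to identify $\cl((B/A)^\vee, B^\vee)$ with $\pi^{-1}(0^{\cl}_{A^\vee})$ (the paper phrases it as $0^{\cl}_{A^\vee} = \pi(((B/A)^\vee)^{\cl}_{B^\vee})$ and notes $\ker \pi$ is contained in the closure), then apply the third isomorphism theorem to collapse $B^\vee/\cl((B/A)^\vee,B^\vee)$ to $A^\vee/\alpha(A^\vee)$. The only cosmetic difference is direction---the paper begins with $\alpha^\dual(A)$ and arrives at $A^B_{\cl^\dual}$, while you do the reverse---and your remark about tracking the canonical identifications is exactly the point the paper hides behind the phrase ``in the usual way'' and the $\cong$ in its final line.
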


\begin{proof}
By definition,
\[\alpha^\dual(A)=\left(\frac{A^\vee}{0_{A^\vee}^\cl} \right)^\vee.\]
Let $\pi:B^\vee \to A^\vee$ be the natural surjection. Then
\[0_{A^\vee}^\cl=\pi\left(\left((B/A)^\vee\right)_{B^\vee}^\cl \right).\]
Set 
\[C=\left((B/A)^\vee \right)_{B^\vee}^\cl.\]
Notice that $\ker(\pi)=(B/A)^\vee \subseteq C$.
Hence
\[\bar{\pi}:B^\vee/C \to A^\vee/\pi(C)\]
is an isomorphism. 
This implies that
\[\alpha^\dual(A)=\left(A^\vee/\pi(C) \right)^\vee \cong \left(B^\vee/C \right)^\vee=A^B_{\cl^\dual}. \qedhere\]
\end{proof}

The following lemmas will be useful in proving the properties of this duality.

\begin{lemma}
\label{lem:duality}
Let $R$ be a complete local ring and $p$ be  a pair operation on a class of pairs of $R$-modules $\cP$. For any $(A,B) \in \cP$,
\[\left(\frac{B}{p(A,B)}\right)^\vee=p^\dual(\left(B/A\right)^\vee,B^\vee).\]
In particular, if $\cl$ is a closure operation then $((B/A)^\vee)_{\cl^\dual}^{B^\vee} = (B / A^\cl_B)^\vee$, and if $\int$ is an interior operation, then $((B/A)^\vee)^{\int^\dual}_{B^\vee} = (B / A_{\int}^B)^\vee$.
\end{lemma}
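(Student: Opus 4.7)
The plan is to unfold the definition of $p^\dual((B/A)^\vee, B^\vee)$ using Definition~\ref{def:nonresidualdual} directly, and then match the resulting expression to $(B/p(A,B))^\vee$ via the standard Matlis duality identifications. Applying the definition verbatim with $A' = (B/A)^\vee$ and $B' = B^\vee$ yields
\[
p^\dual\bigl((B/A)^\vee,\, B^\vee\bigr) \;=\; \left(\frac{(B^\vee)^\vee}{\,p\!\left(\bigl(B^\vee/(B/A)^\vee\bigr)^\vee,\, (B^\vee)^\vee\right)}\right)^{\!\vee}.
\]
So the proof is just a matter of rewriting the inner pair.

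Next, I would invoke two facts from Matlis duality. First, biduality: $(B^\vee)^\vee \cong B$ canonically on $\cM$. Second, dualizing the short exact sequence $0 \to A \to B \to B/A \to 0$ via the exact functor $\_^\vee = \Hom_R(\_, E)$ gives an exact sequence $0 \to (B/A)^\vee \to B^\vee \to A^\vee \to 0$, whence $B^\vee/(B/A)^\vee \cong A^\vee$ and therefore $(B^\vee/(B/A)^\vee)^\vee \cong (A^\vee)^\vee \cong A$. These isomorphisms are compatible with the inclusions, so under biduality the pair $\bigl((B^\vee/(B/A)^\vee)^\vee,\, (B^\vee)^\vee\bigr)$ is identified with $(A,B)$. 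This identification also verifies that $((B/A)^\vee, B^\vee) \in \cP^\vee$ whenever $(A,B) \in \cP$, the observation underlying $(\cP^\vee)^\vee = \cP$ noted in Definition~\ref{def:nonresidualdual}.

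The defining property of a pair operation is exactly that $p$ commutes with isomorphisms of pairs, so
\[
p\!\left(\bigl(B^\vee/(B/A)^\vee\bigr)^\vee,\, (B^\vee)^\vee\right) \;\cong\; p(A,B)
\]
as submodules of $(B^\vee)^\vee \cong B$. Substituting into the displayed formula above yields $p^\dual((B/A)^\vee, B^\vee) = (B/p(A,B))^\vee$, which is the main claim. The two ``in particular'' assertions are then just a change of notation: for a closure $\cl$ we have $\cl(A,B) = A^\cl_B$, and for an interior $\int$ we have $\int(A,B) = A^B_\int$, so plugging these into the main identity gives the displayed formulas.

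There is no substantive obstacle; the entire content is bookkeeping with Matlis duality. The only point that deserves a line of care is the naturality (isomorphism-invariance) of $p$, which is why the expression $p((B^\vee/(B/A)^\vee)^\vee, (B^\vee)^\vee)$, computed a priori inside $(B^\vee)^\vee$, can legitimately be identified with $p(A,B) \subseteq B$ when we pass to the quotient and dualize.
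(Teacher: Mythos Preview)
Your proof is correct and follows the same approach as the paper: unfold Definition~\ref{def:nonresidualdual} with the pair $((B/A)^\vee, B^\vee)$ and simplify via the Matlis duality identifications $(B^\vee)^\vee \cong B$ and $(B^\vee/(B/A)^\vee)^\vee \cong A$. The paper's own proof is a single line (``By definition''), leaving these identifications implicit; you have simply spelled them out, including the appeal to isomorphism-invariance of $p$, which is exactly the hidden content of that one-liner.
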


\begin{proof}
By definition, 
\[p^\dual((B/A)^\vee,B^\vee)=\left(\frac{B}{p(A,B)}\right)^\vee.\qedhere\] 
\end{proof}

\begin{lemma}[{\cite[Lemma 6.15]{ERGV-chdual}}]
\label{lemma:dualofintersectionissum}
Let $R$ be a complete Noetherian local ring.  
Let $B$ be an $R$-module such that it and all of its quotient modules are Matlis-dualizable.  Let $\{C_i\}_{i \in I}$ a collection of submodules of $B$. Then
\[ \left(\frac{B}{\sum_i C_i}\right)^\vee \cong \bigcap_i (B/C_i)^\vee\] and 
\[ \left(\frac{B}{\bigcap_i C_i}\right)^\vee \cong \sum_i (B/C_i)^\vee,\]
where all the dualized modules are considered as submodules of $B^\vee$.
\end{lemma}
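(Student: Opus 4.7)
The plan is to prove the two isomorphisms separately: the first by a direct identification of $(B/C)^\vee$ as the annihilator submodule of $B^\vee$, and the second by dualizing the first and invoking Matlis reflexivity.

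First, I would fix once and for all the embedding $(B/C)^\vee \hookrightarrow B^\vee$ coming from the short exact sequence $0 \to C \to B \to B/C \to 0$ and the left exactness of $\Hom_R(-,E)$: under this embedding, $(B/C)^\vee$ is identified with the submodule $\{f \in B^\vee : f|_C = 0\}$. Using this characterization, the first equation is essentially formal: for any collection $\{C_i\}$,
\[
\Bigl(\tfrac{B}{\sum_i C_i}\Bigr)^\vee = \{f \in B^\vee : f|_{\sum_i C_i} = 0\} = \bigcap_i\{f \in B^\vee : f|_{C_i} = 0\} = \bigcap_i (B/C_i)^\vee,
\]
where the middle equality is just the statement that a linear map vanishes on a sum iff it vanishes on each summand. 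This takes care of part one with no use of completeness.

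For the second equation, I would apply the first equation inside $B^\vee$ to the family of submodules $D_i := (B/C_i)^\vee \subseteq B^\vee$, obtaining
\[
\Bigl(\tfrac{B^\vee}{\sum_i D_i}\Bigr)^\vee = \bigcap_i (B^\vee / D_i)^\vee.
\]
Dualizing the short exact sequence $0 \to (B/C_i)^\vee \to B^\vee \to C_i^\vee \to 0$ and invoking Matlis reflexivity (together with the closure of our module class under submodules, so that each $C_i$ is reflexive) gives a canonical isomorphism $(B^\vee/D_i)^\vee \cong C_i^{\vee\vee} \cong C_i$, compatible with the inclusion into $B^{\vee\vee} \cong B$. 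Thus $\bigcap_i (B^\vee / D_i)^\vee \cong \bigcap_i C_i$ as submodules of $B$. Dualizing once more and using reflexivity of $B^\vee$ recovers $\sum_i D_i = (B/\bigcap_i C_i)^\vee$ as submodules of $B^\vee$.

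The routine parts are the annihilator description and the short exact sequence calculation. The main obstacle is making sure the Matlis duality isomorphisms $B^{\vee\vee} \cong B$, $C_i^{\vee\vee} \cong C_i$, and the identifications of $(B^\vee/D_i)^\vee$ with $C_i$ are \emph{compatible} as sub- and quotient modules, rather than merely as abstract modules, so that the resulting identity sits correctly as a submodule of $B^\vee$. This is handled by tracking naturality of the evaluation maps $M \to M^{\vee\vee}$ on the short exact sequences involved; the completeness of $R$ and the hypothesis that $B$ and all its quotients (hence, by passing to $B^\vee$, its submodules) are Matlis-dualizable are exactly what is needed to make this bookkeeping go through.
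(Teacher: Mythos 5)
Your proof is correct and is the natural argument: the annihilator description $(B/C)^\vee = \{f \in B^\vee : f|_C = 0\}$ makes the first isomorphism purely formal, and applying it inside $B^\vee$ to the family $D_i := (B/C_i)^\vee$, identifying $(B^\vee/D_i)^\vee$ with $C_i$ inside $B^{\vee\vee} \cong B$, and dualizing back via Matlis reflexivity yields the second. (The paper states the lemma only by citation to \cite[Lemma 6.15]{ERGV-chdual}, so there is no in-text proof here to compare against.) One small clarification in your justification: the stated hypothesis grants dualizability of $B$ and its \emph{quotients}, not its submodules, so ``closure of the module class under submodules'' is not quite the right appeal; instead, reflexivity of each $C_i$ follows from that of $B$ and $B/C_i$ by naturality of the evaluation map and the five lemma applied to the double-dualization of $0 \to C_i \to B \to B/C_i \to 0$, and the same device supplies the reflexivity you implicitly need for $B^\vee/\sum_i D_i$ in the final dualization step.
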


\begin{lemma}\label{lem:dualitylift}
Let $R$ be a ring, $U \subseteq C \subseteq B$ $R$-modules,  $j: C \hookrightarrow B$ the inclusion map, $E$ an injective $R$-module, and ${(-)}^\vee$ the exact contravariant functor $\Hom_R(-,E)$.  Let $\pi = j^\vee: B^\vee \onto C^\vee$.  Then when $(C/U)^\vee$ (resp. $(B/U)^\vee$) is considered as a submodule of $C^\vee$ (resp. $B^\vee$) in the usual way, we have $\pi^{-1}((C/U)^\vee) = (B/U)^\vee$.  
\end{lemma}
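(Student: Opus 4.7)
The plan is to identify each of the two submodules as the kernel of a restriction-to-$U$ map and then read off the equality from a short diagram chase, exploiting the fact that $E$ is injective so that $(-)^\vee$ is exact.

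First I would apply $(-)^\vee$ to the two short exact sequences
\[0 \to U \xrightarrow{i_C} C \xrightarrow{q_C} C/U \to 0 \quad \text{and} \quad 0 \to U \xrightarrow{i_B} B \xrightarrow{q_B} B/U \to 0,\]
noting that the inclusions are compatible in the sense that $i_B = j \circ i_C$. Because $E$ is injective, these yield short exact sequences
\[0 \to (C/U)^\vee \xrightarrow{q_C^\vee} C^\vee \xrightarrow{i_C^\vee} U^\vee \to 0,\]
and the analogous one for $B$. The ``usual'' embedding of $(C/U)^\vee$ into $C^\vee$ is precisely $q_C^\vee$, which sends $\varphi\colon C/U \to E$ to $\varphi \circ q_C$; hence $(C/U)^\vee$ is identified with $\ker(i_C^\vee)$, namely the set of $R$-linear maps $C \to E$ that vanish on $U$, and similarly $(B/U)^\vee$ is identified with $\ker(i_B^\vee)$.

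Second, since $i_B = j \circ i_C$, applying $(-)^\vee$ gives $i_B^\vee = i_C^\vee \circ \pi$. Therefore, for any $f \in B^\vee$,
\[\pi(f) \in (C/U)^\vee \iff i_C^\vee(\pi(f)) = 0 \iff i_B^\vee(f) = 0 \iff f \in (B/U)^\vee,\]
which is exactly the claimed equality $\pi^{-1}((C/U)^\vee) = (B/U)^\vee$.

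The argument is essentially a one-line diagram chase; the only genuine subtlety, and the place I would take care, is matching the informal identifications of $(C/U)^\vee$ and $(B/U)^\vee$ as submodules of their respective duals with the functorial descriptions as kernels of restriction maps. There is no real obstacle beyond this bookkeeping.
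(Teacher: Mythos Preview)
Your proof is correct and is essentially the same as the paper's, which compresses the argument to a single line: for $g \in B^\vee$, $g \in \pi^{-1}((C/U)^\vee) \iff (g|_C)|_U = 0 \iff g|_U = 0 \iff g \in (B/U)^\vee$. Your more explicit use of the exact sequences and the identity $i_B^\vee = i_C^\vee \circ \pi$ is just a formal unpacking of this same idea.
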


\begin{proof}
For any $g \in B^\vee$ we have: \[
g \in \pi^{-1}((C/U)^\vee) \iff (g|_C)|_U = 0 \iff g|_U = 0 \iff g \in (B/U)^\vee. \qedhere
\]
\end{proof}

We give a number of results showing how the duality of Definition \ref{def:nonresidualdual} interacts with properties of pair operations.

\begin{prop}
\label{pr:dualityprops}
Let $R$ be a complete local ring, 
 and let $p$ be a pair operation on a class $\cP$ of pairs of Matlis-dualizable $R$-modules as in Definition \ref{def:pairops}. We have the following:
\begin{enumerate}
    \item\label{it:bidual} $p^{\dual \dual} = p$.
    \item\label{it:extint} If $p$ is extensive, then $p^\dual$ is intensive.
    \item\label{it:intext} If $p$ is intensive, then $p^\dual$ is extensive.
    \item\label{it:opsub} $p$ is order-preserving on submodules if and only $p^\dual$ is.
    \item\label{it:idem} $p$ is idempotent if and only if $p^\dual$ is.
    \item\label{it:clint} If $p$ is a closure operation, then $p^\dual$ is an interior operation.
    \item\label{it:intcl} If $p$ is an interior operation, then $p^\dual$ is a closure operation.
    \item\label{it:ressurfunc} $p$ is restrictable if and only if $p^\dual$ is surjection-functorial.
    \item\label{it:func} Assume $p$ is order-preserving on submodules.  Then $p$ is functorial if and only if $p^\dual$ is.
\end{enumerate}
\end{prop}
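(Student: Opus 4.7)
The strategy is to translate each property through the key identity of Lemma~\ref{lem:duality}, namely $(B/p(A,B))^\vee = p^\dual((B/A)^\vee, B^\vee)$, exploiting that Matlis duality is exact, contravariant, and reverses inclusions. In particular, submodules of $B$ correspond to quotients of $B^\vee$ and vice versa, and a containment in $B$ holds if and only if the dualized containment in $B^\vee$ does.

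I would handle (1) first by applying Lemma~\ref{lem:duality} twice and invoking $M \cong M^{\vee\vee}$. Items (2) and (3) follow by applying extensivity (resp. intensivity) of $p$ to the pair $((B/A)^\vee, B^\vee)$ and dualizing the resulting inclusion through $(B^\vee / -)^\vee$. Item (4) reduces to the observation that for $A \subseteq A' \subseteq B$ one has $(B/A')^\vee \subseteq (B/A)^\vee$ in $B^\vee$, so order-preservation on submodules of $p$ propagates through the duality. Item (5) is a direct nesting argument through Lemma~\ref{lem:duality}. Items (6) and (7) are then immediate corollaries of (2)--(5), since a closure (resp. interior) is precisely an extensive (resp. intensive), order-preserving-on-submodules, idempotent pair operation.

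The substantive content lies in (8). Given $(A,B) \in \cP^\vee$ and $U \subseteq B$, I would set $M := B^\vee$, $L := (B/A)^\vee$, and $N := (B/U)^\vee$ as submodules of $M$, and use Lemma~\ref{lemma:dualofintersectionissum} to identify $L \cap N = (B/(A+U))^\vee$. Restrictability of $p$ applied to $L$ and $N$ inside $M$ then yields $p(L \cap N, N) \subseteq p(L,M) \cap N$. Dualizing this inclusion via Lemma~\ref{lem:duality}, together with Lemma~\ref{lem:dualitylift} to transfer the ``$\cap N$'' step to the quotient $B/U$ and to identify the relevant dualized containments, produces exactly the equivalent form $(p^\dual(A,B)+U)/U \subseteq p^\dual((A+U)/U, B/U)$ of surjection-functoriality for $p^\dual$. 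The converse direction is the same argument applied to $p^\dual$ and then invoking (1).

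For (9), I would factor $g: M \to M'$ through its epi-mono factorization $M \onto g(M) \hookrightarrow M'$. Under the standing assumption of order-preservation on submodules, which is self-dual by (4), functoriality of $p$ becomes the conjunction of surjection-functoriality and order-preservation on ambient modules; by Lemma~\ref{lem:restrict}, the latter is in turn equivalent to restrictability. Applying (8) (and its converse via (1)) swaps restrictability with surjection-functoriality under duality, so the two ingredients of functoriality swap places, showing functoriality is self-dual. The main obstacle I anticipate is (8): carefully pinning down the identification $(B/A)^\vee \cap (B/U)^\vee = (B/(A+U))^\vee$ inside $B^\vee$ and then verifying, term-by-term via Lemma~\ref{lem:duality}, that each side of the restrictability inclusion dualizes to the corresponding side of the surjection-functoriality inclusion. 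Once this dictionary is in place, (9) becomes a formal consequence of (8), (4), and Lemma~\ref{lem:restrict}, and the remaining items reduce to direct applications of Lemma~\ref{lem:duality}.
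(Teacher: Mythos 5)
Your structure mirrors the paper's almost exactly: use Lemma~\ref{lem:duality} to translate each property through $(B/p(A,B))^\vee = p^\dual((B/A)^\vee,B^\vee)$, handle (1)--(5) by dualizing inclusions, deduce (6)--(7), prove (8) by identifying $(B/A)^\vee \cap (B/U)^\vee = (B/(A+U))^\vee$ via Lemma~\ref{lemma:dualofintersectionissum}, and prove (9) via the epi-mono decomposition together with Lemma~\ref{lem:restrict}.

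However, there is a genuine logical gap in your treatment of the converse of (8). For items (4) and (5), the one-direction-plus-(1) shortcut works because the statement has the shape ``$p$ has property $X$ iff $p^\dual$ has property $X$,'' which is self-dual. Item (8) has the shape ``$p$ has $X$ iff $p^\dual$ has $Y$'' with $X$ (restrictability) and $Y$ (surjection-functoriality) \emph{different} properties. If you prove the forward implication ``$p$ restrictable $\Rightarrow p^\dual$ surjection-functorial'' and then apply it to $p^\dual$ using (1), you obtain ``$p^\dual$ restrictable $\Rightarrow p$ surjection-functorial'' --- which has $p$ and $p^\dual$ swapped but $X$ and $Y$ in the same positions. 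That is not the converse you need, which is ``$p^\dual$ surjection-functorial $\Rightarrow p$ restrictable.'' The paper does the converse as a separate, symmetric computation: start from surjection-functoriality of $p^\dual$ applied to $B := M^\vee$, $A := (M/L)^\vee$, $D := (M/N)^\vee$, dualize the resulting surjection via Lemma~\ref{lemma:dualofintersectionissum}, and arrive at $p(L\cap N, N) \subseteq p(L,M) \cap N$. You would need to carry out this second computation explicitly; it cannot be deduced formally from the first plus (1).

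Two smaller notes: your invocation of Lemma~\ref{lem:dualitylift} in (8) is unnecessary --- the paper gets by with Lemma~\ref{lemma:dualofintersectionissum} alone, and $\cap N$ is absorbed through the identity $\bigl(M/(p(L,M)\cap N)\bigr)^\vee = (M/p(L,M))^\vee + (M/N)^\vee$ --- and in (9) the phrase ``the two ingredients of functoriality swap places'' glosses over the fact that order-preservation on ambient modules and restrictability are not literally the same; Lemma~\ref{lem:restrict} provides the bridge, and the argument must pass through it in both directions as the paper does.
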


\begin{proof}
(\ref{it:bidual}): Using the definition and Lemma~\ref{lem:duality},  for any $(A,B) \in \cP$: \begin{align*}
p^{\dual \dual}(A,B) &= \left(B^\vee / p^\dual((B/A)^\vee, B^\vee)\right)^\vee = \left(B^\vee / (B / p(A,B))^\vee\right)^\vee \\
&= p(A,B)^{\vee \vee} = p(A,B),
\end{align*}
since everything is being considered as a submodule of $B$.

(\ref{it:extint}): Let $(A,B) \in \cP^\vee$.  Then $((B/A)^\vee, B^\vee) \in \cP$, so we have $(B/A)^\vee \subseteq p((B/A)^\vee, B^\vee) \subseteq B^\vee$.  Thus, we have \[
B^\vee \onto B^\vee / (B/A)^\vee \cong A^\vee \onto B^\vee / p((B/A)^\vee, B^\vee) \cong p^\dual(A,B)^\vee.
\]
Taking duals again, we have $p^\dual(A,B) = p^\dual(A,B)^{\vee \vee} \subseteq A^{\vee \vee} = A$.

(\ref{it:intext}): Let $(A,B) \in \cP^\vee$.  Then $((B/A)^\vee, B^\vee) \in \cP$, so $p((B/A)^\vee, B^\vee) \subseteq (B/A)^\vee$.  Thus, we have \[
A = A^{\vee \vee} = \left(\frac{B^\vee}{(B/A)^\vee}\right)^\vee \subseteq \left( \frac{B^\vee}{p((B/A)^\vee, B^\vee)} \right)^\vee = p^\dual(A,B).
\]

(\ref{it:opsub}): Note that by (\ref{it:bidual}), we need only prove one direction of the equivalence.  Accordingly, suppose $p$ is order-preserving on submodules.  Let $A\subseteq C \subseteq B$ be such that $(A,B), (C,B) \in \cP^\vee$.  Then we have $(B/C)^\vee \subseteq (B/A)^\vee \subseteq B^\vee$, and $((B/C)^\vee, B^\vee), ((B/A)^\vee, B^\vee) \in \cP$.  Then by assumption, we have $p((B/C)^\vee, B^\vee) \subseteq p((B/A)^\vee, B^\vee)$.  Thus we get a natural surjection \[
\frac{B^\vee}{p((B/C)^\vee, B^\vee)} \onto \frac{B^\vee}{p((B/A)^\vee, B^\vee)}.
\]
Applying Matlis duality and using the definition of $\dual$, it follows that: \[
p^\dual(C,B) = \left( \frac{B^\vee}{p((B/C)^\vee, B^\vee)}\right)^\vee \supseteq \left( \frac{B^\vee}{p((B/A)^\vee, B^\vee)}\right)^\vee = p^\dual(A,B).
\]

(\ref{it:idem}): Again by (\ref{it:bidual}), we need only prove one direction of the equivalence.  So suppose $p$ is idempotent, and $(A,B),(p^\dual(A,B),B) \in \cP^\vee$.  Then we have: \begin{align*}
    p^\dual(p^\dual(A,B),B)^\vee &= \frac{B^\vee}{p^\dual((B / p^\dual(A,B))^\vee,B^\vee)} & \\
    &= \frac{B^\vee}{p(p^{\dual \dual}((B/A)^\vee, B^\vee), B^\vee)} &\text{ by Lemma~\ref{lem:duality}} \\
    &= \frac{B^\vee}{p(p((B/A)^\vee, B^\vee), B^\vee)} &\text{ by (\ref{it:bidual})}\\
    &= \frac{B^\vee}{p((B/A)^\vee, B^\vee)} &\text{ since $p$ is idempotent} \\
    &= \left(\frac{B^\vee}{p((B/A)^\vee, B^\vee)}\right)^{\vee \vee} \\
    &=  p^\dual(A,B)^\vee.
\end{align*}
Another application of Matlis duality finishes the proof.

(\ref{it:clint}): This follows from (\ref{it:extint}), (\ref{it:opsub}), and (\ref{it:idem}).

(\ref{it:intcl}): This follows from (\ref{it:intext}), (\ref{it:opsub}), and (\ref{it:idem}).

(\ref{it:ressurfunc}): First suppose $p$ is restrictable. Let $A,B,D$ be 
such that $A,D$ are submodules of $B$ and $(A,B),((A+D)/D,B/D) \in \cP^\vee$.  Set $M := B^{\vee}$, $L := (B/A)^\vee \subseteq B^\vee$, and $N := (B/D)^\vee \subseteq B^\vee$.  Note that $L \cap N = (B / (A+D))^\vee$.  As a result, both $(L \cap N,N)$ and $(L,M)$ are in $\cP$. By restrictability of $p$, we have $p(L \cap N, N) \subseteq p(L,M) \cap N$.  Thus, we get a natural surjection $N / p(L\cap N, N) \onto N / (p(L,M) \cap N)$, and thus an inclusion \[
\left(\frac{N} {p(L,M) \cap N}\right)^\vee \subseteq \left(\frac{N} {p(L \cap N, N)}\right)^\vee.
\]
We then obtain the following sequence of equalities, with the lone inclusion on the third line below given by the inclusion shown above: \begin{align*}
\frac{p^\dual(A,B) + D}{D} &= \frac{\left(B^\vee / p((B/A)^\vee, B^\vee)\right)^\vee+D}{D} = \frac{(M / p(L,M))^\vee+D}{D} \\
&= \frac{(M/p(L,M))^\vee + (M/N)^\vee}{(M/N)^\vee} = \frac{ \left(\frac{M}{p(L,M) \cap N}\right)^\vee}{(M/N)^\vee} \\
&= \left(\frac{N}{p(L,M) \cap N}\right)^\vee \subseteq \left(\frac N {p(L \cap N, N)}\right)^\vee \\
&= p^\dual((N / (L \cap N))^\vee, N^\vee) = p^\dual((A+D)/D, B/D)
\end{align*}
where the fourth, sixth and seventh equalities follow from Lemma~\ref{lemma:dualofintersectionissum}.
Hence, $p^\dual$ is surjection-functorial.

Conversely, suppose $p^\dual$ is surjection-functorial.  Let $M \in \cM$ and suppose $L, N$ are submodules of $M$ such that $(L \cap N, N), (L,M) \in \cP$.  Set 
\[B := M^\vee,\ A := (M/L)^\vee \subseteq M^{\vee}=B,\ \text{and}\ D := (M/N)^\vee \subseteq M^\vee=B.\]  Note that $(N /(L \cap N))^\vee = (A+D)/D$ by Lemma~\ref{lemma:dualofintersectionissum}.  By surjection-functoriality of $p^\dual$, we have 
\[\displaystyle\frac{p^\dual(A,B) +D}D \subseteq p^\dual\left(\displaystyle\frac{A+D}D,\displaystyle\frac BD\right),\] 
giving us a natural surjection 
\[\displaystyle\frac{B/D}{(p^\dual(A,B)+D)/D} \onto \displaystyle\frac{B/D}{p^\dual\left(\frac{A+D}D, \frac BD\right)},\] and thus an inclusion \[
\left(\frac{B/D}{p^\dual\left(\frac{A+D}D, \frac BD\right)}\right)^\vee \subseteq \left( \frac{B/D}{(p^\dual(A,B)+D)/D}\right)^\vee
\]
We then obtain the following sequence of equalities, with the lone inclusion on the second line below given by the inclusion shown above: \begin{align*}
p(L \cap N, N) &= \left(\frac{N^\vee} {p^\dual\left(\left(\frac N {L \cap N}\right)^\vee, N^\vee \right)}\right)^\vee = \left(\frac{B/D}{p^\dual\left(\frac{A+D}D, \frac BD\right)}\right)^\vee \\
&\subseteq \left( \frac{B/D}{\left(\frac{p^\dual(A,B)+D}D\right)}\right)^\vee = \left(\frac B {p^\dual(A,B)+D}\right)^\vee\\
&= \left(\frac B {p^\dual(A,B)}\right)^\vee \cap (B/D)^\vee = p(L,M) \cap N
\end{align*}
where the second and fourth equalities follow from Lemma~\ref{lemma:dualofintersectionissum}.
Hence, $p$ is restrictable.

(\ref{it:func}): By (\ref{it:bidual}) and (\ref{it:opsub}), it suffices to prove that if $p$ is functorial, so is $p^\dual$.

Accordingly, suppose $p$ is functorial.  Then in particular it is surjection-functorial, whence by (\ref{it:ressurfunc}), $p^\dual$ is restrictable.  Then by Lemma~\ref{lem:restrict}(1), $p^\dual$ is order-preserving on ambient modules. On the other hand, since $p$ is functorial, it is order-preserving on ambient modules.  Since $p$ is also order-preserving on submodules, Lemma~\ref{lem:restrict}(2) then gives that $p$ is restrictable.  But then by (\ref{it:bidual}) and (\ref{it:ressurfunc}), $p^\dual$ is surjection-functorial.

Since $p^\dual$ is both order-preserving on ambient modules and surjection-functorial, it must be functorial.
\end{proof}

In \cite[Definition 3.1]{ERGV-chdual}, we defined a finitistic version of a closure operation. Here we use an alternative definition more suited to the nonresidual closure operations studied in this paper:

\begin{defn}\label{def:nonresidualfinitistic}
Let $\cl$ be a functorial closure operation (not necessarily residual) on a class $\cP$ of pairs of $R$-modules as in Definition \ref{def:pairops}. We define the \emph{finitistic version} $\cl_f$ of $\cl$ by
\begin{align*}
L^{\cl_f}_M = \bigcup \{(L \cap U)^{\cl}_U \mid U &\text{ is a \fg\ submodule of } M\\
&{\text{such that } (L \cap U, U) \in \cP}\}.
\end{align*}
\end{defn}

By the following lemma (a rephrased version of \cite[Lemma 3.2]{ERGV-chdual}), this definition agrees with the definition of a finitistic version given in \cite[Definition 3.1]{ERGV-chdual} when $\cl$ is residual.

\begin{lemma}[{\cite[Lemma 3.2]{ERGV-chdual}}]\label{lem:altfg}
Let $\cl$ be a functorial, residual closure operation on a class $\cP$ of pairs of $R$-modules as in Definition \ref{def:pairops}. Then for $(L,M) \in \cP$, \[
L^{\cl_f}_M:=\bigcup \{L_{N}^{\cl} \mid L \subseteq N \subseteq M,\ N/L \text{ finitely-generated and } (L,N) \in \cP\} .\]
\end{lemma}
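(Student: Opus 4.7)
The plan is to prove both inclusions of the two unions. The key observation is that residuality translates the closure of $L$ inside a containing module into the closure of $0$ in the quotient modulo $L$, and that whenever a submodule $U$ satisfies $L + U = N$, the canonical map gives an isomorphism $\varphi\colon U/(L\cap U) \xrightarrow{\sim} N/L$ which carries $0^\cl_{U/(L\cap U)}$ to $0^\cl_{N/L}$ (because $\cl$, being a pair operation, respects isomorphisms).

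For the $(\supseteq)$ direction, suppose $U \subseteq M$ is finitely generated with $(L \cap U, U) \in \cP$, and set $N := L + U$. Then $N/L \cong U/(L \cap U)$ is finitely generated. Residuality gives $(L \cap U)^\cl_U = \pi_U^{-1}(0^\cl_{U/(L \cap U)})$ and $L^\cl_N = \pi_N^{-1}(0^\cl_{N/L})$, where $\pi_U\colon U \onto U/(L\cap U)$ and $\pi_N\colon N \onto N/L$ are the natural surjections. The commutative square relating these via $\varphi$ then yields $(L \cap U)^\cl_U \subseteq L^\cl_N$ as submodules of $N$.

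For the $(\subseteq)$ direction, let $L \subseteq N \subseteq M$ with $N/L$ finitely generated and $(L,N) \in \cP$, and take $x \in L^\cl_N$. Choose $n_1,\dots,n_k \in N$ whose images generate $N/L$, and set $U := \langle n_1,\dots,n_k, x\rangle$. Then $U$ is finitely generated, $L + U = N$, and so again $U/(L \cap U) \cong N/L$ via $\varphi$. Residuality applied to $(L,N)$ gives $\pi_N(x) \in 0^\cl_{N/L}$; transporting along $\varphi^{-1}$ and applying residuality to $(L \cap U, U)$ places $x$ into $(L \cap U)^\cl_U$, exhibiting it in the left-hand union.

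The one step requiring care is checking that the ``intermediate'' pairs $(L, L+U)$ in the first direction and $(L \cap U, U)$ in the second actually belong to $\cP$; for any standard category of modules closed under submodules and quotients this is automatic, but a fully rigorous write-up should either state these memberships as side hypotheses on $\cP$ or invoke the relevant closure property of the ambient class. Beyond that bookkeeping, the content of the proof is the single observation that both sides of the asserted equality are, via residuality and the isomorphism $U/(L\cap U) \cong (L+U)/L$, encoding the same subset of the common quotient $N/L$.
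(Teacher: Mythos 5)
Your argument is mathematically correct, and note that the paper does not supply its own proof of this lemma---it is cited from \cite[Lemma 3.2]{ERGV-chdual} and only restated here---so there is no in-text proof to compare against. Both inclusions of the two unions are handled cleanly by the second-isomorphism-theorem identification $U/(L\cap U) \cong (L+U)/L$ together with residuality in the form $(\ker q)^\cl_M = q^{-1}(0^\cl_P)$, and invoking isomorphism-invariance of pair operations to transport $0^\cl$ across that identification is exactly the right mechanism. You also correctly isolate the one genuine bookkeeping point: that the intermediate pairs $(L,\, L+U)$ and $(L\cap U,\, U)$ belong to $\cP$. Under the paper's standing hypotheses (a class $\cM$ closed under submodules and quotients, with $\cP$ the class of nested pairs drawn from $\cM$) this is automatic, but it does need saying in the raw generality of Definition~\ref{def:pairops}.

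Two smaller remarks. First, you never use the functoriality hypothesis; your proof gets by on residuality and isomorphism-invariance alone. Functoriality does give a slightly more economical route to the containment $(L\cap U)^\cl_U \subseteq L^\cl_{L+U}$: with $N := L+U$ and $\iota\colon U \into N$, functoriality yields $\iota\bigl((L\cap U)^\cl_U\bigr) \subseteq (L\cap U)^\cl_N$, and order-preservation on submodules (part of being a closure) gives $(L\cap U)^\cl_N \subseteq L^\cl_N$. Residuality is genuinely needed only for the reverse pullback from $N$ down to the finitely generated $U$, where your construction of $U$ from generators of $N/L$ together with $x$ is the standard device. Second, your directional labels are swapped relative to the equality as written (what you call the $(\supseteq)$ direction in fact establishes $\bigcup_U (L\cap U)^\cl_U \subseteq \bigcup_N L^\cl_N$); this is purely a labeling slip and does not affect the substance.
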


Next we use our nonresidual duality to give several ways to view dual interiors on ideals.

\begin{prop}[cf. Theorem 3.3 of \cite{ERGV-chdual}]
Let $(R,\m,k)$ be a complete Noetherian local ring, let $\cl$ be a functorial closure operation (not necessarily residual) on the class of Artinian $R$-modules $\mathcal{A}$, and let $I$ be an ideal of 
$R$. We have
\begin{align*}
I^R_{\cl^\dual} 
 &\overset{(1)}{=}\ann_R\left((\ann_E I)^\cl_E\right)\overset{(2)}{=}\bigcap_{M \in \mathcal{A}} \ann_R\left((\ann_M I)_M^\cl\right) \\
 &\overset{(3)}{\subseteq} 
 I^R_{{\cl_f}^\dual} 
 \overset{(4)}{=}\ann_R\left((\ann_E I)^{\cl_f}_E\right)\overset{(5)}{=}\bigcap_{M \subseteq E \text{ f.g.}} \ann_R\left((\ann_M I)_M^\cl\right) \\
 &\overset{(6)}{=}\bigcap_{\lambda(M)<\infty} \ann_R\left((\ann_M I)_M^\cl\right) \overset{(7)}{\subseteq} \bigcap_{\lambda(R/J)<\infty} \ann_R\left((\ann_{R/J} I)_{R/J}^\cl\right)  \\
 &\overset{(8)}{\subseteq} \bigcap_{\lambda(R/J)<\infty} J:_R(J:_RI)_R^\cl.\\
\end{align*}
If $R$ is approximately Gorenstein and $\{J_t\}$ is a decreasing nested sequence of irreducible ideals cofinal with the powers of $\m$, then we get a further inclusion:
\[\bigcap_{\lambda(R/J)<\infty} J:_R(J:_RI)_R^\cl \overset{(9)}{\subseteq} \bigcap_t J_t:_R(J_t:_RI)_R^\cl.\]
\end{prop}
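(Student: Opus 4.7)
The plan is to traverse the chain left to right, grouping steps by technique. Equalities (1) and (4) are direct unpackings of Definition~\ref{def:nonresidualdual}: since $R^\vee=E$ and $(R/I)^\vee\cong\ann_E I$, one obtains $I^R_{\cl^\dual}=\left(E/(\ann_E I)^\cl_E\right)^\vee$, and the standard identification $(E/N)^\vee\cong\ann_R(N)$ for $N\subseteq E$ (valid because $\Hom_R(E,E)\cong R$ in the complete local case) finishes (1); step (4) is the same argument with $\cl_f$ replacing $\cl$.

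Steps (2) and (6) are the crux and share a common idea: every Artinian $R$-module $M$ embeds in some $E^n$, and the $n$ coordinate projections $\pi_i\colon E^n\to E$ are jointly injective. For (2), $\supseteq$ is automatic because $E\in\mathcal{A}$; for $\subseteq$, given an Artinian $M\hookrightarrow E^n$, functoriality and order-preservation on submodules yield $(\ann_M I)^\cl_M\subseteq(\ann_{E^n}I)^\cl_{E^n}$, and functoriality through each $\pi_i$ gives $\pi_i\bigl((\ann_{E^n}I)^\cl_{E^n}\bigr)\subseteq(\ann_E I)^\cl_E$. Joint injectivity of the $\pi_i$ then forces any $r$ annihilating $(\ann_E I)^\cl_E$ to annihilate $(\ann_M I)^\cl_M$. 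Step (6) runs on exactly the same engine: $\supseteq$ is free, since f.g.\ submodules of $E$ automatically have finite length, while for $\subseteq$ we embed a finite-length $M$ in $E^n$ and note that each image $M_i:=\pi_i(M)\subseteq E$ is f.g., then push functoriality through the $\pi_i|_M$.

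The remaining steps are essentially bookkeeping. For (3), functoriality combined with order-preservation on submodules (built into the definition of a closure operation) and the (trivial) closure of pairs of Artinian modules under composition forces $\cl$ to be restrictable via Lemma~\ref{lem:restrict}(2); restrictability applied to each f.g.\ $U\subseteq E$ gives $(\ann_U I)^\cl_U\subseteq(\ann_E I)^\cl_E$, hence $(\ann_E I)^{\cl_f}_E\subseteq(\ann_E I)^\cl_E$, and (3) follows by taking annihilators and invoking (1) and (4). Step (5) is Definition~\ref{def:nonresidualfinitistic} together with the fact that $\ann_R$ converts unions into intersections. Steps (7) and (9) are purely set-theoretic — the indexing family on the right is a subset of the one on the left, so the intersection only grows. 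For (8), functoriality of $\cl$ along $\pi\colon R\twoheadrightarrow R/J$ yields $\pi\bigl((J:_R I)^\cl_R\bigr)\subseteq\bigl((J:_R I)/J\bigr)^\cl_{R/J}$, so if $r$ annihilates the latter then $r(J:_R I)^\cl_R\subseteq\ker(\pi)=J$, i.e., $r\in J:_R(J:_R I)^\cl_R$.

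The main obstacle is packaging (2), (3), and (6) cleanly, since each must extract genuine content from the bare hypothesis of functoriality. The useful mini-observation to isolate is that, because every Artinian $R$-module embeds in $E^n$ for some $n$ and the coordinate projections are jointly injective, annihilation of $(\ann_M I)^\cl_M$ for arbitrary $M$ reduces to annihilation of closures of submodules of $E$ (or of f.g.\ submodules of $E$, for (6)). With this reduction in hand everything else in the chain is essentially formal.
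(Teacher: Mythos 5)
Your proof is correct and takes essentially the same approach as the paper's; where the paper defers steps (2), (5), (6) to the analogous proof in the earlier residual-closure paper, you have reconstructed the standard argument (embed an Artinian or finite-length module in $E^n$ and push functoriality through the coordinate projections), and your justification of (3) via restrictability (Lemma~\ref{lem:restrict}(2)) correctly unpacks the paper's terse claim that $\cl_f \le \cl$.
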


\begin{rem}
The key difference between this result and Theorem 3.3 of \cite{ERGV-chdual} is that (8) is an inclusion and not equality. To get equality, we need $\cl$ to be residual. Consequently, we cannot prove that (7) is an equality in the approximately Gorenstein case.

In theory, this is an obstruction to computing relative interiors as we computed absolute interiors in \cite{ERGV-chdual}, but as we will see in Section \ref{sec:examples}, we are still able to compute examples of basically empty interiors.
\end{rem}

\begin{proof}
\begin{itemize}
    \item[(1)] By definition,
    \[I^R_{\cl^\dual}=\left( \frac{R^\vee}{\left(\left(R/I\right)^\vee\right)_{R^\vee}^\cl }\right)^\vee=\left(\frac{E}{\left(\ann_E I)_E^\cl\right)} \right)^\vee=\ann_R\left((\ann_E I)_E^\cl\right).\]
    \item[(4)] Since $\cl_f$ is also a closure operation, we get the same equality for $\cl_f$.
    \item[(2)] The proof is identical to the proof of the second equality in \cite[Theorem 3.3]{ERGV-chdual}.
    \item[(3)] This holds as $\cl_f \le \cl$.
    \item[(5)] This proof is the same as the proof of the fourth equality in \cite[Theorem 3.3]{ERGV-chdual}, using our Definition \ref{def:nonresidualfinitistic} in place of Lemma 3.2 of \cite{ERGV-chdual}.
    \item[(6)] This proof is the same as the proof of the fifth equality in \cite[Theorem 3.3]{ERGV-chdual}.
    \item[(7)] This holds because every $R/J$ of finite length is an $R$-module of finite length.
    \item[(8)] Let $\pi:R \to R/J$ be the natural surjection. Then we have 
     \[\ann_{R/J} I=(J:_R I)/J=\pi(J:_R I).\]  Since $\cl$ is functorial, we have
    \begin{align*}
    \frac{(J:_R I)^\cl_R}{J}&=\pi((J:_R I)_R^\cl) \subseteq \left(\pi(J:_R I) \right)_{R/J}^\cl\\
    &=\left(\frac{J:_RI}{J} \right)_{R/J}^\cl=(\ann_{R/J} I)_{R/J}^\cl.
    \end{align*}
    Hence 
    \[\ann_R\left((\ann_{R/J} I)_{R/J}^\cl \right) \subseteq \ann_R \frac{(J:_RI)_R^\cl}{J}=J:_R(J:_RI)_R^\cl,
    \]
    giving us the desired inclusion.
    \item[(9)] This holds because $\lambda(R/J_t)<\infty$ for each $t$. \qedhere
\end{itemize}
\end{proof}

Next we define a relative version of a Nakayama interior (dual to Nakayama closure, recall Definition~\ref{def:Nakcl}), based on \cite{ERGV-chdual}

\begin{defn}
\label{nakayamainterior}
Let $(R, \m)$ be a local ring and $\int$ a relative interior operation on Artinian $R$-modules. We say that $\int$ is a Nakayama interior if for any Artinian $R$-modules $A \subseteq C \subseteq B$, if 
$(A:_C \m)^B_i\subseteq A$, then $A^B_{\int} = C^B_{\int}$ (or equivalently, $C^B_{\int} \subseteq A$).
\end{defn}

\begin{prop}
\label{pr:nakdual}
Let $(R, \m)$ be a complete local ring. Let $\cl$ be a closure operation, not necessarily residual, on the class of finitely generated $R$-modules. Then $\cl$ is a Nakayama closure if and only if $\cl^\dual$ is a Nakayama interior.
\end{prop}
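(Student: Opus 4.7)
The plan is to translate, via Matlis duality, each instance of the Nakayama closure implication into the Nakayama interior implication on the dual side, so that the two universal conditions are seen to be literally equivalent.

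First, I will set up a correspondence between triples. Given a chain of finitely generated modules $L \subseteq N \subseteq M$, set $B := M^\vee$, $C := (M/L)^\vee$, and $A := (M/N)^\vee$, all regarded as submodules of $B$; the inclusions $L \subseteq N \subseteq M$ reverse under $(-)^\vee$ to give Artinian modules $A \subseteq C \subseteq B$. Matlis duality over the complete local ring $R$ makes this assignment a bijection between chains of f.g.\ modules and chains of Artinian modules (with containment reversed).

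Second, I will identify the auxiliary submodule $A :_C \m$. Since $L \subseteq L + \m N \subseteq N$, the dual $D := (M/(L+\m N))^\vee$ sits between $A$ and $C$. The key computation is the identity $D = A :_C \m$ inside $B$: for $c \in B = \Hom_R(M,E)$, membership in $A :_C \m$ says $c(L) = 0$ and $\m \cdot c(N) = c(\m N) = 0$, which is exactly the condition $c(L + \m N) = 0$ defining $D$.

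Third, I will apply Lemma~\ref{lem:duality} with $p = \cl$ to each of the submodules $L$, $N$, and $L + \m N$ of $M$, obtaining
\[
C^B_{\cl^\dual} = (M/L^\cl_M)^\vee,\quad A^B_{\cl^\dual} = (M/N^\cl_M)^\vee,\quad (A :_C \m)^B_{\cl^\dual} = (M/(L+\m N)^\cl_M)^\vee.
\]
Since Matlis duality reverses containment between submodules of $M$ and submodules of $B$, it follows that $N \subseteq (L+\m N)^\cl_M$ is equivalent to $(A :_C \m)^B_{\cl^\dual} \subseteq A$, and $L^\cl_M = N^\cl_M$ is equivalent to $A^B_{\cl^\dual} = C^B_{\cl^\dual}$.

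Thus the Nakayama closure implication for $(L,N,M)$ is literally the same assertion as the Nakayama interior implication for $(A,C,B)$. Because the correspondence exhausts all f.g.\ triples on one side and all Artinian triples on the other, quantifying over both sides gives the stated equivalence. The main obstacle is the verification $D = A :_C \m$; once that identification is cleanly in place, the rest is routine bookkeeping with Lemma~\ref{lem:duality} and the standard order-reversing bijection of Matlis duality.
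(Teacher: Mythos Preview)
Your proof is correct and follows essentially the same approach as the paper: both hinge on identifying $(M/(L+\m N))^\vee$ with $A :_C \m$ (the paper cites \cite[Lemma 5.4]{ERGV-chdual} for this, while you verify it directly) and then applying Lemma~\ref{lem:duality} to translate each containment across Matlis duality. The only cosmetic difference is that the paper treats the two implications separately, whereas you package the correspondence as a single bijection of triples and argue both directions at once.
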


\begin{proof}
Suppose $\cl$ is a Nakayama closure.  Let $A \subseteq C \subseteq B$ be Artinian $R$-modules, and suppose $(A :_C \m)_{\cl^\dual}^B \subseteq A$.  Set $M := B^\vee$, $N := (B/A)^\vee$, and $L := (B/C)^\vee$.  We consider $L \subseteq N \subseteq M$ in the usual way.  By \cite[Lemma 5.4]{ERGV-chdual}, we have $(A :_C \m)^\vee = \frac{M}{L+\m N}$.  Thus, we have \[
(M/N)^\vee = A \supseteq (A :_C \m)_{\cl^\dual}^B = \left(\frac{B^\vee}{((B / (A :_C\m))^\vee)^\cl_{B^\vee}} \right)^\vee = \left(\frac{M}{(L + \m N)^\cl_M}\right)^\vee.
\]
Applying Matlis duality, we obtain a natural surjection 
\[M/N \onto M / (L + \m N)^\cl_M, \text{ whence } N \subseteq (L + \m N)^\cl_M.\]  The fact that $\cl$ is a Nakayama closure then implies that $N \subseteq L^\cl_M$, whence $M/N \onto M/ L^\cl_M$, so that $(M/L^\cl_M)^\vee \subseteq (M/N)^\vee$.  Therefore, \[
C_{\cl^\dual}^B = \left(\frac{B^\vee}{((B/C)^\vee)^\cl_{B^\vee}}\right)^\vee = \left(\frac{M}{L^\cl_M}\right)^\vee  \subseteq (M/N)^\vee = A.
\]

Conversely, suppose $\int= \cl^\dual$ is a Nakayama interior.  Let $L \subseteq N \subseteq M$ be finitely generated $R$-modules, and suppose $N \subseteq (L + \m N)^\cl_M$.  Let $A = (M/N)^\vee$ and $C := (M/L)^\vee$, thought of as submodules of $B := M^\vee$ in the usual way.  Then $M/N \onto M / (L + \m N)^\cl_M$, so that $(M / (L + \m N)^\cl_M)^\vee \subseteq (M/N)^\vee$.  Hence by \cite[Lemma 5.4]{ERGV-chdual} again, we have \[
(A :_C \m)_{\int}^B = \left(\frac{M}{(L + \m N)^\cl_M}\right)^\vee \subseteq (M/N)^\vee = A.
\]
Then since $\int$ is a Nakayama interior, it follows that $C_{\int}^B \subseteq A$.  Thus,\[
N= (B/A)^\vee \subseteq (B/C_{\int}^B)^\vee = ((B/C)^\vee)^\cl_{B^\vee} = L^\cl_M,
\]
where the second to last equality comes from Lemma \ref{lem:duality} and Proposition \ref{pr:dualityprops}.
\end{proof}

We end this section by showing that the identity operation is an example of a relative Nakayama interior, showing that \cite[Lemma 5.3]{ERGV-chdual} works in the relative case. 

\begin{prop}[{cf. \cite[Lemma 5.3]{ERGV-chdual}}]
Let $R$ be a Noetherian local ring. The identity operation is a relative Nakayama interior on the class of Artinian $R$-modules.
\end{prop}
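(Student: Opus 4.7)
The plan is to verify the two requirements in turn. The identity assignment $p(L,M) := L$ is visibly intensive, order-preserving on submodules, and idempotent, so by Definition~\ref{def:pairops} it is a relative interior operation; I will denote it by $\int$, so $L^M_\int = L$ for every pair $(L,M)$.

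For the Nakayama condition, I would simply unwind both the hypothesis and the conclusion through the definition of $\int$. Given Artinian $R$-modules $A \subseteq C \subseteq B$ with $(A :_C \m)^B_\int \subseteq A$, the hypothesis becomes $A :_C \m \subseteq A$, which together with the automatic containment $A \subseteq A :_C \m$ yields $A :_C \m = A$, i.e.\ $(0 :_{C/A} \m) = 0$. The target conclusion $A^B_\int = C^B_\int$ likewise collapses to $C \subseteq A$, equivalently $C/A = 0$.

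The only non-formal ingredient is the standard fact that a nonzero Artinian module over a local ring has nonzero socle: such a module contains a simple submodule, which must be isomorphic to $R/\m$ since $R$ is local, and therefore lies inside the submodule annihilated by $\m$. The Artinian hypothesis enters precisely here: $C/A$ is a quotient of the Artinian submodule $C \subseteq B$, hence Artinian, and combining the standard fact above with $(0 :_{C/A} \m) = 0$ forces $C/A = 0$.

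I do not anticipate a real obstacle: the whole proof is essentially a translation of the Nakayama condition through the identity, together with one invocation of the nonvanishing of the socle for nonzero Artinian modules over a local ring. If anything, the only point to keep straight is the direction of the automatic containment $A \subseteq A :_C \m$, which is what lets the Nakayama hypothesis collapse to the socle vanishing of $C/A$.
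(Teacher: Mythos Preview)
Your proof is correct and follows essentially the same approach as the paper: the paper simply refers back to \cite[Lemma 5.3]{ERGV-chdual} with $B$ replaced by $C$, and that argument is precisely what you have written out---collapse the Nakayama hypothesis through the identity to get $\Soc(C/A)=0$, then invoke the fact that a nonzero Artinian module over a Noetherian local ring has nonzero socle (cf.\ Proposition~\ref{pr:fcoglam}).
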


\begin{proof}
The proof is identical to the proof of \cite[Lemma 5.3]{ERGV-chdual} with the instances of $B$ in the proof replaced with $C$ such that $A \subseteq C \subseteq B$.
\end{proof}

\section{Basically full closure and its dual interior: basically empty interior}\label{sec:bfcbei}

In this section we investigate basically full closure, one of the main examples of this paper. Basically full closure is a nonresidual Nakayama closure, so we are able to use our methods to find its dual interior operation.

\begin{defn}
\cite[Definition 2.1]{HRR-bf}
\label{def:basfull} Let $(R,\m)$ be a quasilocal ring. If $L \subseteq M$ are $R$-modules with $L$ finitely generated, we say that $L$ is \emph{basically full in $M$} if for any finitely generated $R$-module $N$ with $L \subsetneqq N \subseteq M$, no miminal generating set for $L$ can be extended to one for $N$.
\end{defn}

\begin{defn}\label{def:Jcol}
Let $R$ be a commutative ring and let $J$ be an ideal of $R$.  Then for any submodule inclusion $L \subseteq M$, the \textit{$J$-basically full closure} of $L$ in $M$ is given by \[
\Jcol JLM := (JL :_M J).
\]
\end{defn}

\begin{rem}
\label{rem:twobasfull}
When $R$ is quasilocal and $M/L$ has finite length, it is shown in \cite[{Theorem 4.2}]{HRR-bf} that $(\m L :_M \m)$ is the \emph{unique} smallest basically full module that contains $L$.  Hence, in this case these authors speak of the \emph{basically full closure} of $L$ in $M$. In our notation, this is the $\m$-basically full closure of $L$ in $M$.
\end{rem}

Notice that the operation $(L,M) \mapsto (J L :_M J)$ is a closure operation, whether or not we use the ideal $J=\m$ and regardless of whether $M/L$ has finite length (see Proposition \ref{jcolnakayama} for a proof). 
This closure operation has been discussed previously. Rush discusses $J$-basically full closure of ideals in \cite{Rush-contracted}, where he calls an ideal $I$ $J$-basically full if $(JI:J)=I$. Vraciu and the third-named author reference this closure in the ideal setting in \cite{VaVr}, where they call this closure $\Jcolsym J$.  To follow the notation of Heinzer, Ratliff and Rush, we make the following definition:

\begin{defn}
We say that a submodule $L$ of $M$ is \textit{$J$-basically full} if $L=L_M^{\Jcolsym J}$.  
\end{defn}

The next proposition is essentially due to a test for integral closure from Vasconcelos \cite[Proposition 1.58]{Vascon05} and shows the importance of $J$-basically full closure.  However, a more general result is due to Ratliff and Rush \cite[Proposition 3.1]{RRdeltamod}. 

\begin{prop}\label{pr:vascriterion}
Let $R$ be a domain and $I$ an ideal in $R$.  $I$ is integrally closed if and only if $\Jcol{J} I R=I$ for all ideals $J$ in $R$. 
\end{prop}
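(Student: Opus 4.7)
My plan is to prove the equivalence directly from the definition $\Jcol{J}{I}{R} = (JI :_R J)$: the forward direction via the classical determinant trick, the reverse via an explicit construction of a witnessing ideal.

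For the forward direction, assume $I$ is integrally closed and let $J$ be a nonzero ideal. The inclusion $I \subseteq (JI :_R J)$ is immediate, since $yJ \subseteq IJ$ whenever $y \in I$. For the reverse, take $x \in (JI :_R J)$, so $xJ \subseteq IJ$. Assuming $J = (j_1,\dots,j_n)$ is finitely generated (automatic in the Noetherian setting of the cited results of Vasconcelos and Ratliff--Rush), we have $xj_i = \sum_k a_{ik} j_k$ with $a_{ik} \in I$. The determinant trick---multiplying the matrix equation $(x \, \mathrm{Id}_n - A) \vec{j} = 0$, with $A = (a_{ik})$, by its adjugate and using that $R$ is a domain with $J \neq 0$---gives $\det(x \, \mathrm{Id}_n - A) = 0$, whose expansion is an integral equation $x^n + c_1 x^{n-1} + \dots + c_n = 0$ with $c_k \in I^k$. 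Hence $x \in \bar{I} = I$.

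For the reverse direction, suppose $\Jcol{J}{I}{R} = I$ for every nonzero ideal $J$, and let $x \in \bar{I}$ with integral equation $x^n + a_1 x^{n-1} + \dots + a_n = 0$ and $a_k \in I^k$. We may assume $I \neq 0$ (the case $I = 0$ being trivial). Set \[ J := I^{n-1} + I^{n-2}(x) + \dots + I(x^{n-2}) + (x^{n-1}), \] a nonzero ideal of $R$. A direct calculation yields $xJ \subseteq IJ$: for each $0 \leq k \leq n-2$, the internal shift $x \cdot I^{n-1-k}(x^k) = I \cdot I^{n-2-k}(x^{k+1})$ lies in $I$ times the $(k+1)$-th summand of $J$; and the top shift $x \cdot (x^{n-1}) = (x^n)$ is handled via the integral equation, since $x^n = -\sum_{k=1}^n a_k x^{n-k}$ with each $a_k x^{n-k} \in I^k (x^{n-k}) = I \cdot I^{k-1}(x^{n-k})$, and $I^{k-1}(x^{n-k})$ is a summand of $J$ for each $1 \leq k \leq n$ (the case $k=n$ using the summand $I^{n-1}$). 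Thus $x \in (IJ :_R J) = \Jcol{J}{I}{R} = I$ by hypothesis, so $\bar{I} \subseteq I$.

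The main subtlety is in the forward direction, where the determinant trick requires $J$ to be finitely generated. This is automatic in the Noetherian setting intended by the cited references; without Noetherianity one would need to iteratively enlarge a principal nonzero subideal $(j_0) \subseteq J$ to a finitely generated subideal of $J$ stable under the relations $xj \in IJ$, a chain that stabilizes precisely under the Noetherian hypothesis. A minor point: when $J = 0$, one has $(0 :_R 0) = R$, so the quantifier ``for all ideals $J$'' should be read as ``for all nonzero ideals $J$'' (the case $J = R$ is trivially an equality).
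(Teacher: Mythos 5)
The paper offers no proof of this proposition: it is stated as a consequence of Vasconcelos \cite[Proposition 1.58]{Vascon05} and Ratliff--Rush \cite[Proposition 3.1]{RRdeltamod} and left as a citation. Your argument is therefore not comparable to a proof in the text, but it is correct and self-contained, and it is essentially the standard one underlying the cited results. The forward direction is the determinant trick applied to the relation $xJ \subseteq IJ$: writing $xj_i = \sum_k a_{ik} j_k$ with $a_{ik}\in I$ and multiplying $(x\,\mathrm{Id}_n - A)\vec{j}=0$ by the adjugate gives $\det(x\,\mathrm{Id}_n - A)\cdot j_i = 0$, and since $R$ is a domain with $J\ne 0$ the characteristic polynomial vanishes at $x$; its coefficients are signed sums of $k\times k$ minors of a matrix with entries in $I$, hence lie in $I^k$, giving an equation of integral dependence. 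The reverse direction with the witness $J = I^{n-1} + I^{n-2}(x) + \cdots + (x^{n-1})$ is the right construction, and your index bookkeeping is accurate: for $0\le k\le n-2$, $x\cdot I^{n-1-k}(x^k) = I\cdot I^{n-2-k}(x^{k+1})\subseteq IJ$, and $x^n \in IJ$ follows from the integral equation since $a_k x^{n-k}\in I\cdot I^{k-1}(x^{n-k})$ and $I^{k-1}(x^{n-k})$ is the $m=n-k$ summand of $J$.

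Your two caveats are both well taken and worth recording. The determinant trick genuinely requires $J$ (or a suitable nonzero subideal with $xJ'\subseteq IJ'$) to be finitely generated, which the statement of the proposition does not grant but which is supplied by the Noetherian setting of the cited sources and of this paper throughout. And the ideal $J=(0)$ must be excluded from the quantifier, since $\Jcol{0}{I}{R} = (0:_R 0) = R \ne I$ for any proper $I$; the reading ``for all nonzero ideals $J$'' is the intended one, and your check that $J=R$ gives $(RI:_R R)=I$ is correct.
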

 We include an example here of an $\m$-basically full ideal which is not $\m^2$-basically full in $R$, but is $\m^2$-basically full in itself.  This example illustrates the importance of the choice of $J$ and of the ambient module in defining $J$-basically full closure. 

\begin{example}\label{ex:bfex}
Let $R=k[[x,y]]$ with $\m =(x,y)$.  The ideal $I=(x^3,x^2y^2,y^3)$ is $\m$-basically full (i.e. basically full) by \cite[Example 9.1]{HRR-bf}.  However, $\m^2 I = \m^5$, so that $I\subsetneq \m^3 = \Jcol {{\m^2}}{I}{R}$. On the other hand, viewing $I$ as a submodule of itself, we of course have $\Jcol {{\m^2}}{I}{I}=I$.
\end{example}

\begin{prop}
\label{jcolnakayama}
For any ideal $J$, the operation $\Jcolsym J$ is a functorial closure operation.  Moreover, when $(R,\m)$ is local and $J$ is finitely generated, then $\Jcolsym J$ is Nakayama when considered over the class of finitely generated modules and their submodules.
\end{prop}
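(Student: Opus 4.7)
The proof breaks into two independent parts: checking the three axioms of a closure operation (plus functoriality), and then verifying the Nakayama property.

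For the closure-operation axioms, everything follows directly from the definition $\Jcol{J}{L}{M}=(JL:_M J)$. Extensivity is immediate since $J\cdot L\subseteq JL$, so $L\subseteq(JL:_M J)$. Order-preservation on submodules is equally immediate: $L\subseteq N$ gives $JL\subseteq JN$, hence $(JL:_M J)\subseteq(JN:_M J)$. For idempotence, the key observation is that any $m\in(JL:_M J)$ satisfies $Jm\subseteq JL$ by definition, so $J\cdot(JL:_M J)\subseteq JL$; applying $(-:_M J)$ gives $(J(JL:_M J):_M J)\subseteq(JL:_M J)$, and the reverse containment is extensivity. Functoriality is the same kind of one-line check: if $g\colon M\to M'$ and $m\in(JL:_M J)$, then $J\cdot g(m)=g(Jm)\subseteq g(JL)=J\cdot g(L)$, so $g(m)\in(Jg(L):_{M'}J)$.

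The Nakayama property is the one substantive step. Assume $(R,\m)$ is local, $J$ is finitely generated, and $L\subseteq N\subseteq M$ with $L,N$ finitely generated, and suppose
\[
N\subseteq (L+\m N)^{\Jcolsym J}_M=\bigl(J(L+\m N):_M J\bigr).
\]
Unwinding the colon, this says $JN\subseteq J(L+\m N)=JL+\m\cdot(JN)$. Because $J$ and $N$ are finitely generated, $JN$ is finitely generated as an $R$-module, and hence so is the quotient $JN/JL$. The containment above reads $JN/JL=\m\cdot(JN/JL)$, so Nakayama's lemma applied in the local ring $R$ forces $JN=JL$. Taking $(-:_M J)$ then yields $(JN:_M J)=(JL:_M J)$, i.e.\ $N^{\Jcolsym J}_M=L^{\Jcolsym J}_M$, which is exactly the conclusion required.

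The only step that is not purely formal is the Nakayama application, and it is not really an obstacle: the hypothesis that $J$ be finitely generated is used precisely to guarantee that $JN$ (and thus $JN/JL$) lies in the category where Nakayama's lemma applies. No feature of $J$ being the maximal ideal or of $M/L$ having finite length is needed, which is consistent with Remark \ref{rem:twobasfull}.
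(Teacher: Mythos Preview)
Your proof is correct and follows essentially the same approach as the paper's: the closure axioms and functoriality are verified by the same direct colon manipulations, and the Nakayama step proceeds exactly as you do, by deducing $JN\subseteq JL+\m JN$, passing to the finitely generated quotient $JN/JL$, and applying Nakayama's lemma.
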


\begin{proof}
Let $L \subseteq M$ be $R$-modules.  Let $z\in L$.  Then $Jz \subseteq JL$, so $z\in \Jcol JLM$ by definition.  Hence, $\Jcolsym J$ is extensive.

Let $L \subseteq N \subseteq M$ be $R$-modules.  Let $z \in \Jcol JLM$.  Then $Jz \subseteq JL \subseteq JN$ since $L \subseteq N$, whence $z \in (JN :_M J) = \Jcol JNM$.  Hence $\Jcolsym J$ is order-preserving.

Let $L \subseteq M$ be $R$-modules and let $z\in \Jcol J {\left(\Jcol JLM\right)} M$.  Then $Jz \subseteq J\left(\Jcol JLM\right) = J(JL :_M J) \subseteq JL$.  Hence $z \in (JL :_M J) = \Jcol JLM$.

To see functoriality, let $L \subseteq M$ and $N$ be $R$-modules and let $\phi: M \ra N$ be an $R$-linear map.  Let $z\in \Jcol JLM = (JL :_M J)$. Then $Jz \subseteq JL$, so by $R$-linearity we have $J\phi(z) \subseteq J \phi(L)$, whence $\phi(z) \in (J \phi(L) :_N J) = \Jcol J{\phi(L)}N$.

To prove the Nakayama property, let $L \subseteq N \subseteq M$ be finitely generated modules, and $N \subseteq \Jcol J{(L + \m N)}M$.  Then we have $JN \subseteq J(L + \m N) = JL + \m JN$.  It follows that \[JN / JL \subseteq (JL + \m JN) / JL = \m (JN / JL).\]  
Since $JN/JL$ is finitely generated, by Nakayama's lemma we have $JN/JL = 0$, whence $N \subseteq (JL :_M J) = \Jcol JLM.$
\end{proof}

\begin{rem}
Note that $\Jcolsym J$ is in general \emph{not} a residual closure operation.  If $\pi: M \onto M/L$, then $\pi^{-1}(\Jcol J 0 {M/L}) = (L:_M J)$, which is in general not equal to $\Jcol J L M = (JL :_M J)$. For example, if we take $M=R$ to be Noetherian local, with nonzero maximal ideal, and $J=L=\m$, then $L:_M J=\m :_R \m =R$, but $JL:_M J=\m^2:_R \m \ne R$.
\end{rem}

\subsection{Dual of $\Jcolsym J$}
\label{subsec:jbfdual}

 In this subsection we use the results of Section \ref{sec:nonresidualdual} to define a dual for $\Jcolsym J$ called the $J$-basically empty interior. The reason for the name will be explained further in Section \ref{sec:cogen}. 

\begin{defn}
\label{def:jbei}
Let $J$ be an ideal and $L \subseteq M$ be $R$-modules.  We define the \emph{$J$-\bemp \  interior of $L$ with respect to $M$} as $\Jintrel JML := J (L :_M J)$.

 When we refer to a \emph{$J$-basically empty submodule} $L$ of $M$, we mean that $L$ is open in $M$ with respect to the $J$-basically empty interior, i.e. $\Jintrel JML=L$.
 \end{defn}

 \begin{prop}
 \label{pr:jbenakayama}
 Let $R$ be a commutative ring and $J$ an ideal of $R$. Then the $J$-basically empty interior is a functorial interior operation on $R$-modules. If $(R,\m)$ is Noetherian local, then $\Jintrelsym J$ is a Nakayama interior.
 \end{prop}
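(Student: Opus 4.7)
The plan is to verify the interior-operation axioms and functoriality directly from the formula $\Jintrel{J}{M}{L} = J(L:_M J)$, and then prove the Nakayama property via a short socle argument.

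For the first part, intensivity $J(L:_M J) \subseteq L$ is immediate from the defining property of the colon. Order-preservation on submodules follows because $L \subseteq L'$ implies $(L:_M J) \subseteq (L':_M J)$. For idempotence, set $N := (L:_M J)$: the trivial inclusion $JN \subseteq JN$ yields $N \subseteq (JN :_M J)$, hence $JN \subseteq J(JN:_M J)$, while the reverse inclusion $J(JN:_M J) \subseteq JN$ is again the colon identity. Finally, functoriality follows from $R$-linearity of $g: M \to M'$: if $y \in (L:_M J)$, then $Jg(y) = g(Jy) \subseteq g(L)$, so $g(y) \in (g(L):_{M'} J)$, and hence any $\sum_k j_k y_k \in J(L:_M J)$ maps into $J(g(L):_{M'} J) = \Jintrel{J}{M'}{g(L)}$.

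For the Nakayama property, let $A \subseteq C \subseteq B$ be Artinian $R$-modules with $J((A:_C \m):_B J) \subseteq A$. The goal is to prove $J(C:_B J) \subseteq A$, equivalently $(C:_B J) \subseteq (A:_B J)$. Consider $V := (C:_B J)/(A:_B J)$, which is Artinian as a subquotient of $B$. Since any nonzero Artinian module over a Noetherian local ring has nonzero socle, it suffices to show $(0:_V \m) = 0$. The key colon computation: for $y \in (C:_B J)$, the class $\bar y \in V$ is annihilated by $\m$ iff $\m Jy \subseteq A$; combining this with $Jy \subseteq C$ gives $Jy \subseteq \{c \in C : \m c \subseteq A\} = (A:_C \m)$, i.e., $y \in P := ((A:_C \m):_B J)$. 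The hypothesis $JP \subseteq A$ now forces $y \in (A:_B J)$, so $\bar y = 0$.

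The main (mild) obstacle is the colon-chasing that identifies the socle of $V$ with the image in $V$ of the hypothesis module $P = ((A:_C \m):_B J)$; once this identification is in hand, the Nakayama hypothesis of Definition~\ref{nakayamainterior} translates exactly to $\soc V = 0$, and Artinianness finishes the argument. One could alternatively deduce the Nakayama property via Matlis duality from Proposition~\ref{jcolnakayama} and Proposition~\ref{pr:nakdual}, using the identification $(\Jcolsym J)^\dual = \Jintrelsym J$ (Theorem~\ref{thm:Jcoldual}); but the direct socle argument has the advantage of requiring neither completeness of $R$ nor the prior identification of the dual.
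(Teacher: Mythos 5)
Your proof is correct and follows essentially the same route as the paper's: direct colon computations for the interior-operation axioms and functoriality, and then, for the Nakayama property, identifying the socle of $(C:_B J)/(A:_B J)$ with the image of $P = ((A:_C \m):_B J)$ so that the hypothesis forces it to vanish, after which the ``nonzero Artinian module has nonzero socle'' fact finishes. The only cosmetic differences are that you verify idempotence by proving both inclusions directly from colon identities (the paper proves one and invokes intensivity for the other), and you phrase the socle argument element-wise rather than as the module equality $\Soc\bigl((N:_M J)/(L:_M J)\bigr) = \bigl((L:_M \m J)\cap(N:_M J)\bigr)/(L:_M J) = 0$ that the paper writes out; these are the same computation.
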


 \begin{proof}
 First we prove that $\Jintrelsym J$ is an interior operation. Let $L \subseteq M$ be $R$-modules. Since $J(L:_M J) \subseteq L$, $\Jintrelsym J$ is intensive.
 We have
 \[(L^M_{\Jintrelsym J})^M_{\Jintrelsym J}=J(L^M_{\Jintrelsym J}:_M J) \supseteq J(L:_M J)=L^M_{\Jintrelsym J}.\]
 Hence $\Jintrelsym J$ is idempotent.
 Now let $L \subseteq N \subseteq M$. Since $L:_M J \subseteq N:_M J$, $L^M_{\Jintrelsym J} \subseteq N^M_{\Jintrelsym J}$. So $\Jintrelsym J$ is a closure operation.
 
 Next we prove that $\Jintrelsym J$ is functorial. Suppose $L \subseteq M$ and $\phi:M \rightarrow N$ is an $R$-module homomorphism.  First note that if $x \in (L:_MJ)$, then $Jx \subseteq L$.  Applying $\phi$, we get $J\phi(x)=\phi(Jx) \subseteq \phi(L)$ or $\phi(x) \in (\phi(L):_N J)$.  Any element of $\Jintrel J ML$ has the form $\sum\limits_{i=1}^n j_ix_i$ where $j_i \in J$ and $x_i \in (L:_M J)$ for $1 \leq i \leq n$ and 
\[\phi(\sum\limits_{i=1}^n j_ix_i)=\sum\limits_{i=1}^n j_i \phi(x_i) \in J(\phi(L):_N J)=\Jintrel J N{\phi(L)}.\]
Hence, $\phi(\Jintrel JML) \subseteq \Jintrel J N{\phi(L)}$ completing the proof that $\Jintrelsym J$ is functorial.

Finally we prove that $\Jintrelsym J$ is a Nakayama interior when $(R,\m)$ is Noetherian local.  Let $L \subseteq N \subseteq M$ be Artinian $R$-modules such that $\Jintrel J M {(L:_N \m)} \subseteq L$.  Expanding, we have $L \supseteq J((L :_N \m) :_M J) = J \cdot ((L :_M \m J) \cap (N :_M J))$.  Thus, \[
(L :_M \m J) \cap (N :_M J) \subseteq (L:_M J).
\]
Hence we have \begin{align*}
\Soc \left( \frac{N:_M J}{L :_M J}\right) &= \frac{(L:_M J) :_{(N :_M J)} \m}{L:_M J} = \frac{((L :_M J) :_M \m) \cap (N :_M J)}{L :_M J} \\
&= \frac{(L :_M \m J) \cap (N:_M J)}{L:_M J} = 0
\end{align*}
But a nonzero finitely cogenerated (i.e., Artinian) $R$-module has a nonzero socle (see Proposition~\ref{pr:fcoglam}).  Hence, $\frac{N:_M J} {L:_M J}=0$, so that $(L :_M J) = (N:_M J)$, whence: \[\Jintrel J M L = J(L :_M J) = J(N :_M J) = \Jintrel J M N.
\]  Therefore, $\Jintrelsym J$ is Nakayama.
 \end{proof}

 In order to prove that $\Jintrelsym J={\Jcolsym J}^\dual$, we need the following technical results:

\begin{thm}\label{thm:coldual}
Let $R$ be a complete Noetherian local ring.  Let $L \subseteq M$ be $R$-modules that are both \fg\ or both Artinian, and let $I$ and $J$ be ideals. 
Let $B := M^\vee$ and $A := (M/L)^\vee$, considered as a submodule of $M^\vee$ in the usual way.  Then the following hold: \begin{enumerate}
\item $\left(\frac{M}{JL :_M I}\right)^\vee = I(A:_BJ)$, and
\item $\left(\frac{B}{I(A:_BJ)}\right)^\vee = (JL :_M I)$.
\end{enumerate}
\end{thm}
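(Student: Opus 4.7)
The plan is to prove part (1) directly, then deduce (2) by applying $(-)^\vee$ once more and invoking Matlis reflexivity, which holds for all modules in sight under our hypotheses on $L$ and $M$ (submodules of finitely generated or Artinian modules over the complete local Noetherian ring $R$ are themselves finitely generated or Artinian, respectively, hence reflexive).

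For (1), I would first establish two general Matlis-duality identities, valid for any submodule $K \subseteq M$ in our class and any ideal $I$ of $R$:
\[
(M/IK)^\vee \;=\; \bigl((M/K)^\vee :_B I\bigr)
\qquad\text{and}\qquad
(M/(K :_M I))^\vee \;=\; I\cdot (M/K)^\vee,
\]
where both duals are viewed as submodules of $B = M^\vee$ in the usual way. The first identity is elementary: an element $\varphi \in B$ vanishes on $IK$ iff $i\varphi$ vanishes on $K$ for every $i \in I$, i.e.\ iff $I\varphi \subseteq (M/K)^\vee$. Applying it with $K = L$ and $I = J$ gives the intermediate formula $(M/JL)^\vee = (A :_B J)$.

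The second identity is where the main obstacle lies. The containment $I \cdot (M/K)^\vee \subseteq (M/(K :_M I))^\vee$ is immediate from the definitions: for $i \in I$, $\varphi \in (M/K)^\vee$, and $m \in (K :_M I)$, one has $(i\varphi)(m) = \varphi(im) = 0$. For the reverse inclusion I would use that $I = (i_1,\dots,i_n)$ is finitely generated (as $R$ is Noetherian) and consider the $R$-linear map
\[
\mu\colon M/K \;\longrightarrow\; (M/K)^n, \qquad \bar m \;\longmapsto\; (\overline{i_1 m},\dots,\overline{i_n m}),
\]
whose kernel is exactly $(K :_M I)/K$. Any $\psi \in (M/(K :_M I))^\vee$, regarded as a map $M/K \to E$ that kills $(K :_M I)/K$, factors through $\im(\mu) \subseteq (M/K)^n$; by injectivity of $E$ this factorization extends to a map $(M/K)^n \to E$, i.e.\ to an $n$-tuple $(\varphi_1,\dots,\varphi_n) \in ((M/K)^\vee)^n$, and then the computation $\psi(\bar m) = \sum_k \varphi_k(\overline{i_k m}) = \sum_k (i_k\varphi_k)(\bar m)$ shows $\psi = \sum_k i_k\varphi_k \in I\cdot (M/K)^\vee$.

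Combining the two identities, with $K$ replaced by $JL$ in the second, yields (1):
\[
\left(\frac{M}{JL :_M I}\right)^\vee \;=\; I\cdot (M/JL)^\vee \;=\; I(A :_B J).
\]
For (2), I would dualize the short exact sequence $0 \to (JL :_M I) \to M \to M/(JL :_M I) \to 0$ to obtain $0 \to I(A :_B J) \to B \to (JL :_M I)^\vee \to 0$, using (1) to identify the leftmost term; this gives $B/I(A :_B J) \cong (JL :_M I)^\vee$. Applying $(-)^\vee$ one more time and invoking Matlis reflexivity produces $(B/I(A :_B J))^\vee \cong (JL :_M I)^{\vee\vee} = (JL :_M I)$, which is (2).
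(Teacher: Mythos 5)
Your proof is correct and follows essentially the same route as the paper's: you first observe $(M/JL)^\vee = (A:_B J)$ (the paper cites this from an earlier lemma), then use the generators $i_1,\dots,i_n$ of $I$ to build the multiplication map $M/K \to (M/K)^n$ with kernel $(K:_M I)/K$, and deduce $(M/(K:_M I))^\vee = I\cdot(M/K)^\vee$ via exactness of Matlis duality --- which the paper encodes by dualizing an explicit exact-sequence diagram, while you unfold it element-wise by extending a map through $\im(\mu)$ using injectivity of $E$; these are the same argument. Your derivation of (2) from (1) by one more dualization and Matlis reflexivity also matches the paper's.
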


\begin{proof}
For (a), recall that $(M/JL)^\vee = (A:_B J)$ by \cite[Lemma 5.4]{ERGV-chdual}. Say $I = (x_1, \ldots, x_n)$.  Then  we have the following exact sequence \[
\xymatrix{
0 \ar[r]& \frac{JL:_MI}{JL} \ar[r]^j&\frac{M}{JL} \ar[rr]^{\phi=\left(\begin{matrix}
x_1\\ \vdots \\ x_n
\end{matrix}\right)} \ar@{->>}[dr] & & \bigoplus_{i=1}^n \frac{M}{JL}\\
& &  & \frac{M}{JL:_M I} \ar@{^(->}[ur]
}
\]
That is, $M/(JL:_MI)$ is the cokernel of $j$, and hence is isomorphic to the image of $\phi$.

Applying Matlis duality, we have \[
\xymatrix{
0 & \left( \frac{JL:_MI}{JL}\right)^\vee \ar[l] & (A :_B J) \ar[l] && \bigoplus_{i=1}^n (A:_B J) \ar[ll]_{\phi^\vee = (x_1\ \cdots\ x_n)} \ar[dl]\\
& & & \left(\frac{M}{JL:_MI}\right)^\vee \ar[ul]&
}
\]
Hence, $\left(\frac{M}{JL:_MI}\right)^\vee = \im (\phi^\vee) = I(A:_BJ)$.

For (b), then, we have \[
\left(\frac{B}{I(A:_BJ)}\right)^\vee = \left(\frac{M^\vee}{(M / (JL :_MI))^\vee}\right)^\vee = ((JL:_MI)^\vee)^\vee = JL:_MI,
\]
since $JL :_MI$ is a Matlis-dualizable module.
\end{proof}

Now we can prove that $\Jintrelsym J$ is the interior operation dual to $\Jcolsym J$.

\begin{thm}\label{thm:Jcoldual}
Let $R$ be a complete local ring. Then ${\Jcolsym J}^\dual(A,B)=\Jintrel JBA$ for any pair of Artinian or \fg\ $R$-modules $A \subseteq B$.
\end{thm}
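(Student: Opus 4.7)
The plan is to reduce the identity to a direct application of Theorem~\ref{thm:coldual}(a), since that theorem is precisely the computation of a Matlis dual of a colon module of the form $(JL:_M J)$, which is exactly a $J$-basically full closure. Given $A \subseteq B$, I would set $M := B^\vee$ and $L := (B/A)^\vee$, viewed as a submodule of $M$. Under the hypothesis that $A,B$ are both Artinian or both \fg, Matlis duality interchanges these two classes over the complete local ring $R$ and preserves submodule/quotient relations, so $L \subseteq M$ are again both \fg\ or both Artinian, and Matlis reflexivity yields $M^\vee = B$ together with $(M/L)^\vee = A$ as a submodule of $B$.  Thus the pair $(L,M)$ is set up to line up exactly with the notation of Theorem~\ref{thm:coldual}.

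Next, I would unwind the definition of the dual: by Definition~\ref{def:nonresidualdual} combined with the defining formula $\Jcol J X Y = (JX :_Y J)$, we get
\[{\Jcolsym J}^\dual(A,B) \;=\; \left(\frac{B^\vee}{\Jcol J {(B/A)^\vee} {B^\vee}}\right)^{\!\vee} \;=\; \left(\frac{M}{JL :_M J}\right)^{\!\vee}.\]
(This last step can equivalently be phrased via Lemma~\ref{lem:duality}, which says that $(M/p(L,M))^\vee = p^\dual((M/L)^\vee, M^\vee)$ for any pair operation $p$.)  Applying Theorem~\ref{thm:coldual}(a) to the same $L \subseteq M$ with $I = J$ then gives
\[\left(\frac{M}{JL :_M J}\right)^{\!\vee} \;=\; J(A :_B J) \;=\; \Jintrel J B A,\]
and chaining the two displays finishes the proof.

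The argument is therefore genuinely a one-line consequence of the earlier computational lemma, and the only real task is the bookkeeping needed to see that after the substitution $M = B^\vee$, $L = (B/A)^\vee$ the roles of $A$ and $B$ in Theorem~\ref{thm:coldual} match the roles of our $A$ and $B$. The potentially tricky point, and the main (minor) obstacle, is simply verifying the hypothesis that the $L,M$ we substitute are both \fg\ or both Artinian; this is handled by the observation above that Matlis duality swaps the two classes.  One could also prove the statement by applying Theorem~\ref{thm:coldual}(b) to the pair $(L,M)$ and then double-dualizing, or by establishing $\Jintrelsym J^\dual = \Jcolsym J$ directly and invoking the involutivity of $\dual$ from Proposition~\ref{pr:dualityprops}(\ref{it:bidual}); both routes are essentially the same computation rephrased.
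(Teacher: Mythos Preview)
Your proposal is correct and follows essentially the same approach as the paper's own proof: set $M=B^\vee$, $L=(B/A)^\vee$, unwind the definition of the dual, and apply Theorem~\ref{thm:coldual}(a) with $I=J$. The only difference is that you include a bit more justification (the Matlis duality swap between Artinian and \fg, and the identification $(M/L)^\vee=A$) than the paper writes out explicitly.
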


\begin{proof}
Let $A \subseteq B$ be $R$-modules and set $M=B^\vee$ and $L=\left(B/A\right)^{\vee} \subseteq M$. By definition,
\[{\Jcolsym J}^\dual(A,B)=\left(\frac{B^\vee}{
\Jcol J{\left(\left(\frac{B}{A}\right)^\vee\right)}{B^\vee}
} \right)^\vee.
\]
Substituting, we get
\[\left(\frac{M}{
\Jcol JL{M}
} \right)^\vee
=\left(
\frac{M}{JL:_M J}
\right)^\vee=J(A:_B J)=\Jintrel JBA,
\]
by Theorem \ref{thm:coldual} with $I=J$.
\end{proof}

We include an example of some $J$-basically empty interiors of ideals in a two dimensional regular local ring.  

\begin{example}\label{ex:beintid}
Let $R=k[[x,y]]$ with $\m =(x,y)$ and $I=(x^3,y^3)$. The $\m$-basically empty interior of $I$ in $R$ is $\Jintrel \m RI=(x^4,x^3y,xy^3,y^4)$.  However, $\Jintrel {{\m^2}}{R}{I}=\m^4$. 
\end{example}

We now give an example of the $\m$-basically full closure of a submodule of the injective hull of the residue field again noting the duality with the basically empty interior of an ideal in the ring.

\begin{example}
\label{ex:basfullinE}
Let $R=k[[x,y]]$ with $\m =(x,y)$.  
We know 
$E=E_R(k)$ is isomorphic to the $R$-module of polynomials in $k[x^{-1},y^{-1}]$  
where the action of a monomial $x^ny^m$ is given by 
\[
x^ny^m \cdot x^{-r}y^{-s}=\begin{cases} 0 &\text{ if } n > r \text{ or } m > s\\
x^{-(r-n)}y^{-(s-m)} &\text{ if } n\leq  r \text{ and } m \leq s.\\
\end{cases}\]
Consider 
\[N=kx^{-1}y^{-1}+kx^{-1}y^{-2}+kx^{-1}y^{-3}+kx^{-2}y^{-1}+kx^{-3}y^{-1}=(0:_E (x^3,xy,y^3)).\]
First note that $\m N=kx^{-1}y^{-1}+kx^{-1}y^{-2}+kx^{-2}y^{-1}=(0:_E \m^2)$.  Then 
\[\Jcol \m N E=(\m N:_E\m)=N+kx^{-2}y^{-2}.\]
We have \[\left(\displaystyle\frac{E}{\Jcol \m N E}\right)^{\vee}=\m^3=\m((x^3,xy,y^3):_R\m)=\Jintrel \m R {(x^3,xy, y^3)}. \]
Note that $(x^3,xy,y^3)=\Jcol {\m}{(x^3,xy,y^3)}R$ is $\m$-basically full but not $\m$-basically empty.
\end{example}

\section{Cogenerators and basically empty modules}
\label{sec:cogen}

We give a dual to the notion of basic fullness from \cite{HRR-bf}; the main result is Theorem \ref{thm:be}. In order to do this, we use the concept of finite cogeneration. We end the section by giving a criterion for basic emptiness of a submodule $A \subseteq B$ in terms of submodules $C$ which are covered by $A$.

From now until after Definition \ref{def:basicemptiness}, 
 $R$ will be an \emph{associative} ring (and not necessarily commutative, Noetherian, or local).  All modules will be left $R$-modules.

\begin{defn}\label{def:vamcogen}
(See \cite{Vam} or \cite[19.1-19.2 and Exercise 19.7]{Lam99}) A module $M$ is called \emph{\fcog} if there is a finite list (possibly with repetition) of simple $R$-modules $S_1, \ldots, S_t$ such that $E(M) \cong \bigoplus_{i=1}^t E(S_i)$.

We use $\Soc M$ to denote the \emph{socle} of a module $M$, defined to be the sum of all the simple submodules of $M$.
\end{defn}

Here are some equivalent definitions and useful properties of finite cogeneration:

\begin{prop}\label{pr:fcoglam}  For any associative ring $R$ and $R$-module $M$.
\begin{enumerate}
    \item \cite[19.1(3)]{Lam99} $M$ is \fcog\ if and only if $\Soc M$ is \fg\ and $M$ is an essential extension of $\Soc M$. 
    \item \cite[Exercise 19.8]{Lam99} If $R$ is a commutative Noetherian ring, then $M$ is \fcog\ if and only if it is Artinian. 
    \item \cite[Exercise 19.2]{Lam99} If $M$ is \fcog, then so are submodules of $M$ and essential extensions of $M$. 
    \item \cite[Exercise 19.5]{Lam99} A finite direct sum of modules $\sum_{i=1}^n M_i$ is \fcog\ if and only if $M_i$ is \fcog\ for all $1 \le i \le n$. 
\end{enumerate}
\end{prop}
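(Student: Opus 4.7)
The plan is to handle each of the four items in turn; since they are standard and all appear as results or exercises in Lam's textbook \cite{Lam99}, the cleanest approach is simply to cite those references directly. For a self-contained write-up, I would sketch the arguments below.

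For (1), the key identity to establish is $\Soc M = \Soc E(M)$: every simple submodule $S \subseteq E(M)$ intersects $M$ nontrivially by essentiality, and since $S$ is simple that intersection equals $S$, forcing $S \subseteq M$. Once this is in hand, the biconditional is routine in each direction. If $E(M) = \bigoplus_{i=1}^t E(S_i)$ then $\Soc E(M) = \bigoplus_{i=1}^t S_i$ is \fg\ and essential in $E(M)$, and restriction gives the same for $M$. Conversely, a \fg\ essential socle decomposes as $\Soc M = \bigoplus_{i=1}^t S_i$, whose injective hull is $\bigoplus_{i=1}^t E(S_i)$, which also equals $E(M)$ since $M$ is essential over $\Soc M$.

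Parts (3) and (4) then follow from standard injective-hull manipulations on top of (1). For (3), any submodule $N \subseteq M$ admits $E(N) \hookrightarrow E(M)$ as a direct summand (by injectivity of $E(N)$), and a direct summand of $\bigoplus_{i=1}^t E(S_i)$ is again a finite direct sum of some $E(S_j)$'s by the Krull--Schmidt--Azumaya theorem for indecomposable injectives (each $E(S_i)$ has local endomorphism ring). Essential extensions are immediate because an essential extension of $M$ has the same injective hull as $M$. Part (4) is a direct consequence of the decomposition $E(\bigoplus_{i=1}^n M_i) \cong \bigoplus_{i=1}^n E(M_i)$, which reduces the \fcog\ property to a summand-by-summand check.

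The main obstacle is (2), which is not purely formal. The forward direction rests on Matlis's theorem that $E(R/\m)$ is Artinian for every maximal ideal $\m$ of a commutative Noetherian ring; granting that, $E(M) = \bigoplus_{i=1}^t E(S_i)$ is a finite direct sum of Artinian modules, hence Artinian, and the submodule $M$ inherits this. Conversely, for Artinian $M$, the socle $\Soc M$ is semisimple and Artinian hence \fg, and the DCC forces every nonzero submodule of $M$ to contain a minimal nonzero one (necessarily simple), so $M$ is essential over $\Soc M$; now (1) finishes the job. In the paper's context this Matlis input is where a self-contained proof would need the deepest external fact, so the most economical option is to quote \cite[Exercise 19.8]{Lam99} directly.
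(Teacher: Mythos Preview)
Your proposal is correct and matches the paper's approach exactly: the paper provides no proof of this proposition at all, instead simply citing the relevant parts of \cite{Lam99} in the statement itself. Your suggestion to ``cite those references directly'' is precisely what the paper does, and your additional self-contained sketches (which are accurate) go beyond what the paper offers.
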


Next we frame the notion of finite cogeneration of a module in terms of $R$-linear maps to injective hulls of simple $R$-modules. This works in a level of generality similar to the discussion of finite cogeneration in \cite{ERGV-chdual}.

\begin{defn}\label{def:cogset}
Let $M$ be an $R$-module.  A (finite) \emph{cogenerating set} for $M$ is a list of $R$-linear maps $g_i: M \rightarrow E(S_i)$, $1\leq i \leq t$ for some nonnegative integer $t$, such that each $S_i$ is a simple $R$-module, and such that $\bigcap_{i=1}^t \ker g_i=0$.

We say that a cogenerating set for $M$ is \emph{minimal} if for all $1\leq j \leq t$, $g_1, \ldots, \widehat{g_j}, \ldots, g_t$ is \emph{not} a cogenerating set for $M$.  That is, for each $1\leq j \leq t$, we have $\bigcap_{i\neq j} \ker g_i \neq 0$.
\end{defn}

\begin{lemma}\label{lem:fcogequiv}
Let $M$ be an $R$-module.  Let $S_1, \ldots, S_t$ be a finite list of (not necessarily distinct) simple $R$-modules. The following are equivalent: \begin{enumerate}
    \item There is an injective $R$-linear map $\alpha: M \into \bigoplus_{i=1}^t E(S_i)$.
    \item There are $R$-linear maps $g_i: M \rightarrow E(S_i)$, $1\leq i \leq t$ that cogenerate $M$.
\end{enumerate}
\end{lemma}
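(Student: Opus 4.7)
The plan is to recognize that this equivalence is essentially the universal property of the direct sum (which is a product in the finite case), so both directions come out of projection/product maps with no real obstacle.

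For $(1) \Rightarrow (2)$, I would take the canonical projection maps $\pi_i: \bigoplus_{j=1}^t E(S_j) \twoheadrightarrow E(S_i)$ and set $g_i := \pi_i \circ \alpha$. Then $x \in \bigcap_{i=1}^t \ker g_i$ means $\pi_i(\alpha(x)) = 0$ for every $i$, which forces $\alpha(x) = 0$, and injectivity of $\alpha$ then gives $x = 0$. Hence $\bigcap_i \ker g_i = 0$, so the $g_i$ form a cogenerating set of the required form.

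For $(2) \Rightarrow (1)$, I would assemble the $g_i$ into a single map using the universal property of the product: define
\[
\alpha: M \to \bigoplus_{i=1}^t E(S_i), \qquad \alpha(x) := (g_1(x), g_2(x), \ldots, g_t(x)).
\]
This is $R$-linear because each $g_i$ is. Its kernel is exactly $\bigcap_{i=1}^t \ker g_i$, which is $0$ by hypothesis, so $\alpha$ is the desired injection.

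The main (and only) subtlety is the observation that $\bigoplus_{i=1}^t E(S_i)$ is a finite direct sum and hence also a direct product, so the tuple map $\alpha$ lands in it; no limit/completeness issues arise. I would not expect any real obstacle, and the proof can be written in a few lines.
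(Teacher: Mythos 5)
Your proof is correct and takes essentially the same approach as the paper: both directions proceed by packaging the $g_i$ into a single map into the finite direct sum (viewed as a product) and observing that the kernel of that map is exactly $\bigcap_i \ker g_i$, and conversely by projecting onto components. The paper writes the same argument more compactly using column-vector notation.
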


\begin{proof}
First assume (2). Let $\alpha:M \to \bigoplus_{i=1}^t E(S_i)$ be given by the column vector $[g_1 g_2 \ldots g_t]^\intercal$. We have
\[
\bigcap_{i=1}^t \ker g_i = \ker \left[ \begin{matrix} g_1 \\ \vdots \\ g_t\end{matrix}\right],
\]
which gives the result.

Now assume (1). Set $g_i$ to be the $i$th entry of $\alpha$, and the result follows.
\end{proof}

\begin{prop}\label{pr:mincogequiv}
Let $M$ be an $R$-module.   Let $S_1, \ldots, S_t$ be a finite list of (not necessarily distinct) simple $R$-modules. The following are equivalent: \begin{enumerate}
    \item\label{it:injess} There is an injective $R$-linear map $M \into \bigoplus_{i=1}^t E(S_i)$, such that the target is an essential extension of the image of $M$.
    \item\label{it:congsimple} $E(M) \cong \bigoplus_{i=1}^t E(S_i)$.
    \item\label{it:mcog} There are $R$-linear maps $g_i: M \rightarrow E(S_i)$, $1\leq i \leq t$ that are a minimal cogenerating set for $M$.
\end{enumerate}
\end{prop}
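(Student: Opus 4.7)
The plan is to prove (1) $\Leftrightarrow$ (2) first, then prove (1) $\Leftrightarrow$ (3), which together will give the full equivalence.

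For (1) $\Leftrightarrow$ (2): Recall that a finite direct sum of injective modules is always injective (no ring hypothesis needed), so $\bigoplus_{i=1}^t E(S_i)$ is injective. If (1) holds, then $\bigoplus_{i=1}^t E(S_i)$ is an injective module containing $M$ as an essential submodule, which by the standard characterization of injective hull (as any essential injective extension) gives $E(M) \cong \bigoplus_{i=1}^t E(S_i)$, yielding (2). Conversely, (2) combined with the fact that $M$ embeds essentially in $E(M)$ immediately gives an essential embedding $M \hookrightarrow \bigoplus E(S_i)$, yielding (1).

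For (1) $\Rightarrow$ (3): Lemma~\ref{lem:fcogequiv} converts the embedding $\alpha$ into cogenerating maps $g_i: M \to E(S_i)$. To verify minimality, fix $j$ and consider $E(S_j)$ as a submodule of $\bigoplus_{i=1}^t E(S_i)$. Essentiality of the embedding forces $\alpha(M) \cap E(S_j) \neq 0$, so there exists $m_j \in M$ with $\alpha(m_j)$ nonzero and concentrated in the $j$-th coordinate; that is, $m_j \neq 0$ and $g_i(m_j) = 0$ for all $i \neq j$. Hence $\bigcap_{i \neq j} \ker g_i \neq 0$, proving minimality of the cogenerating set.

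For (3) $\Rightarrow$ (1): Lemma~\ref{lem:fcogequiv} again produces an injection $\alpha: M \hookrightarrow \bigoplus_{i=1}^t E(S_i)$. To establish essentiality, I will use that each $E(S_i)$ is indecomposable injective, hence \emph{uniform} — every nonzero submodule of $E(S_i)$ is essential in $E(S_i)$. Minimality of the cogenerating set produces, for each $j$, a nonzero $m_j \in \bigcap_{i \neq j} \ker g_i$, so $\alpha(m_j) \in E(S_j)$ with $\alpha(m_j) \neq 0$ by injectivity of $\alpha$; thus $\alpha(M) \cap E(S_j) \neq 0$ for every $j$. Since $E(S_j)$ is uniform, $\alpha(M) \cap E(S_j)$ is essential in $E(S_j)$, and using the fact that an essential submodule of each summand gives an essential submodule of a finite direct sum, $\bigoplus_{j=1}^t (\alpha(M) \cap E(S_j))$ is essential in $\bigoplus_{j=1}^t E(S_j)$. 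Since this is contained in $\alpha(M)$, the embedding $\alpha$ is essential, giving (1).

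The main conceptual obstacle is the direction (3) $\Rightarrow$ (1), where one must upgrade the seemingly weak conclusion $\alpha(M) \cap E(S_j) \neq 0$ (coming from minimality) to full essentiality of $\alpha(M)$ inside $\bigoplus E(S_i)$. The uniformity of indecomposable injectives is the key ingredient that makes the coordinate-wise nontrivial intersection sufficient; without it, one nonzero intersection per summand would not automatically yield essentiality. The remaining steps are essentially formal manipulations of injective hulls and cogenerating maps.
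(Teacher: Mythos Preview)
Your proof is correct. The equivalence (1) $\Leftrightarrow$ (2) and the implication (1) $\Rightarrow$ (3) match the paper's arguments essentially verbatim (the paper runs (2) $\Rightarrow$ (3) instead, but the essentiality trick is the same).

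Where you genuinely diverge is in (3) $\Rightarrow$ (1). The paper proves (3) $\Rightarrow$ (2) by a socle-length count: it lifts $\psi$ to $\widetilde\psi: E(M) \to \bigoplus E(S_i)$, splits off $E(M)$ as a direct summand, and then shows $\coker \widetilde\psi = 0$ by explicitly building simple submodules $N_j \cong S_j$ of $M$ from the minimality witnesses and checking that $\sum N_j$ is an internal direct sum, forcing $\len(\Soc M) \geq t$. Your route is more structural: you observe that each $E(S_j)$ is uniform (being indecomposable injective), so the nonzero intersection $\alpha(M) \cap E(S_j)$ is automatically essential in $E(S_j)$, and then you invoke the standard fact that a finite direct sum of essential submodules is essential. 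This is shorter and avoids any length bookkeeping; the paper's argument, while longer, has the side benefit of exhibiting an explicit copy of $\bigoplus_j S_j$ inside $\Soc M$. Both are clean proofs of the same implication.
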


\begin{proof}
\ (\ref{it:injess} $\iff$ \ref{it:congsimple}): This is standard in the theory of injective modules; see for example \cite[Corollary 3.33]{Lam99}.

\noindent (\ref{it:congsimple} $\implies$ \ref{it:mcog}): For each $j$, let $g_j$ denote the composition 
\[g_j:M \into E(M) \overset{\cong}{\ra} \bigoplus_{i=1}^t E(S_i) \overset{\pi_j}\onto E(S_j),\] where the leftmost map is the inclusion and the rightmost map is the projection $\pi_j$.  Let $\phi: E(M) \overset{\cong}{\ra} \bigoplus_{i=1}^t E(S_i)$ be the given isomorphism.  Since $\bigoplus_{i=1}^t E(S_i)$ is isomorphic to an injective hull for $M$, $\bigcap_{j=1}^t \ker g_j=0$. By symmetry, it is enough to show that $\bigcap_{j=2}^t \ker g_j \neq 0$.  Accordingly, choose a nonzero element $z\in E(S_1)$.  Let $t := \phi^{-1}\left(\left[\begin{matrix} z \\ 0 \\ \vdots \\ 0 \end{matrix}\right]\right) \in E(M)$.  By essentiality, there is some $r\in R$ such that $0 \neq rt \in M$.  Then for any $j\geq 2$, $g_j(rt) = \pi_j\left(\left[\begin{matrix} rz \\ 0 \\ \vdots \\ 0 \end{matrix}\right]\right) = 0$.

\noindent (\ref{it:mcog} $\implies$ \ref{it:congsimple}): Let $\psi := \left[\begin{matrix} g_1 \\ \vdots \\ g_t \end{matrix}\right]: M \ra \bigoplus_{i=1}^t E(S_i)$.  
Let $\widetilde \psi: E(M) \ra \bigoplus_{i=1}^t E(S_i)$ be a lifting, given by the fact that $\bigoplus_{i=1}^t E(S_i)$ is injective.  Moreover, since $E(M)$ is injective as well, we have $\bigoplus_{i=1}^t E(S_i) \cong E(M) \oplus \coker \widetilde \psi$.  Hence all modules in sight have \fg\ (hence finite length) socles.  It is enough to show that the length of $\Soc M$ is at least $t$, since $\coker \widetilde \psi$, if nonzero, will have a nonzero socle (since it is \fcog, and hence an essential extension of its socle).

Accordingly, for each $1\leq j \leq t$, let $0 \neq y_j \in \bigcap_{i\neq j} \ker g_i$.  Then $\psi(y_j) = \left[\begin{matrix} 0 \\ \vdots \\ z_j \\ \vdots \\ 0\end{matrix}\right]$, where $0\neq g_j(y_j) = z_j \in E(S_j)$, and with all other entries of the vector being zero.  By taking multiples, we may assume that in fact $z_j \in S_j \subseteq E(S_j)$. Let $N_j := R y_j$.  Then we have $g_j(N_j)$ is a nonzero submodule of $S_j$, so that by simplicity, we have $g_j(N_j) = S_j$.  Moreover, for any $u\in N_j$ with $g_j(u) = 0$, we have $u \in \bigcap_{k=1}^t \ker g_k = 0$; hence $g_j |_{N_j}$ is injective.  It follows that $N_j \cong S_j$, with the isomorphism being given by the restriction of $g_j$.

Moreover, for each $j$, we have $N_j \cap \sum_{i\neq j} N_i = 0$. To see this, simply note that $N_j \subseteq \bigcap_{i\neq j} \ker g_i$, and that each $N_i \subseteq \ker g_j$ whenever $i\neq j$, so that $\sum_{i\neq j} N_i \subseteq \ker g_j$.  Hence $N_j \cap \sum_{i\neq j} N_i \subseteq \bigcap_{k=1}^t \ker g_k = 0$.  It follows that $\sum_{i=1}^t N_i$ is a direct sum, with each summand isomorphic to the corresponding $S_i$.  Thus $\len(\Soc M) \geq \len(\sum_{i=1}^t N_i) = \sum_{i=1}^t \len(S_i) = t$.
\end{proof}

We now give a dual to basic fullness (see Definition \ref{def:basfull}):

\begin{defn}
\label{def:basicemptiness}
Let $R$ be a ring and let $A \subseteq B$ be left $R$-modules such that $B$ (and hence also $A$) is finitely cogenerated.  We say that $A$ is \emph{\bemp} 
\emph{in} $B$ if for any proper submodule $C \subsetneq A$, no minimal cogenerating set of $B/A$ extends to a minimal cogenerating set of $B/C$.
\end{defn}

We return to the situation where $(R,\m)$ is a commutative Noetherian local ring.

\begin{defn}[{\cite[Definition 6.6]{ERGV-chdual}}]\label{def:cog}
Let $R$ be a Noetherian local ring, $L$ an $R$-module, and $g_1,\ldots,g_t \in L^\vee$. We say that the \textit{quotient of $L$ cogenerated by $g_1,\ldots,g_t$} is $L/\left(\bigcap_i \ker(g_i)\right)$.

We say that $L$ is \textit{cogenerated by $g_1,\ldots,g_t$} if $\bigcap_i \ker(g_i)=0$.

We say that a cogenerating set for $L$ is \textit{minimal} if it is irredundant, i.e., for all $1 \le j \le t$, $\bigcap_{i \ne j} \ker(g_i) \ne 0$.
\end{defn}

\begin{rem}
The question arises: Is the V\'amos notion of \fcog\ given in Definition~\ref{def:cogset} compatible with the notion of cogenerating sets given in Definition~\ref{def:cog}?  
Fortunately, it is:
in this case, the only simple $R$-module is $k$, so Definition~\ref{def:cogset}  specializes to Definition \ref{def:cog}. 
\end{rem}

\begin{rem}
As we did in Section \ref{sec:bfcbei} for basically full, when we say a submodule $L \subseteq M$ is basically empty, we are referring to the above condition involving cogenerators. If we say a submodule $L$ of $M$ is $J$-basically empty, we mean $L=\Jintrel JML$ (see Definition \ref{def:jbei}).
\end{rem}

We recall here some additional results of \cite{HRR-bf} and dualize them in the next theorem.

\begin{thm}\label{thm:bfhrr}
\ \cite{HRR-bf} Let $(R,\m)$ be a Noetherian local ring and let $L\subseteq M$ be finitely generated $R$-modules. \begin{enumerate}
    \item \ [Theorem 2.3] The following are equivalent: \begin{enumerate}
        \item Some minimal generating set of $L$ extends to one of $M$.
        \item Every minimal generating set of $L$ extends to one of $M$.
        \item $\m L = L \cap \m M$
        \end{enumerate}
    \item\ [Corollary 2.5] $L$ is \bfull\ in $M \iff L/\m L$ is \bfull\ in $M/\m L$.
    \item\ [Theorem 2.6] If $L$ is \bfull\ in $M$, then $\len(M/L)<\infty$.
    \item\ [Theorem 2.12] If $\len(M/L) < \infty$, then $L$ is \bfull\ in $M \iff L = (\m L :_M \m)$  $($i.e. $L = \Jcol \m L M)$.
    \item\ [Corollary 2.15] If $\len(M/L)<\infty$, then for each positive integer $n$, $(\m^nL:_M\m^n)$ (i.e. $\Jcol {{\m^n}} L M$) is \bfull\ in $M$.
\end{enumerate}
\end{thm}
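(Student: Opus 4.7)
The plan is to handle the five parts of Theorem~\ref{thm:bfhrr} using (1) as a combinatorial pivot, (4) as the algebraic colon characterization, and to treat (3) as the genuine obstacle.

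For part (1), I would use Nakayama's Lemma. The implication (b)$\Rightarrow$(a) is immediate, so it suffices to prove (a)$\Leftrightarrow$(c). For (a)$\Rightarrow$(c), given $l_1,\ldots,l_n$ minimally generating $L$ that extend to $l_1,\ldots,l_n,m_1,\ldots,m_k$ generating $M$, their images form a $k$-basis of $M/\m M$; for $x=\sum a_i l_i \in L \cap \m M$, reducing modulo $\m M$ forces every $a_i \in \m$, hence $x \in \m L$. For (c)$\Rightarrow$(b), the equality $\m L = L \cap \m M$ yields an injection $L/\m L \hookrightarrow M/\m M$, so any $k$-basis of $L/\m L$ extends to one of $M/\m M$, which Nakayama lifts to an extension of the corresponding minimal generating sets.

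Parts (4) and (2) then follow from (1). For the forward direction of (4), set $N := (\m L:_M \m)$: then $\m N \subseteq \m L$, so $L \cap \m N = \m L$, and basic fullness forces $N = L$. For the reverse, given $L = (\m L:_M \m)$ and $N \supsetneq L$, finite length of $M/L$ makes $\Soc(N/L)$ nonzero, so choose $y \in N \setminus L$ with $\m y \subseteq L$; were $\m y \subseteq \m L$, then $y \in (\m L:_M \m) = L$, a contradiction, so $\m L \subsetneq L \cap \m N$, and (1) prevents extension of generators. For (2), the correspondence between minimal generators of $L$ and $k$-bases of $L/\m L$ (and similarly for $M$), together with the identification $(0:_{M/\m L} \m) = (\m L:_M \m)/\m L$, shows that both ``basically full'' properties reduce to the same colon equation.

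The main obstacle is part (3), which requires deducing $\len(M/L)<\infty$ from basic fullness alone. The idea is to use Artin--Rees. For each $n$, set $N_n := L + \m^n M$. Assume $L \subsetneq N_n$ for all $n$; then basic fullness combined with (1) and the modular law yield
\[ \m L \;\subsetneq\; L \cap \m N_n \;=\; L \cap (\m L + \m^{n+1}M) \;=\; \m L + (L \cap \m^{n+1}M), \]
so $L \cap \m^{n+1}M \not\subseteq \m L$ for every $n$. On the other hand, Artin--Rees provides an integer $c$ with $L \cap \m^n M \subseteq \m^{n-c} L$ for all $n \geq c$, so $L \cap \m^{n+1}M \subseteq \m L$ once $n \geq c$. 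This contradiction forces $N_n = L$ for some $n$, i.e., $\m^n M \subseteq L$; since $M/L$ is finitely generated and annihilated by a power of $\m$, we conclude $\len(M/L) < \infty$. Finally, part (5) is a formal consequence of (4): setting $N := (\m^n L:_M \m^n)$, we have $L \subseteq N$ so $\len(M/N) \leq \len(M/L) < \infty$, and it remains to check $N = (\m N:_M \m)$. The inclusion $\subseteq$ is automatic; conversely, if $\m x \subseteq \m N$, then $\m x \subseteq \m(\m^n L:_M \m^n) \subseteq \m^n L$, hence $\m^n x = \m^{n-1}\cdot \m x \subseteq \m^{2n-1}L \subseteq \m^n L$, giving $x \in N$. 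Applying (4) to $N$ shows $N$ is basically full in $M$.
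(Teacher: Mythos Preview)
The paper does not give its own proof of Theorem~\ref{thm:bfhrr}; it merely cites the results from \cite{HRR-bf} (Theorems 2.3, 2.6, 2.12 and Corollaries 2.5, 2.15 there) in order to dualize them in Theorem~\ref{thm:be}. So there is nothing to compare against, and your proposal is effectively a self-contained proof sketch of the cited results. With one exception noted below, your arguments are correct and follow the standard lines of \cite{HRR-bf}: parts (1), (3), and (4) are fine, and your Artin--Rees argument for (3) is exactly the right idea.

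There is a genuine slip in your part (5). You claim that $\m N = \m(\m^n L:_M \m^n) \subseteq \m^n L$, but this inclusion is false in general (e.g.\ $R=k[[t]]$, $M=R$, $L=(t^2)$, $n=2$ gives $N=(t^2)$, $\m N=(t^3)\not\subseteq (t^4)=\m^2 L$). The correct computation is one step longer: from $\m x \subseteq \m N$ you get
\[
\m^n x \;=\; \m^{n-1}(\m x) \;\subseteq\; \m^{n-1}(\m N) \;=\; \m^n N \;=\; \m^n(\m^n L:_M \m^n) \;\subseteq\; \m^n L,
\]
hence $x \in N$. This repairs the argument. Your sketch for (2) is also a bit muddled: the phrase ``reduce to the same colon equation'' suggests you are invoking (4), which would require $\len(M/L)<\infty$ and hence (3), creating a circularity in your ordering. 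The clean route is to use (1) directly: for $N$ with $L\subsetneq N\subseteq M$, the condition $\m L = L\cap \m N$ is equivalent to $(L\cap \m N)/\m L = 0$, i.e.\ $(L/\m L)\cap \m(N/\m L)=0 = \m(L/\m L)$, which is exactly condition (1)(c) for the pair $L/\m L \subseteq N/\m L$ inside $M/\m L$.
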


We have the following dual theorem.

\begin{thm}\label{thm:be}
Let $(R,\m)$ be a Noetherian local ring and let $A \subseteq B$ be Artinian $R$-modules. \begin{enumerate}
    \item\label{it:cogensart} The following are equivalent: \begin{enumerate}
        \item\label{it:cogensart1} Some minimal cogenerating set of $B/A$ extends to one of $B$.
        \item\label{it:cogensart2} Every minimal cogenerating set of $B/A$ extends to one of $B$.
        \item\label{it:cogensart3} $(A :_B \m) = (0 :_B \m) + A$.
    \end{enumerate}
    \item $A$ is \bemp\ in $B \iff A$ is \bemp\ in $(A :_B \m)$.
    \item If $A$ is \bemp\ in $B$, then $\len(A)<\infty$.
    \item If $\len(A)<\infty$, then $A$ is \bemp\ in $B \iff A = \m (A:_B \m)$  $($i.e. $A = \Jintrel \m B A)$.
    \item If $\len(A) < \infty$, then for each positive integer $n$, $\m^n(A:_B\m^n)$ (i.e. $\Jintrel {{\m^n}} B A$) is \bemp\ in $B$.
\end{enumerate}
\end{thm}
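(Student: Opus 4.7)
The plan is to derive Theorem \ref{thm:be} directly from Theorem \ref{thm:bfhrr} via Matlis duality, treating it as the systematic transfer of the basic-fullness theory for finitely generated submodules of finitely generated modules to the basic-emptiness theory for Artinian submodules of Artinian modules. Since Artinian $R$-modules are canonically $\hat R$-modules and none of the conditions in the statement is affected by base change to $\hat R$, I may assume $R$ is complete. Set $M := B^\vee$ and $L := (B/A)^\vee \subseteq M$, so that $L$ and $M$ are finitely generated $R$-modules with $M/L \cong A^\vee$.

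The heart of the argument is a dictionary. First, submodules of $M$ correspond bijectively to submodules of $B$ via the Matlis perp $N \mapsto N^\perp := \{b \in B : f(b) = 0 \text{ for all } f \in N\}$; in particular $L^\perp = A$, and the formula used in the proof of Proposition \ref{pr:nakdual} (from \cite[Lemma 5.4]{ERGV-chdual}) gives $(\m L)^\perp = A :_B \m$ and $(\m M)^\perp = (0 :_B \m)$. Second, a list $g_1, \dotsc, g_t : N \to E$ cogenerates an Artinian module $N$ if and only if the $g_i$, viewed as elements of $N^\vee$, generate $N^\vee$, with minimality preserved in either direction (Proposition \ref{pr:mincogequiv}). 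Hence a minimal cogenerating set of $B/A$ (resp.\ of $B$) is the same datum as a minimal generating set of $L$ (resp.\ of $M$), and for any $A' \subsetneq A$ the extension of minimal generating sets of $L$ inside $(B/A')^\vee \supsetneq L$ matches the extension of minimal cogenerating sets of $B/A$ inside $B/A'$. In particular, $A$ is \bemp\ in $B$ in the sense of Definition \ref{def:basicemptiness} if and only if $L$ is \bfull\ in $M$ in the sense of Definition \ref{def:basfull}.

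With this dictionary, each part of Theorem \ref{thm:be} reduces to the corresponding item of Theorem \ref{thm:bfhrr}. For (1), Theorem \ref{thm:bfhrr}(1) yields the equivalence of (a) and (b) and of both with $\m L = L \cap \m M$; using the perp identifications above, together with Lemma \ref{lemma:dualofintersectionissum} (which gives $(L \cap \m M)^\perp = A + (0 :_B \m)$), the equality $\m L = L \cap \m M$ translates to $A :_B \m = A + (0 :_B \m)$, yielding (c). For (2), the perp correspondence applied to $\m L \subseteq L \subseteq M$ sends $L/\m L \subseteq M/\m L$ to $A \subseteq A :_B \m$, so Theorem \ref{thm:bfhrr}(2) gives (2). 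For (3), $M/L \cong A^\vee$ has finite length iff $A$ does, so Theorem \ref{thm:bfhrr}(3) gives (3). For (4) and (5), Theorem \ref{thm:coldual} with $I = J = \m^n$ identifies the submodule of $B$ perp to $\Jcol{\m^n}{L}{M} = \m^n L :_M \m^n$ as $\Jintrel{\m^n}{B}{A} = \m^n(A :_B \m^n)$, so $L = \Jcol{\m^n}{L}{M}$ iff $A = \Jintrel{\m^n}{B}{A}$, and the basic-full/basic-empty dictionary then carries (4) and (5) over verbatim from the corresponding items of Theorem \ref{thm:bfhrr}.

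The main obstacle is the careful verification of the second half of the dictionary: that the extension property for cogenerating sets really does dualize to the extension property for generating sets, including minimality. This requires tracking simultaneously, on the $B$-side, which functionals on $B$ restrict to a minimal cogenerating set on $B/A$, and on the $M$-side, which generators of $M$ lie in $L$, and then reconciling minimality via a Nakayama-style reduction modulo $\m$. Once this bookkeeping is done, the remainder of the argument is formal.
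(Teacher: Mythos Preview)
Your proposal is correct and follows essentially the same approach as the paper: reduce to the complete case, set $M=B^\vee$ and $L=(B/A)^\vee$, and transfer each item of Theorem~\ref{thm:bfhrr} via the Matlis dictionary between (minimal) generating sets and (minimal) cogenerating sets, using Lemma~\ref{lemma:dualofintersectionissum} and Theorem~\ref{thm:coldual} for the specific colon/product identifications. The paper isolates your ``basic-empty $\Leftrightarrow$ basic-full'' dictionary as a separate Lemma~\ref{lem:bfbe} and cites \cite[Lemma~6.8]{ERGV-chdual} for the cogenerator/generator correspondence rather than Proposition~\ref{pr:mincogequiv}, but the substance is the same.
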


Before proving this result, we 
need another lemma.  In order to use Matlis duality we pass to the completion.

\begin{lemma}\label{lem:bfbe}
Let $A \subseteq B$ be Artinian $R$-modules, where $(R,\m)$ is a Noetherian local ring.  Then $A$ is \bemp\ in $B \iff (B/A)^\vee$ is \bfull\ in $B^\vee$ as $\hat{R}$-modules.
\end{lemma}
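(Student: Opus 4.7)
The plan is to use Matlis duality over $\hat R$ to translate the defining condition of basic emptiness for $A \subseteq B$ directly into the defining condition of basic fullness for $(B/A)^\vee \subseteq B^\vee$. Since $B$ is an Artinian $R$-module, and hence an Artinian $\hat R$-module, both $B$ and $A$ are Matlis-reflexive, and $B^\vee$ is a finitely generated $\hat R$-module. In particular every $\hat R$-submodule of $B^\vee$ is automatically finitely generated, so the finite-generation clause in Definition \ref{def:basfull} imposes no additional constraint on candidate supermodules.

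I would first set up the submodule correspondence. The assignment $C \mapsto (B/C)^\vee$ gives an inclusion-reversing bijection between submodules $C \subseteq B$ and $\hat R$-submodules of $B^\vee$ (via Matlis reflexivity), and this restricts to a bijection between proper submodules $C \subsetneq A$ and submodules $N$ satisfying $(B/A)^\vee \subsetneq N \subseteq B^\vee$, given by $N := (B/C)^\vee$.

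The technical core is a bijection between (minimal) cogenerating sets of $B/A$ and (minimal) generating sets of $(B/A)^\vee$. An $R$-linear map $g_i\colon B/A \to E(k)$ is simply an element of $(B/A)^\vee$. Applying Matlis duality to the map $(g_1,\dotsc,g_t)^{\intercal}\colon B/A \to E(k)^{\oplus t}$, the condition $\bigcap_i \ker g_i = 0$ (injectivity of that map) is equivalent, by exactness of $(-)^\vee$, to surjectivity of the dual map $\hat R^{\oplus t} \cong (E(k)^{\oplus t})^\vee \to (B/A)^\vee$ that sends the standard basis to $\{g_1,\dotsc,g_t\}$; this says precisely that the $g_i$ generate $(B/A)^\vee$. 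Minimality on both sides is detected by the common numerical invariant $t = \dim_k \Soc(B/A)$ via Proposition \ref{pr:mincogequiv}. The natural inclusion $(B/A)^\vee \hookrightarrow (B/C)^\vee$ induced by Matlis-dualizing the surjection $B/C \twoheadrightarrow B/A$ sends each $g_i$ to its composition with that surjection; hence extending $\{g_1,\dotsc,g_t\}$ to a minimal cogenerating set of $B/C$ matches precisely with extending it to a minimal generating set of $(B/C)^\vee = N$.

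Combining these bijections, the statement ``some minimal cogenerating set of $B/A$ extends to a minimal cogenerating set of $B/C$'' for some proper $C \subsetneq A$ translates into ``some minimal generating set of $(B/A)^\vee$ extends to a minimal generating set of $N$'' for some $(B/A)^\vee \subsetneq N \subseteq B^\vee$; taking negations yields the claimed equivalence. The main obstacle is bookkeeping rather than substance: one must verify carefully that the cogenerator/generator correspondence preserves both the extension structure and minimality, but this is built into Proposition \ref{pr:mincogequiv} together with the exactness of Matlis duality.
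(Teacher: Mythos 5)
Your proof is correct and takes essentially the same route as the paper: both translate the definitions through Matlis duality over $\hat R$, using the inclusion-reversing bijection between submodules $C\subsetneq A$ and proper $\hat R$-supermodules $N=(B/C)^\vee$ of $L=(B/A)^\vee$ together with the correspondence between (minimal) cogenerating sets of $B/C$ and (minimal) generating sets of $(B/C)^\vee$. The only cosmetic difference is that you re-derive the generator/cogenerator dictionary in-line where the paper cites \cite[Lemma 6.8]{ERGV-chdual}, and you phrase the equivalence as a direct translation-plus-negation rather than two contrapositive implications.
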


\begin{proof}
Let $L := (B/A)^\vee \subseteq B^\vee =: M$.

Suppose $A$ is \bemp\ in $B$.  Let $N$ be an $\hat R$-submodule of $M$ that properly contains $L$.  Let $C := (M/N)^\vee$.  Then $C$ is a proper submodule of $A$.  Let $g_1, \ldots, g_t$ be a minimal generating set of $L$.  Suppose it extends to a minimal generating set $g_1, \ldots, g_t, g_{t+1}, \ldots, g_s$ of $N$.  Then these same elements form a minimal cogenerating set of $B/A$ that extends to one of $B/C$, contradicting the basic emptiness of $A$ in $B$.  Thus, $L$ is basically full in $M$.

Conversely, suppose $L$ is \bfull\ in $M$ as $\hat R$-modules.  Let $C$ be a proper $R$- (hence $\hat R$-)submodule of $A$.  Let $N := (B/C)^\vee$.  Then $N$ is an $\hat R$-submodule of $M$ that properly contains $L$.  Suppose $g_1, \ldots, g_t: B \ra E$ are a minimal cogenerating set of $B/A$.  Then they are a minimal generating set for $(B/A)^\vee = L$, so by basic fullness, they cannot extend to a minimal generating set for $(B/C)^\vee = N$, i.e., a minimal cogenerating set for $B/C$.  Thus, $A$ is \bemp\ in $B$.
\end{proof}

\begin{proof}[Proof of Theorem~\ref{thm:be}]
First recall that $(A :_B\m) = (A :_B \hat{\m})$, which follows from the fact that $B/A$ is an $\hat R$-module and Hom-tensor adjointness.  Also, for any Artinian module $C$, we have $\m C = \hat{\m}C$ by similar considerations.  Hence, we may assume for this proof that $(R,\m)$ is \emph{complete}.

For the rest of the proof, let $M := B^\vee$, and let $L := (B/A)^\vee$, considered as a submodule of $M$.

To prove (\ref{it:cogensart}), recall by \cite[Lemma 6.8]{ERGV-chdual} that (minimal) cogenerating sets of $B$ (resp. $B/A$) are the same as (minimal) generating sets of $M$ (resp. $L$).  Then by the first part of Theorem~\ref{thm:bfhrr}, it is enough to show that $\m L = L \cap (\m M)$ if and only if $(A :_B \m) = (0 :_B \m) + A$.  But $(A :_B \m) = (M/\m L)^\vee$, and by Lemma~\ref{lemma:dualofintersectionissum} \[
\left(\frac{M}{L \cap \m M}\right)^\vee = \left(\frac ML\right)^\vee + \left(\frac M {\m M}\right)^\vee = A + (0 :_B \m).
\]

To prove (2), we have that $A$ is \bemp\ in $B \iff$ (by Lemma~\ref{lem:bfbe}) $L$ is \bfull\ in $M \iff$ (by Theorem~\ref{thm:bfhrr}) $L/\m L$ is \bfull\ in $M/\m L \iff \left(\frac{M/\m L}{L/\m L}\right)^\vee = (M/L)^\vee=A$ is \bemp\ in $(M/\m L)^\vee = (A :_B \m)$.

To prove (3), we have by Lemma~\ref{lem:bfbe} that $L$ is \bfull\ in $M$, and hence by Theorem~\ref{thm:bfhrr} that $\len(M/L)<\infty$.  But (finite) length is preserved by Matlis duality, so we also have $\len(A) = \len((M/L)^\vee)<\infty$.

To prove (4).  We have $A = (M/L)^\vee$, so $\len(A) = \len((M/L)^\vee) = \len(M/L)<\infty$.  By Lemma~\ref{lem:bfbe} and Theorem~\ref{thm:bfhrr}, it is enough to show the equivalence: $A=\m(A:_B \m) \iff L = (\m L :_M \m)$.  But that in turn follows from Theorem \ref{thm:coldual}.

Finally we prove (5).  As in the proof of (4) above $A=(M/L)^\vee$.  Set $A_n=\m^n(A:_B\m^n)$ and $L_n=(\m^nL:_M\m^n)$. Thus by Lemma~\ref{lem:bfbe} and Theorem~\ref{thm:bfhrr}, it is enough to show the equivalence $A_n$ \bemp\ if and only if $L_n$ \bfull.  But as in (4), this follows from
Theorem \ref{thm:coldual}.
\end{proof}

As a result of Theorem \ref{thm:be}, we see that basically empty and $\m$-basically empty share the same relationship observed in Remark \ref{rem:twobasfull} between basically full and $\m$-basically full.

\begin{cor}
Let $(R,\m)$ be a Noetherian local ring, and $A \subseteq B$ Artinian $R$-modules such that $A$ is finite-length. Then $\Jintrel \m BA$ is the largest submodule of $A$ that is basically empty in $B$.
\end{cor}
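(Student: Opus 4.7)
The plan is to verify the corollary in three steps: first that $\Jintrel{\m}{B}{A}$ is a submodule of $A$, second that it is basically empty in $B$, and third that it contains every basically empty submodule of $B$ that sits inside $A$. All three assertions will follow directly from Theorem~\ref{thm:be}, and no new ingredients should be needed.

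First, by the intensivity of the interior operation $\Jintrelsym{\m}$ (established in Proposition~\ref{pr:jbenakayama}), we have $\Jintrel{\m}{B}{A} = \m(A:_B\m) \subseteq A$; this can also be seen directly from the definition of the colon. Next, because $\len(A)<\infty$, Theorem~\ref{thm:be}(5) applied with $n=1$ gives that $\m(A :_B \m) = \Jintrel{\m}{B}{A}$ is basically empty in $B$.

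For the maximality, suppose $A'$ is a submodule of $A$ that is basically empty in $B$. By Theorem~\ref{thm:be}(3) we have $\len(A')<\infty$, so Theorem~\ref{thm:be}(4) applies and yields $A' = \m(A' :_B \m)$. Since $A' \subseteq A$ gives $(A' :_B \m) \subseteq (A :_B \m)$, we obtain
\[
A' \;=\; \m(A' :_B \m) \;\subseteq\; \m(A :_B \m) \;=\; \Jintrel{\m}{B}{A},
\]
which completes the argument.

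There is no real obstacle here: the content of the corollary is already packaged in parts (3), (4), and (5) of Theorem~\ref{thm:be}, and the proof amounts to stringing these together with the elementary monotonicity of the colon operation.
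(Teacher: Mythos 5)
Your proof is correct and follows essentially the same route as the paper, which cites Theorem~\ref{thm:be}(4). The only minor difference is that you invoke part~(5) with $n=1$ to see that $\Jintrel{\m}{B}{A}$ is itself basically empty, whereas one could also get this from part~(4) together with the idempotence of $\Jintrelsym{\m}$ (Proposition~\ref{pr:jbenakayama}); both are equally valid and the maximality step via monotonicity of the colon is exactly what is intended.
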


\begin{proof}
This follows from part (4) of Theorem \ref{thm:be}.
\end{proof}

These results lead us to the following characterization of principal ideal rings among complete Noetherian local rings:

\begin{prop}
Let $(R,\m)$ be a complete Noetherian local ring. The following are equivalent:
\begin{enumerate}
    \item All nonzero $\m$-primary ideals are basically full.
    \item $\m$ is principal, or equivalently, $R$ is a principal ideal ring.
    \item All finite length submodules of $E$ are basically empty.
    \item $\soc E$ is principally cogenerated.
\end{enumerate}
\end{prop}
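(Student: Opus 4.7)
The plan is to establish the equivalences by combining Matlis duality with a direct computation in principal ideal rings and an Artinian reduction argument.

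First, $(1) \iff (3)$ is immediate from Lemma~\ref{lem:bfbe}: the Matlis bijection $I \leftrightarrow (0:_E I)$ between nonzero $\m$-primary ideals of $R$ and finite-length nonzero submodules of $E$ satisfies $(E/(0:_E I))^\vee = I$, so $(0:_E I)$ is basically empty in $E$ iff $I$ is basically full in $R$.

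For $(2) \Rightarrow (1)$: if $\m = (x)$, then every nonzero ideal of $R$ is $(x^n)$ for some $n \geq 1$ by Cohen's structure theorem, and
\[
(\m(x^n) :_R \m) \;=\; \bigl((x^{n+1}) :_R x\bigr) \;=\; (x^n) + \ann_R(x).
\]
Either $x$ is a nonzerodivisor so $\ann_R(x) = 0$, or $R$ is Artinian uniserial and $\ann_R(x) = \soc R$ is the unique minimal nonzero ideal and thus contained in $(x^n)$; either way $(x^n)$ is basically full. For $(2) \iff (4)$, Matlis duality identifies $(\m/\m^2)^\vee$ with $(0:_E\m^2)/(0:_E\m) = \soc(E/\soc E)$, so $\mu(\m) = \dim_k \soc(E/\soc E)$. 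Hence $\m$ is principal iff this iterated socle of $E$ has $k$-dimension at most one, iff it is cogenerated by a single element in the sense of Definition~\ref{def:cogset}.

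Finally, $(1) \Rightarrow (2)$ is the contrapositive. Assume $\mu(\m) \geq 2$ and pass to the Artinian quotient $\bar R := R/\m^N$ for $N \geq 2$; note $\mu(\bar\m) = \mu(\m) \geq 2$. Any non-basically-full nonzero ideal of $\bar R$ pulls back via $\pi : R \onto \bar R$ to a non-basically-full $\m$-primary ideal of $R$. If $\Soc \bar R$ has $k$-dimension at least $2$, then any $1$-dimensional $k$-subspace $\bar I \subsetneq \Soc \bar R$ satisfies $\bar\m \bar I = 0$ and $(\bar\m \bar I :_{\bar R} \bar\m) = \Soc \bar R \supsetneq \bar I$, so $\bar I$ is not basically full. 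If instead $\bar R$ is Gorenstein Artinian (so $\Soc \bar R$ is one-dimensional, forcing $R$ itself to be Gorenstein Artinian), take $\bar I := (\bar x_1)$ for a minimal generator $\bar x_1$ of $\bar\m$. Gorenstein duality gives $(0:_{\bar R}\bar\m^2) \not\subseteq (\bar x_1)$ (since $\mu(\bar\m) \geq 2$ forces $\ann_{\bar R}(\bar x_1) \not\subseteq \bar\m^2$), and any $\bar z \in (0:_{\bar R}\bar\m^2) \setminus (\bar x_1)$ lies in $(\bar\m \bar I :_{\bar R} \bar\m) \setminus \bar I$: the containment $\bar\m \bar z \subseteq \bar x_1 \bar\m$ follows from $\bar\m \bar z \subseteq \Soc \bar R = (\sigma)$ and the Gorenstein identification $\sigma = \bar x_1 \bar s$ with $\bar s \in \bar\m$. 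The main obstacle is this last Gorenstein subcase, where the explicit witness must be extracted from the second socle $(0:_{\bar R}\bar\m^2)$ and its location outside $(\bar x_1)$ verified using the Gorenstein double-annihilator identity.
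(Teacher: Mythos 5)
Your proof takes a genuinely different route from the paper's, which dispatches $(1)\iff(2)$ by a single citation to \cite[Theorem 7.5]{HRR-bf}. Your $(1)\iff(3)$ and $(2)\iff(4)$ arguments are fine (and $(2)\Rightarrow(1)$ is a nice self-contained computation), but the argument for $(1)\Rightarrow(2)$ contains a genuine gap that invalidates it.

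The problematic step is the claim that ``any non-basically-full nonzero ideal of $\bar R = R/\m^N$ pulls back via $\pi\colon R\onto\bar R$ to a non-basically-full $\m$-primary ideal of $R$.'' This is false, and fails precisely for the socle-subspace ideals you then construct. Take $R=k[[x,y]]$ and $N=2$, so $\bar R = R/\m^2$ and $\Soc\bar R=\m/\m^2$ has dimension $2\geq 2$. Let $\bar I = k\bar x=(\bar x)\subseteq\Soc\bar R$; then $\bar\m\bar I=0$ and $(\bar\m\bar I:_{\bar R}\bar\m)=\Soc\bar R\supsetneq\bar I$, so $\bar I$ is indeed not basically full in $\bar R$. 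However, the preimage is $I=\pi^{-1}(\bar I)=(x)+\m^2=(x,y^2)$, and a direct computation shows $I$ \emph{is} basically full in $R$: we have $\m I=(x,y)(x,y^2)=(x^2,xy,y^3)$, and $z\in(\m I:_R\m)$ forces both the constant and $y$-coefficients of $z$ to vanish, so $(\m I:_R\m)=(x,y^2)=I$. The underlying issue is that $\bar\m\bar z\subseteq\bar\m\bar I$ only gives $\m z\subseteq \m I+\m^N$, which does not yield $\m z\subseteq\m I$, so the witness $\bar z$ does not lift. As a result the contrapositive argument breaks and a correct witness in $R$ itself is needed (for $k[[x,y]]$, $(x^2,y^2)$ works since $xy\in(\m(x^2,y^2):\m)\setminus(x^2,y^2)$, but the general construction is nontrivial and is exactly the content of the cited HRR theorem). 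A secondary, smaller issue: with $N=2$ one has $\dim_k\Soc\bar R\geq\dim_k\m/\m^2=\mu(\m)\geq 2$ automatically, so the Gorenstein subcase you flag as the ``main obstacle'' never actually occurs for $N=2$; the real obstacle is the lifting step, not the Gorenstein computation.
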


\begin{proof}
By \cite[Theorem 7.5]{HRR-bf}, (1) and (2) are equivalent. By Lemma \ref{lem:bfbe}, (1) and (3) are equivalent. By Matlis duality, (2) and (4) are equivalent.
\end{proof}

We conclude by determining a numerical criterion for the basic emptiness of a submodule $A \subseteq B$ in terms of an inequality between the minimal numbers of cogenerators of $B/C$ and $B/A$ for any submodule $C \subseteq A$ that is covered by $A$: 

\begin{defn}
\cite[Definition 2.1]{RRcover}
If $L \subseteq N \subseteq M$ are $R$-modules, we say that $N$ is a \emph{cover} of $L$ (or $L$ is \emph{covered} by $N$) in $M$ if $N/L$ is a simple $R$-module.  In particular, if $(R,\m)$ is a local ring, $N$ is a cover of $L$ in $M$ if $N/L \cong R/\m$.
\end{defn}

Covers are connected to basic fullness, as seen in the following proposition.

\begin{prop}\label{prop:fill}
\cite[Theorem 2.17]{HRR-bf}
Let $(R,\m)$ be a Noetherian local ring and $L \subseteq M$ be finite $R$-modules such that $\len(M/L)< \infty$.  Then $L$ is basically full in $M$ if and only if $\mu(N) \leq \mu(L)$ for all covers $N$ of $L$.
\end{prop}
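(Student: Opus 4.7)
The plan is to translate the combinatorial basic-fullness condition into the module-theoretic inequality appearing in Theorem~\ref{thm:bfhrr}(1), and then to connect it numerically to $\mu(N)$ by a short-exact-sequence argument.

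First I would recall that Theorem~\ref{thm:bfhrr}(1) allows the statement ``no minimal generating set of $L$ extends to one of $N$'' to be replaced by the strict inclusion $\m L \subsetneq L \cap \m N$ whenever $L \subseteq N$ are finitely generated, since the containment $\m L \subseteq L \cap \m N$ is automatic.

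The heart of the argument is a computation of $\mu(N)$ when $N$ is a cover of $L$. In that case $N/L \cong R/\m$, so $\m N \subseteq L$, and tensoring $0 \to L \to N \to R/\m \to 0$ with $R/\m$ produces
\[
L/\m L \to N/\m N \to R/\m \to 0,
\]
whose first map has image $(L + \m N)/\m N \cong L/(L \cap \m N)$. From this one reads off $\mu(N) = \mu(L) + 1 - \dim_k ((L \cap \m N)/\m L)$, so that $\mu(N) \le \mu(L)$ is equivalent to $\m L \subsetneq L \cap \m N$, which by the previous paragraph is equivalent to saying no minimal generating set of $L$ extends to one of $N$.

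For the forward direction, I would specialize the defining property of basic fullness to covers $N$ of $L$ and invoke this equivalence. For the converse, given any $L \subsetneq P \subseteq M$, the finite length of $P/L$ forces $P/L$ to contain a simple submodule, producing a cover $N$ of $L$ with $L \subsetneq N \subseteq P$; the hypothesis together with the computation then yields $\m L \subsetneq L \cap \m N \subseteq L \cap \m P$, and Theorem~\ref{thm:bfhrr}(1) concludes that no minimal generating set of $L$ extends to one of $P$, so $L$ is basically full in $M$. The only computational ingredient is the short-exact-sequence step, and it is routine; I do not anticipate a serious obstacle.
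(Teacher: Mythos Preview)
Your argument is correct. The short-exact-sequence computation giving $\mu(N)=\mu(L)+1-\dim_k\bigl((L\cap\m N)/\m L\bigr)$ is valid (using $\m N\subseteq L$ since $N/L$ is simple), and the translation via Theorem~\ref{thm:bfhrr}(1) works in both directions exactly as you describe. The converse step---passing from an arbitrary $P\supsetneq L$ down to a cover $N$ inside $P$ and then pushing the strict inclusion $\m L\subsetneq L\cap\m N\subseteq L\cap\m P$ back up---is the only place one has to be a little careful, and you have handled it.

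Note, however, that the paper itself gives \emph{no} proof of this proposition: it is simply quoted from \cite[Theorem~2.17]{HRR-bf}. So there is no ``paper's own proof'' to compare against here. Your write-up is a self-contained verification using only the equivalences already recorded in Theorem~\ref{thm:bfhrr}(1); that is a perfectly good way to make the result internal to the paper rather than a black-box citation.
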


\begin{defn}
Let $(R, \m)$ be a Noetherian local ring and $M$ a finitely cogenerated  (i.e., Artinian -- see Proposition~\ref{pr:fcoglam}(2)) $R$-module. We denote the minimal number of cogenerators of $M$ by $\eta (M)$.
\end{defn}

\begin{lemma}\label{lem:covers}
Let $(R,\m)$ be a Noetherian complete local ring and $ C \subset A \subset B$ be Artinian $R$-modules such that $\len(A)< \infty$. Let $M := B^\vee$, $L := (B/A)^\vee$, and $K:=(B/C)^{\vee}$, considered as submodules of $M$. Then $C$ is covered by $A$ if and only if $K$ is a cover of $L$.
\end{lemma}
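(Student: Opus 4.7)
The plan is to apply Matlis duality to the short exact sequence coming from the inclusion $C \subset A$ and reduce the question of $A/C$ being simple to that of $K/L$ being simple. First I would note that because $A$ is finite length (hence so is $C$ and so is $A/C$), all modules in sight of the inclusions under consideration are Matlis-dualizable, and the natural surjection $B/C \twoheadrightarrow B/A$ dualizes to the inclusion $L = (B/A)^\vee \hookrightarrow (B/C)^\vee = K$.

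Next I would apply the Matlis dual functor (which is exact) to the short exact sequence
\[
0 \longrightarrow A/C \longrightarrow B/C \longrightarrow B/A \longrightarrow 0
\]
to obtain
\[
0 \longrightarrow (B/A)^\vee \longrightarrow (B/C)^\vee \longrightarrow (A/C)^\vee \longrightarrow 0,
\]
i.e., $0 \to L \to K \to (A/C)^\vee \to 0$. In particular, $K/L \cong (A/C)^\vee$.

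Now $C$ is covered by $A$ means $A/C$ is simple, i.e.\ $A/C \cong R/\m = k$, and $K$ is a cover of $L$ means $K/L \cong k$. Since $k^\vee = \Hom_R(k,E) \cong k$, we have $(A/C)^\vee \cong k$ if and only if $A/C \cong k$ (one direction is immediate; the converse follows from $M^{\vee\vee} \cong M$ on the finite length category, or directly from the observation that a finite length module whose dual is simple must itself be simple of the same length). Combining with $K/L \cong (A/C)^\vee$ yields the desired equivalence. The only mild subtlety, and the step requiring the most care, is checking that the standard identifications $L \subseteq K \subseteq M$ as submodules of $M = B^\vee$ really do match the kernel-cokernel picture produced by dualizing the short exact sequence above; this amounts to the standard fact (used throughout the paper, e.g.\ in Lemma~\ref{lemma:dualofintersectionissum} and Lemma~\ref{lem:dualitylift}) that $(B/X)^\vee$ considered as a submodule of $B^\vee$ is exactly the annihilator of $X$ under evaluation, so the dualized sequence recovers the inclusion $L \subseteq K$ verbatim.
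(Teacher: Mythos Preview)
Your argument is correct and follows essentially the same route as the paper: both proofs establish $K/L \cong (A/C)^\vee$ (the paper states this directly, you obtain it by dualizing the short exact sequence), then use $(R/\m)^\vee \cong R/\m$ to conclude. Your version is slightly more detailed about the identifications, but the substance is identical.
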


\begin{proof}
$C$ is covered by $A$ if and only if $A/C \cong R/\m$.  As 
\[
K/L=(B/C)^\vee/(B/A)^\vee \cong (A/C)^\vee,
\]
and $(R/\m)^{\vee} \cong R/\m$, we see that $A$ is a cover of $C$ if and only if $K$ is a cover of $L$. 
\end{proof}

\begin{prop}
Let $(R,\m)$ be a Noetherian complete local ring and $A \subseteq B$ be Artinian $R$-modules such that $\len(A)< \infty$.  Then $A$ is basically empty in $B$ if and only if $\eta(B/C) \leq \eta(B/A)$ for all  $C \subseteq A$ such that $A$ is a cover of $C$.
\end{prop}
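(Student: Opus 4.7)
The plan is to translate the statement to its basically-full counterpart via Matlis duality and then invoke Proposition~\ref{prop:fill}. Since the hypotheses on $R$ match those already used throughout the section (complete Noetherian local), all the duality machinery applies.

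First, set $M := B^\vee$ and $L := (B/A)^\vee$, considered as a submodule of $M$ in the usual way. By Lemma~\ref{lem:bfbe}, $A$ is basically empty in $B$ if and only if $L$ is basically full in $M$. Since length is preserved by Matlis duality and $A \cong (M/L)^\vee$, the hypothesis $\len(A) < \infty$ translates to $\len(M/L) < \infty$, which is exactly the length hypothesis required to apply Proposition~\ref{prop:fill} to the pair $L \subseteq M$.

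Second, the correspondence between covers is handled by Lemma~\ref{lem:covers}: submodules $C \subseteq A$ for which $A$ is a cover of $C$ correspond bijectively to submodules $K \supseteq L$ in $M$ for which $K$ is a cover of $L$, via $C \mapsto K := (B/C)^\vee$ (with inverse $K \mapsto C := (M/K)^\vee$). The key numerical observation is that for any Artinian $R$-module $N$, the minimal number of cogenerators $\eta(N)$ equals the minimal number of generators $\mu(N^\vee)$, as used in the proof of Theorem~\ref{thm:be} and \cite[Lemma 6.8]{ERGV-chdual}. Applying this with $N = B/A$ and $N = B/C$ yields $\eta(B/A) = \mu(L)$ and $\eta(B/C) = \mu(K)$.

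Combining these two translations, the inequality $\eta(B/C) \leq \eta(B/A)$ for all $C \subseteq A$ covered by $A$ becomes exactly $\mu(K) \leq \mu(L)$ for all covers $K$ of $L$ in $M$. By Proposition~\ref{prop:fill}, this is equivalent to $L$ being basically full in $M$, which as noted is equivalent to $A$ being basically empty in $B$. I do not anticipate any substantive obstacles: the only thing to be careful about is the bookkeeping that the bijective correspondence of Lemma~\ref{lem:covers} matches the quantifiers on both sides of the inequality, and that the finite length hypothesis transfers correctly so that Proposition~\ref{prop:fill} is actually applicable.
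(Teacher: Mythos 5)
Your proposal matches the paper's own proof essentially step for step: both set $M = B^\vee$, $L = (B/A)^\vee$, $K = (B/C)^\vee$, translate basic emptiness to basic fullness via Lemma~\ref{lem:bfbe}, use Lemma~\ref{lem:covers} for the cover correspondence, identify $\eta$ with $\mu$ under Matlis duality, and conclude by Proposition~\ref{prop:fill}. The only (minor, welcome) difference is that you spell out the transfer of the finite-length hypothesis and the $\eta(B/A)=\mu(L)$ identification a bit more explicitly than the paper does.
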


\begin{proof}
Let $M := B^\vee$ and suppose $C \subseteq A$ is an arbitrary submodule.  Set  $L := (B/A)^\vee$ and $K:=(B/C)^{\vee}$, both considered as a submodules of $M$.  

By Lemma~\ref{lem:bfbe}, $A$ is basically empty in $B$ if and only if $L$ is \bfull\ in $M$.    Applying Proposition \ref{prop:fill}, we know $L$ is \bfull\ in $M$ if and only if for every cover $K$ of $L$, $\mu(K) \leq \mu(L)$.  By Lemma \ref{lem:covers}, $K$ is a cover of $L$ if and only if $C$ is covered by $A$; hence, the criterion $\mu(K) \leq \mu(L)$ for every cover $K$  of $L$ corresponds to $\eta(B/C) \leq \eta(B/A)$ for every $C$ which is covered by $A$.
\end{proof}

\section{Core-hull duality for nonresidual closures and relative interiors}
\label{sec:nonresidualcorehullduality}

In this section we define expansions and hulls for relative interior operation and give a nonresidual version of the core-hull duality from \cite{ERGV-chdual}, thus generalizing the material recalled at the end of Section~\ref{sec:backgroundpairops} of the current paper. We will then be able to study integral hulls (Section \ref{sec:integralhulls}), as well as cores and hulls for basically full closure and basically empty interior (Section \ref{sec:examples}). 

We begin by defining $\int$-expansions and $\int$-hulls for a relative interior $\int$. For definitions relating to absolute interiors from earlier papers, the reader may consult the end of Section~\ref{sec:backgroundpairops}.

\begin{defn}
Let $R$ be an associative (not necessarily commutative) ring and $A \subseteq B$ left $R$-modules.  Let $\int$ be a relative interior operation on a class of pairs of $R$-modules $\cP$ as in Definition \ref{def:pairops}, and assume $(A,B) \in \cP$.
We say $C$ with $A \subseteq C \subseteq B$ is an $\int$-expansion of $A$ in $B$ if $(C,B) \in \cP$ and $A_{\int}^B=C_{\int}^B$.

If $(A,B) \in \cP$, the \emph{$\int$-hull of a submodule $A$ with respect to $B$} is the sum of all $\int$-expansions of $A$ in $B$, or
 \[
\int\hull^B(A):=\sum \{C \mid \int(C) \subseteq A \subseteq C \subseteq B \text{ and } (C,B) \in \cP\}.
\]
\end{defn}

The following is a generalization of \cite[Theorem 6.3]{ERGV-chdual} for relative interiors and their dual closures. 
Note that using the nonresidual closure-interior duality of Section \ref{sec:nonresidualdual} makes the proof of the following result much easier than the proof of the original result in \cite{ERGV-chdual}.

\begin{thm}[{cf. \cite[Theorem 6.3]{ERGV-chdual}}]\label{thm:expreddual}
 Let $R$ be a Noetherian complete local ring.  Let $\int$ be a relative interior operation on pairs $A \subseteq B$ of $R$-modules that are both Noetherian or both Artinian, and let $\cl := \int^\dual$ be its dual closure operation.  There exists an order reversing one-to-one correspondence between the poset of $\int$-expansions of $A$ in $B$ and the poset of $\cl$-reductions of $(B/A)^{\vee}$ in $B^{\vee}$. Under this correspondence, an $\int$-expansion $C$ of $A$ in $B$ maps to $(B/C)^\vee$, a $\cl$-reduction of $(B/A)^\vee$ in $B^\vee$.
\end{thm}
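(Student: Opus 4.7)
The plan is to leverage the nonresidual duality already established in Section~\ref{sec:nonresidualdual}, together with the standard Matlis-duality correspondence between submodules, and to reduce everything to a single application of Lemma~\ref{lem:duality}.

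First I would set up the underlying order-reversing bijection. Via Matlis duality, the poset of submodules $C$ with $A \subseteq C \subseteq B$ is in order-reversing one-to-one correspondence with the poset of submodules $L$ with $L \subseteq (B/A)^\vee$ inside $B^\vee$, via $C \mapsto (B/C)^\vee$, with inverse $L \mapsto (B^\vee/L)^\vee$ (considered as a submodule of $B = B^{\vee\vee}$ in the usual way, which makes sense since $A$ and $B$ are both Noetherian or both Artinian, hence Matlis-dualizable). This is standard and is the underlying reason $(B/C)^\vee \subseteq (B/A)^\vee$ whenever $A \subseteq C$.

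Next I would compare the two closure/interior conditions. By Lemma~\ref{lem:duality} applied to the pair $(C,B)$ with closure $\cl = \int^\dual$ (using Proposition~\ref{pr:dualityprops}\,(\ref{it:bidual}) so that $\int = \cl^\dual$), we have
\[
\bigl((B/C)^\vee\bigr)^{\cl}_{B^\vee} \;=\; \bigl(B / C_{\int}^{B}\bigr)^{\vee},
\]
and likewise for $A$ in place of $C$. Therefore $(B/C)^\vee$ is a $\cl$-reduction of $(B/A)^\vee$ in $B^\vee$, i.e.\ $\bigl((B/C)^\vee\bigr)^{\cl}_{B^\vee} = \bigl((B/A)^\vee\bigr)^{\cl}_{B^\vee}$, if and only if $(B/C_\int^B)^\vee = (B/A_\int^B)^\vee$. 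Since Matlis duality is a faithful contravariant equivalence on dualizable modules, this equality holds if and only if $C_\int^B = A_\int^B$, which is exactly the condition that $C$ is an $\int$-expansion of $A$ in $B$.

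Finally I would note that the bijection $C \mapsto (B/C)^\vee$ is order-reversing by construction (inclusions $C \subseteq C'$ give surjections $B/C \onto B/C'$, which dualize to inclusions $(B/C')^\vee \subseteq (B/C)^\vee$), and both collections are closed under this correspondence by the previous paragraph. I do not anticipate any real obstacle: the proof is essentially a bookkeeping exercise in Matlis duality, and the key technical input (the identification of $\bigl((B/C)^\vee\bigr)^\cl_{B^\vee}$ with $(B/C_\int^B)^\vee$) has already been packaged as Lemma~\ref{lem:duality}. The one place to be careful is to use the hypothesis that $A$ and $B$ are both Noetherian or both Artinian, so that all intermediate modules $C$, $B/C$, and their duals lie in a class on which Matlis duality is involutive and on which $\int$ and $\cl$ are defined.
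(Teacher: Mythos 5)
Your proof is correct and is essentially the same as the paper's: the paper unwinds the definition of $\cl^\dual$ directly while you cite Lemma~\ref{lem:duality} (which is just that definition repackaged), and both then invoke faithfulness of Matlis duality to pass between $C_\int^B = A_\int^B$ and $((B/C)^\vee)^\cl_{B^\vee} = ((B/A)^\vee)^\cl_{B^\vee}$, finishing with the observation that $C \mapsto (B/C)^\vee$ is order-reversing.
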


\begin{proof}
First we show that $C$ is an $\int$-expansion of $A$ in $B$ if and only if $(B/C)^\vee$ is a $\cl$-reduction of $(B/A)^\vee$ in $B^\vee$. $C$ is an $\int$-expansion of $A$ in $B$ if and only if $A \subseteq C \subseteq B$ and $A_{\int}^B = C_{\int}^B$. This occurs if and only if
\[\left(\frac{B^\vee}{\left(\left(\frac{B}{A}\right)^\vee \right)^\cl_{B^\vee} } \right)^\vee=\left(\frac{B^\vee}{\left(\left(\frac{B}{C}\right)^\vee \right)^\cl_{B^\vee} } \right)^\vee.\]
Since the modules in question are Matlis-dualizable and $(B/C)^\vee \subseteq (B/A)^\vee$, this happens if and only if
\[\left(\left(\frac{B}{A}\right)^\vee \right)^\cl_{B^\vee}=\left(\left(\frac{B}{C}\right)^\vee \right)^\cl_{B^\vee}. \]

The correspondence is order-reversing since $C \subseteq D$ if and only if $(B/D)^\vee \subseteq (B/C)^\vee$.
\end{proof}

We can also generalize \cite[Propositions 6.4 and 6.5]{ERGV-chdual} to show that for a relative interior operation $\int$, maximal $\int$-expansions exist and every $\int$-expansion of a submodule is contained in a maximal $\int$-expansion.  The proofs are quite similar to those in \cite{ERGV-chdual} making use of Proposition~\ref{NR Ext},
Proposition~\ref{pr:dualityprops}, and Theorem \ref{thm:expreddual}; hence we do not include them here.

\begin{prop}
\label{prop:maxintexpansions}
(cf. \cite[Proposition 6.4]{ERGV-chdual}) Let $(R, \m, k)$ be a complete Noetherian local ring.
Let $A \subseteq B$ be Artinian $R$-modules and $\int$ a relative Nakayama interior defined on Artinian $R$-modules.  Maximal $\int$-expansions of $A$ exist in $B$.  In fact, if $C$ is an $\int$-expansion of $A$ in $B$, then there is some maximal expansion $D$ of $A$ in $B$ such that $A \subseteq C \subseteq D \subseteq B$.
\end{prop}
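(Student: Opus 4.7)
The plan is to reduce this to the dual statement about minimal $\cl$-reductions via the correspondence in Theorem~\ref{thm:expreddual}. Set $\cl := \int^\dual$. By Proposition~\ref{pr:dualityprops}(\ref{it:intcl}), $\cl$ is a closure operation on the class of finitely generated $R$-modules (since the Matlis duals of Artinian modules over a complete local ring are finitely generated). Since $\int$ is a relative Nakayama interior and $\int = (\cl^\dual)^\dual{}^\dual = \cl^\dual$ by Proposition~\ref{pr:dualityprops}(\ref{it:bidual}), Proposition~\ref{pr:nakdual} then tells us that $\cl$ is a Nakayama closure. This is the key structural ingredient that makes minimal $\cl$-reductions available.

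Now suppose $C$ is an $\int$-expansion of $A$ in $B$. By Theorem~\ref{thm:expreddual}, $(B/C)^\vee$ is a $\cl$-reduction of $(B/A)^\vee$ in $B^\vee$, with $(B/C)^\vee \subseteq (B/A)^\vee \subseteq B^\vee$ all finitely generated. Since $\cl$ is Nakayama, I can apply Proposition~\ref{NR Ext} to obtain a \emph{minimal} $\cl$-reduction $K$ of $(B/A)^\vee$ in $B^\vee$ satisfying $K \subseteq (B/C)^\vee$. Next I will pull $K$ back to a submodule of $B$: by Matlis duality (specifically the bijection between submodules of $B^\vee$ and quotients of $B$, realized concretely via $K \mapsto (B^\vee/K)^\vee \subseteq B$), there is a unique $D \subseteq B$ with $K = (B/D)^\vee$, and the inclusions $K \subseteq (B/C)^\vee \subseteq (B/A)^\vee$ translate to $A \subseteq C \subseteq D \subseteq B$.

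Finally, I use the correspondence in Theorem~\ref{thm:expreddual} once more in the other direction: since $K = (B/D)^\vee$ is a $\cl$-reduction of $(B/A)^\vee$ in $B^\vee$, $D$ is an $\int$-expansion of $A$ in $B$; and because the correspondence is order-reversing and $K$ is a \emph{minimal} $\cl$-reduction, $D$ must be a \emph{maximal} $\int$-expansion. Starting from the trivial $\int$-expansion $C = A$ in particular yields existence of maximal $\int$-expansions.

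The main obstacle I anticipate is a small bookkeeping point rather than a deep difficulty: I must verify cleanly that the correspondence between submodules of $B^\vee$ and submodules of $B$ (via $K \leftrightarrow D$ with $K = (B/D)^\vee$) behaves correctly with respect to the containments we need and that all the relevant pairs lie in whichever collection $\cP$ is implicitly underlying the Nakayama property; this is routine given that $A, B, C$ and their quotients are all Artinian and their duals are all finitely generated over the complete local ring $R$. Once that is set up, the argument is essentially an application of Proposition~\ref{NR Ext} transported through the pair duality of Section~\ref{sec:nonresidualdual}.
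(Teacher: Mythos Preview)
Your proposal is correct and follows essentially the same approach the paper indicates: the paper omits the detailed proof but states it proceeds by combining Proposition~\ref{NR Ext}, Proposition~\ref{pr:dualityprops}, and Theorem~\ref{thm:expreddual}, which is exactly what you do (together with Proposition~\ref{pr:nakdual} to transfer the Nakayama property from $\int$ to $\cl=\int^\dual$). Your handling of the order-reversing correspondence and the Matlis-dual bookkeeping is accurate.
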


\begin{prop}
\label{prop:maxexpansionsNoeth}
(cf. \cite[Proposition 6.5]{ERGV-chdual}) Let $R$ be an associative (i.e. not necessarily commutative) ring with identity.  Let $L\subseteq M$ be (left) $R$-modules, and let $\int$ be a relative interior operation on submodules of $M$.   Let $U$ be an $\int$-expansion of $L$ in $M$.  Assume $M/U$ is Noetherian.  Then there is an $R$-module $N$ with $U \subseteq N \subseteq M$, such that $N$ is a maximal $\int$-expansion of $L$ in $M$.
\end{prop}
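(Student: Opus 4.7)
The plan is to apply Zorn's lemma to the poset
\[
\mathcal{F} := \{N \subseteq M \mid U \subseteq N \subseteq M,\ N \text{ is an } \int\text{-expansion of } L \text{ in } M\},
\]
ordered by inclusion. Since $U$ is itself an $\int$-expansion of $L$ in $M$ by hypothesis, $\mathcal{F}$ is nonempty. The key observation is that each $N \in \mathcal{F}$ contains $U$, so $\mathcal{F}$ corresponds bijectively (via $N \mapsto N/U$) to a nonempty collection of submodules of the quotient $M/U$.

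Next I would invoke the Noetherian hypothesis on $M/U$, which (equivalently to ACC) guarantees that every nonempty collection of submodules of $M/U$ has a maximal element. Pulled back to $\mathcal{F}$, this gives a maximal element $N_0 \in \mathcal{F}$. Equivalently, given any chain $\{N_\lambda\}_{\lambda \in \Lambda}$ in $\mathcal{F}$, the corresponding chain $\{N_\lambda/U\}$ of submodules of $M/U$ must stabilize, so the chain has a maximum element (which is itself in $\mathcal{F}$), and Zorn's lemma applies. Either route produces a maximal element $N_0$ of $\mathcal{F}$.

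Finally I would verify that $N_0$ is actually a maximal $\int$-expansion of $L$ in $M$ without the constraint "containing $U$": if $N'$ is any $\int$-expansion of $L$ in $M$ with $N_0 \subseteq N' \subseteq M$, then since $U \subseteq N_0 \subseteq N'$, we have $N' \in \mathcal{F}$, so $N' = N_0$ by maximality in $\mathcal{F}$. There is essentially no obstacle here — the entire argument is a direct application of Zorn or the maximal-element form of the Noetherian condition, with the only care needed being the bookkeeping reduction to submodules of $M/U$ so that the Noetherian hypothesis can be used.
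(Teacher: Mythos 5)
Your proof is correct and matches what the paper indicates: the authors cite \cite[Proposition 6.5]{ERGV-chdual} and state the argument carries over, and that argument is precisely the reduction to submodules of the Noetherian quotient $M/U$ followed by the maximal-element form of the Noetherian condition (or equivalently Zorn's lemma with chains stabilizing). The one point you handle cleanly that is worth making sure is explicit is step 5, namely that maximality among $\int$-expansions containing $U$ upgrades for free to maximality among all $\int$-expansions of $L$ in $M$, since any larger expansion automatically contains $U$.
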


Using the duality we developed between minimal generating sets and minimal cogenerating sets in \cite{ERGV-chdual}, we get the next result. Its proof follows by using Theorem \ref{thm:expreddual} above in place of \cite[Theorem 6.3]{ERGV-chdual}, with the additional note that \cite[Theorem 2.9]{ERGV-chdual} does not require the closure to be residual.

\begin{prop}\label{prop:maxexpmincogen}
(cf. \cite[Proposition 6.14]{ERGV-chdual}) Let $(R,\m)$ be a Noetherian local ring and $\int$ a Nakayama relative interior on Artinian $R$-modules.  Let $A \subseteq B$ be Artinian $R$-modules.  Suppose that $C \subseteq D$ are $\int$-expansions of $A$ in $B$, with $D$ a maximal $\int$-expansion.  Then any minimal cogenerating set of $B/D$ extends to a minimal cogenerating set for $B/C$.
\end{prop}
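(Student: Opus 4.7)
The plan is to use Matlis duality together with Theorem~\ref{thm:expreddual} to translate the statement into the analogous one for minimal generating sets of $\cl$-reductions, where $\cl := \int^\dual$, and then invoke Proposition~\ref{NR Ext}. As in the proof of Theorem~\ref{thm:be}, we may pass to the completion and assume $R$ is complete.

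First, set $M := B^\vee$ and let $L := (B/A)^\vee$, $K := (B/D)^\vee$, and $J := (B/C)^\vee$, all viewed as submodules of $M$ via the Matlis duals of the natural surjections. Since $A \subseteq C \subseteq D \subseteq B$, we obtain $K \subseteq J \subseteq L \subseteq M$, and these are all finitely generated $R$-modules. By Theorem~\ref{thm:expreddual}, the maximality of the $\int$-expansion $D$ translates into $K$ being a \emph{minimal} $\cl$-reduction of $L$ in $M$; likewise, $C$ being an $\int$-expansion translates into $J$ being a $\cl$-reduction of $L$ that contains $K$. Since $\int$ is a Nakayama relative interior, Proposition~\ref{pr:nakdual} gives that $\cl = \int^\dual$ is a Nakayama closure on finitely generated modules. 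Applying the ``former case'' of Proposition~\ref{NR Ext} to the pair $K \subseteq J$ of $\cl$-reductions of $L$ in $M$, we conclude that any minimal generating set of $K$ extends to a minimal generating set of $J$.

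It remains to translate this fact back to cogenerating sets on the $B$-side using the duality between minimal generating sets and minimal cogenerating sets recorded in \cite[Lemma 6.8]{ERGV-chdual}. Under that duality, a minimal generating set $k_1, \ldots, k_t$ of $K = (B/D)^\vee$ corresponds to a minimal cogenerating set $g_1, \ldots, g_t : B \to E$ of $B/D$, and similarly minimal generating sets of $J = (B/C)^\vee$ correspond to minimal cogenerating sets of $B/C$. The inclusion $K \hookrightarrow J$ is the Matlis dual of the canonical surjection $\pi : B/C \onto B/D$, so each $k_i$ viewed as an element of $J$ corresponds to the composite $g_i \circ \pi : B/C \to E$, i.e., the original cogenerator of $B/D$ regarded (via $\pi$) as a cogenerator of $B/C$. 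Therefore extending $\{k_1, \ldots, k_t\}$ to a minimal generating set of $J$ is exactly extending the minimal cogenerating set $\{g_1, \ldots, g_t\}$ of $B/D$ to a minimal cogenerating set of $B/C$.

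The main technical point to watch is ensuring that the bijection in \cite[Lemma 6.8]{ERGV-chdual} respects ``extension'' in the intended sense along the inclusion $K \hookrightarrow J$, but this should follow from the compatibility of Matlis duality with the functorial identifications $(B/D)^\vee \hookrightarrow (B/C)^\vee \hookrightarrow B^\vee$ already used repeatedly in Sections~\ref{sec:nonresidualdual} and~\ref{sec:nonresidualcorehullduality}.
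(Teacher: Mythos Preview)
Your argument is correct and follows essentially the same route the paper indicates: dualize via Theorem~\ref{thm:expreddual} to convert $\int$-expansions into $\cl$-reductions (with maximality becoming minimality), invoke Proposition~\ref{pr:nakdual} so that $\cl$ is Nakayama, apply Proposition~\ref{NR Ext} (which does not require residuality) to get the extension of minimal generating sets, and then translate back via \cite[Lemma 6.8]{ERGV-chdual}. The paper's own proof is just the one-line pointer to Theorem~\ref{thm:expreddual} and \cite[Theorem 2.9]{ERGV-chdual}; you have simply written out the details.
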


We note that Lemma~\ref{lemma:dualofintersectionissum} is needed for the duality between $\int$-hulls and $\cl$-cores for relative interiors and closure operations:

\begin{thm}[cf. Theorem 6.17 of \cite{ERGV-chdual}]
\label{thm:hullexists}
Let $R$ be a complete Noetherian local ring.  Let $A \subseteq B$ be Artinian $R$-modules, and let $\int$ be a relative Nakayama interior defined on Artinian $R$-modules. Then the $\int$-hull of $A$ in $B$ is dual to the $\cl$-core of $(B/A)^\vee$ in $B^\vee$, where $\cl$ is the closure operation dual to $\int$.
\end{thm}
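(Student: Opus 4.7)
The plan is to combine the order-reversing bijection between $\int$-expansions and $\cl$-reductions from Theorem~\ref{thm:expreddual} with the interchange of sums and intersections under Matlis duality recorded in Lemma~\ref{lemma:dualofintersectionissum}. Unpacking the two sides by definition, let $\mathcal{E}$ denote the family of $\int$-expansions of $A$ in $B$ and $\mathcal{R}$ the family of $\cl$-reductions of $(B/A)^\vee$ in $B^\vee$, so that
\[
\int\hull^B(A) = \sum_{C\in\mathcal{E}} C \qquad \text{and} \qquad \cl\core_{B^\vee}((B/A)^\vee) = \bigcap_{L\in\mathcal{R}} L.
\]
Theorem~\ref{thm:expreddual} provides the order-reversing bijection $\mathcal{E} \to \mathcal{R}$ via $C \mapsto (B/C)^\vee$.

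The core of the argument will then be the chain of equalities
\[
\left(\frac{B}{\int\hull^B(A)}\right)^\vee = \left(\frac{B}{\sum_{C\in\mathcal{E}} C}\right)^\vee = \bigcap_{C\in\mathcal{E}} (B/C)^\vee = \bigcap_{L\in\mathcal{R}} L = \cl\core_{B^\vee}((B/A)^\vee),
\]
where the first equality is the definition of the $\int$-hull, the second is the sum-to-intersection identity of Lemma~\ref{lemma:dualofintersectionissum} applied to the family $\{C\}_{C\in\mathcal{E}}$ of submodules of $B$, the third is reindexing along the bijection of Theorem~\ref{thm:expreddual}, and the last is the definition of the $\cl$-core. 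This display exhibits $\cl\core_{B^\vee}((B/A)^\vee)$ as the Matlis dual of the quotient $B / \int\hull^B(A)$, which is precisely the duality asserted.

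I do not anticipate a substantial obstacle: the content is essentially already packaged in the earlier results of this section together with their dual counterparts. The Nakayama hypothesis is not invoked directly in the chain of equalities above; rather, it is the standing assumption under which Theorem~\ref{thm:expreddual}, Proposition~\ref{NR Ext}, and Proposition~\ref{prop:maxintexpansions} make the two families $\mathcal{E}$ and $\mathcal{R}$ behave well and nonvacuously (and, via Proposition~\ref{pr:nakdual}, ensures that $\cl = \int^{\dual}$ is itself a Nakayama closure so that $\cl$-reductions can be organized by minimal ones). The only real bookkeeping point is to confirm that the classes $\cP$ and $\cP^\vee$ from Section~\ref{sec:nonresidualdual} are matched so that membership $(C,B) \in \cP^\vee$ translates to $((B/C)^\vee, B^\vee) \in \cP$ under the correspondence, but this is immediate from the definition of $\cP^\vee$, so the proof reduces to assembling the pieces above.
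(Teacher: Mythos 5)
Your proof is correct and takes essentially the same route as the paper intends: the paper's proof is a pointer to \cite[Theorem 6.17]{ERGV-chdual} ``again using Lemma~\ref{lemma:dualofintersectionissum} and Proposition~\ref{pr:dualityprops},'' and your chain of equalities, feeding the order-reversing bijection of Theorem~\ref{thm:expreddual} into the sum/intersection interchange of Lemma~\ref{lemma:dualofintersectionissum}, is exactly that argument made explicit. Your side remark that the Nakayama hypothesis is not used in the chain (it only enters to guarantee the expansion/reduction posets have minimal and maximal elements and that the statement is nonvacuous) is a fair and accurate observation, not a gap.
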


\begin{proof}
The proof is identical to \cite[Theorem 6.17]{ERGV-chdual} again using Lemma~\ref{lemma:dualofintersectionissum} and Proposition \ref{pr:dualityprops}.
\end{proof}

\section{Formulas for integral-hulls for submodules of injective hull of the residue field}\label{sec:integralhulls}

The obstruction to applying the results of \cite{nmeRG-cidual} and \cite{ERGV-chdual} to integral closure
is that most methods of extending integral closure to modules do not give residual closures. Now that we know that the dual of any Nakayama closure operation $\cl$ defined on Matlis dualizable modules is a Nakayama interior $\int$ (Proposition \ref{pr:nakdual}), we can define a dual to integral closure of modules. We call this integral interior.  There are various generalizations of integral closure to modules, and the following definition applies to any of them.  In fact, understanding the integral closure of an ideal in a Noetherian ring is sufficient for the results in this section.
 
 We also include some striking results which tie our work on basically full closure/basically empty interior to the tight$\hull$/tight$\core$ of certain modules.

\begin{defn}
For $N \subseteq M$ be finitely generated modules, let $\alpha(N,M)=N_M^-$ be the integral closure of $N$ in $M$. If $A \subseteq B$ are Artinian $R$-modules, then we define $A_{-}^B:=\alpha^{\dual}(A,B)$
to be the integral interior of $A$ in $B$.  We call a submodule $A \subseteq B$ integrally open in $B$ if $A_{-}^B=A$.
\end{defn}

The following gives a criterion for determining which submodules of the injective hull of the residue field of a complete local domain $R$ are integrally open and is in fact the dual to Proposition \ref{pr:vascriterion}.

\begin{thm}\label{thm:intopenJbe}
Let $(R,\m)$ be a complete local domain and $L \subseteq E=E_R(k)$.  $L$ is integrally open in $E$ if and only if $L$ is $J$-basically empty in $E$ for all ideals $J \subseteq R$.
\end{thm}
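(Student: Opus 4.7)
The plan is to transfer everything to the ring side via Matlis duality, then invoke the Vasconcelos/Ratliff--Rush criterion (Proposition~\ref{pr:vascriterion}). Since $R$ is a complete local domain, $E^\vee = R$ and every submodule $L \subseteq E$ has a Matlis dual $(E/L)^\vee$ that sits inside $E^\vee = R$, i.e., is an ideal of $R$. Write $I := (E/L)^\vee$.

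First I would show that $L$ is integrally open in $E$ if and only if $I$ is integrally closed in $R$. Let $\alpha$ denote integral closure (defined on pairs of finitely generated $R$-modules), so by definition $L_-^E = \alpha^\dual(L,E)$. Applying Lemma~\ref{lem:duality} to the closure $\alpha$ gives
\[
(E/L_-^E)^\vee \;=\; \bigl((E/L)^\vee\bigr)^-_{E^\vee} \;=\; I^-_R.
\]
Since $L$ and $L_-^E$ are Matlis-dualizable, $L = L_-^E$ is equivalent to $(E/L)^\vee = (E/L_-^E)^\vee$, i.e., to $I = I^-_R$.

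Second, I would apply the same argument to the basically full/basically empty duality. For each ideal $J \subseteq R$, Theorem~\ref{thm:Jcoldual} gives $\Jintrel{J}{E}{L} = (\Jcolsym{J})^\dual(L,E)$, and Lemma~\ref{lem:duality} (applied now to the closure operation $\Jcolsym{J}$) yields
\[
\bigl(E/\Jintrel{J}{E}{L}\bigr)^\vee \;=\; \Jcol{J}{(E/L)^\vee}{E^\vee} \;=\; \Jcol{J}{I}{R}.
\]
Again by Matlis-dualizability, $L = \Jintrel{J}{E}{L}$ is equivalent to $I = \Jcol{J}{I}{R}$, i.e., $L$ is $J$-basically empty in $E$ if and only if $I$ is $J$-basically full in $R$.

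Finally, combining the two equivalences with Proposition~\ref{pr:vascriterion} (which needs $R$ to be a domain, as assumed) finishes the argument:
\begin{align*}
L \text{ integrally open in } E
&\iff I \text{ integrally closed in } R \\
&\iff \Jcol{J}{I}{R} = I \text{ for every ideal } J \subseteq R \\
&\iff L = \Jintrel{J}{E}{L} \text{ for every ideal } J \subseteq R.
\end{align*}
The only non-routine step is the second equivalence, which is exactly Proposition~\ref{pr:vascriterion}; everything else is a direct bookkeeping exercise with the duality formula of Lemma~\ref{lem:duality}. No genuine obstacle is anticipated, but care must be taken that the pair operations involved are applied to pairs in $\cP$ (namely Artinian pairs with $L \subseteq E$), so that Matlis duality takes them to finitely generated pairs $I \subseteq R$ where Proposition~\ref{pr:vascriterion} applies.
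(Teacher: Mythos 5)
Your proof is correct and takes essentially the same approach as the paper: pass to the ring side via Matlis duality (your $I = (E/L)^\vee$ is the paper's $I = (0:_R L)$), invoke Proposition~\ref{pr:vascriterion} on the ideal $I$, and translate back through the duality between $\Jcolsym J$ and $\Jintrelsym J$. The only cosmetic difference is that you spell out the equivalence ``$L$ integrally open in $E$ iff $I$ integrally closed in $R$'' explicitly via Lemma~\ref{lem:duality}, whereas the paper reaches the colon formula via Theorem~\ref{thm:coldual} and compresses that final identification into its concluding sentence.
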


\begin{proof}
Let $I=(0:_R L)$.  We know that $I$ is integrally closed if and only if $I=\Jcol J I R=(JI:_RJ)$ for all ideals $J \subseteq R$ by Proposition \ref{pr:vascriterion}.  Thus $I$ is integrally closed if and only if
\[
L=(R/I)^{\vee}=\left(\displaystyle\frac{R}{(JI:J)}\right)^{\vee}=J(L:_EJ)
\]
for all ideals $J \subseteq R$ by Theorem~\ref{thm:coldual}.
Note that $\Jcolsym J^\dual(L,E)=\Jintrel{J}{E}{L}=J(L:_E J)$ by Theorem~\ref{thm:Jcoldual}.  Hence $L$ is integrally open in $E$ if and only if $L$ is $J$-basically empty in $E$ for all ideals $J$ of $R$.
\end{proof}

Using the duality of $\int$-expansions/$\cl$-reductions and $\int$-hull/$\cl$-core for relative interiors $\int$ developed in Section \ref{sec:nonresidualcorehullduality}, we get a duality between the integral$\core$ of finitely generated modules and integral$\hull$ of Artinian modules.  Under certain assumptions on the ring and the ideal itself which we will detail below, there are some nice formulas for the core of an ideal, which involve colons.  In conjunction with Theorem~\ref{thm:coldual} and our core/hull duality (Theorem~\ref{thm:hullexists}),  we can determine the integral-hull of some submodules of the injective hull $E_R(k)$ of the residue field of a Noetherian local ring $(R,\m,k)$.

Before stating the known formulas for core, we recall some definitions and results for cores of ideals.

\begin{defn}\label{defn:Gs} (See \cite[Section 2]{CPU2002}.)
Let $R$ be a Noetherian ring and $I$ an ideal of $R$.  We say that $I$ satisfies the condition $G_s$ if for every prime $\fp \supseteq I$ with ${\rm dim}(R_{\fp}) \leq s-1$, the minimal number of generators of $I_\fp$ is less than or equal to $\dim R_\p$.
\end{defn}

\begin{defn}\label{defn:weakresS2} (See \cite[Section 2]{CPU2002}.)
A proper ideal $K$ of $I$ is a geometric $s$-residual intersection of $I$ if there exists an $s$-generated ideal $\mathfrak{a} \subseteq I$ such that $K=I:_R\mathfrak{a}$ and ${\rm ht}(I+K) \geq s+1$.  We say $I$ is weakly $s$-residually $S_2$ if $R/K$ is $S_2$ for every geometric $i$-residual intersection $K$ and every $i \leq s$.
\end{defn}

 Huneke and Swanson's paper \cite{HuSw-core} studying the core of ideals in a two-dimensional regular local ring discussed several computational techniques. The next push for finding a formula for the core of an ideal was pursued by Corso, Polini, and Ulrich \cite{CPU2002}.  They conjectured that when $(R,\m)$ is Cohen-Macaulay with infinite residue field, $I$ is an ideal with analytic spread $\ell$ which satisfies $G_{\ell}$ and is weakly $(\ell-1)$-residually $S_2$, and $J$ is a minimal reduction of $I$ with reduction number $r$, then
\[
{\rm core}(I)=(J^r:_RI^r)I=(J^r:_RI^r)J=(J^{r+1}:_RI^r).
\]
In this same paper, Corso, Polini, and Ulrich prove the following:

\begin{prop}\label{CPUcoreformula} \cite[Proposition 5.3]{CPU2002}
Let $(R, \m)$ be a Gorenstein ring with infinite residue field and $I$ be an ideal of height at least 2, reduction number $r$ and with analytic spread $\ell$ satisfying $G_{\ell}$ and  $\depth (R/I^j) \geq {\rm dim}(R/I)-j+1$ with $1 \leq  j \leq \ell-g+1$.  Then for any minimal reduction $J$ of $I$,
\[
{\rm core}(I)=(J^r:_RI^r)I=(J^r:_RI^r)J=(J^{r+1}:_RI^r).
\]
\end{prop}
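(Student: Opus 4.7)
The plan is in two parts: first establish the chain of equalities among the three expressions on the right, and then identify this common ideal with ${\rm core}(I)$. For the right-hand equalities, the containment $(J^r :_R I^r)J \subseteq (J^r :_R I^r)I$ is free since $J \subseteq I$, while $(J^r :_R I^r)I \subseteq (J^{r+1} :_R I^r)$ follows by a short calculation: for $x \in (J^r :_R I^r)$ and $y \in I$,
\[ (xy) I^r = x(y I^r) \subseteq x I^{r+1} = x(J I^r) = J(x I^r) \subseteq J \cdot J^r = J^{r+1}, \]
using $I^{r+1} = J I^r$ from the definition of the reduction number $r$. The reverse inclusion $(J^{r+1} :_R I^r) \subseteq (J^r :_R I^r) J$ is more delicate; I would try to extract it from a duality on the associated graded module $\bigoplus_{n \geq 1} I^n / JI^{n-1}$. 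Under the Gorenstein hypothesis and the depth conditions on the powers $R/I^j$, this graded module (or the closely related Sally module) should be \CM, with its top degree pairing dually with the bottom, forcing any element of $(J^{r+1} :_R I^r)$ into $(J^r :_R I^r) \cdot J$.

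Next I would show $(J^{r+1} :_R I^r) \subseteq {\rm core}(I)$. Under the stated hypotheses, every minimal reduction $K$ of $I$ is generated by a regular sequence of length $\ell$ (because $R$ is \CM, $I$ satisfies $G_\ell$, and the depth conditions apply), and the reduction number of $I$ with respect to every minimal reduction is the common value $r$. A regular sequence yields the classical identity $(K^{n+1} :_R K^n) = K$ for all $n \geq 1$, hence
\[ (K^{r+1} :_R I^r) \subseteq (K^{r+1} :_R K^r) = K \]
since $K^r \subseteq I^r$. Combined with an invariance statement that $(J^{r+1} :_R I^r) = (K^{r+1} :_R I^r)$ holds for all pairs of minimal reductions $J, K$, intersecting over all $K$ yields $(J^{r+1} :_R I^r) \subseteq \bigcap_K K = {\rm core}(I)$.

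For the reverse inclusion ${\rm core}(I) \subseteq (J^{r+1} :_R I^r)$, given $z \in {\rm core}(I)$ I want to show $z I^r \subseteq J^{r+1}$. Since the residue field is infinite, a standard tactic is to pick a finite family of sufficiently generic minimal reductions $K_1, \dots, K_s$ whose intersection already satisfies $K_1 \cap \cdots \cap K_s \subseteq (J^{r+1} :_R I^r)$; this relies on the \CM\ hypothesis to perform simultaneous linkage across the family of minimal reductions. Since $z$ lies in every minimal reduction, $z \in K_1 \cap \cdots \cap K_s$, and the desired containment follows.

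The main obstacles I anticipate are exactly the two parallel facts just invoked: the invariance $(J^{r+1} :_R I^r) = (K^{r+1} :_R I^r)$ across minimal reductions $J, K$, and the analogous cancellation $(J^{r+1} :_R I^r) \subseteq (J^r :_R I^r) J$. Both appear to use the Gorenstein assumption essentially, through the duality $\Ext^\ell_R(R/K, R) \cong R/K$ that identifies colons into $K^{r+1}$ with socles of $R/K^{r+1}$, combined with the depth hypothesis on $R/I^j$ controlling the Sally module (or equivalently the approximation complex) of $I$ relative to $J$. Pinning these invariances down concretely is where the bulk of the technical work lies, and it is why the hypotheses on $R$ and $I$ are so specific.
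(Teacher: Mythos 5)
This proposition is not proved in the paper; it is cited verbatim as \cite[Proposition~5.3]{CPU2002}, and the paper uses it as a black box input to Theorem~\ref{thm:hullform}.  So there is no proof in the source to compare against directly.  Evaluating your reconstruction on its own terms: the two elementary containments $(J^r:_RI^r)J \subseteq (J^r:_RI^r)I \subseteq (J^{r+1}:_RI^r)$ are correctly argued (the calculation using $I^{r+1}=JI^r$ is exactly right), and you have correctly identified the two genuine pressure points, namely the reverse containment $(J^{r+1}:_RI^r)\subseteq (J^r:_RI^r)J$ and the independence of $(J^{r+1}:_RI^r)$ from the choice of minimal reduction.  But at those two points the proposal stops being a proof: ``duality on the Sally/associated graded module forcing the top degree to pair with the bottom'' and ``simultaneous linkage across a generic family of reductions'' are gestures toward the kind of machinery that could work, not arguments.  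The Corso--Polini--Ulrich proof actually runs through residual intersection theory: the $G_\ell$ and depth hypotheses guarantee that $I$ satisfies the Artin--Nagata property $AN_{\ell-1}^-$, which in a Gorenstein ring controls the Cohen--Macaulayness and canonical module of geometric $s$-residual intersections of $I$ (in particular of ideals of the form $J:I$ for minimal reductions $J$); that structure is what delivers both the invariance across reductions and the cancellation.  Your sketch also leaves two subtleties unaddressed that the reader would immediately worry about: (i) that all minimal reductions of $I$ have the same reduction number $r$ is itself a theorem requiring the depth hypotheses, and (ii) the inclusion ${\rm core}(I)\subseteq (J^{r+1}:_RI^r)$ is not at all automatic, since $z\in J$ only gives $zI^r\subseteq JI^r=I^{r+1}$, which is in general strictly larger than $J^{r+1}$; the genericity step you invoke is precisely meant to close this gap, but it is asserted rather than carried out.
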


In 2005 two improvements on Proposition~\ref{CPUcoreformula} were made by Huneke and Trung \cite{HuTrcore} and Polini and Ulrich \cite{PUcoreform}.  Huneke and Trung showed that equimultiple ideals of positive height in Cohen-Macaulay local rings of characteristic 0 satisfy one of the nice formulas for core conjectured by Corso, Polini, and Ulrich.

\begin{defn}(See \cite{Shah-equimultiple}.)
Let $R$ be a Noetherian ring and $I$ an ideal of $R$ of height $h$.  We say that $I$ is an equimultiple ideal if the analytic spread of $I$ is $h$ elements.
\end{defn}

\begin{thm}\label{thm:HuTr_core}
\cite[Theorem 3.7]{HuTrcore} Let $(R, \m)$ be a Cohen-Macaulay local ring whose residue field is characteristic $0$.  Let $I$ be an equimultiple ideal with height $g \geq 1$ and $J$ a minimal reduction of $I$ with reduction number $r$.  Then 
\[
{\rm core}(I)=(J^{r+1}:_RI^r).
\]
\end{thm}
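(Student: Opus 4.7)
The plan is to reconstruct the Huneke-Trung argument, which refines the Corso-Polini-Ulrich strategy of Proposition~\ref{CPUcoreformula} by exploiting the equimultiple hypothesis to eliminate the depth conditions on the powers of $I$ imposed there.  First I would pass to $R(t) := R[t]_{\m R[t]}$ to arrange for an infinite residue field, since ${\rm core}(I)$ and the colon $(J^{r+1}:_R I^r)$ both behave well under this standard faithfully flat extension and the hypotheses (Cohen-Macaulay, equimultiple of height $g$, characteristic zero, reduction number $r$) all descend and lift.  The equimultiple assumption $\ell(I) = \hgt(I) = g$ combined with Cohen-Macaulayness of $R$ then forces every minimal reduction $J$ of $I$ to be a complete intersection generated by a regular sequence of length $g$, which unlocks a powerful colon calculus via Koszul techniques.

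The inclusion ${\rm core}(I) \subseteq (J^{r+1} :_R I^r)$ is the easier direction.  For $x \in {\rm core}(I)$ one uses that $x$ lies in every (general) minimal reduction $K$ together with the reduction number identity $I^{r+j} = K^j I^r$ valid for all $j \geq 0$ and all such $K$ (the reduction number being generic in characteristic zero), then intersects over a rich enough family of general $K$'s to conclude $x I^r \subseteq J^{r+1}$.

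The reverse inclusion $(J^{r+1} :_R I^r) \subseteq {\rm core}(I)$ is where I expect the main obstacle to lie.  The strategy would be to establish two things: first, that the colon $(J^{r+1}:_R I^r)$ is independent of the choice of general minimal reduction $J$; and second, that $(K^{r+1}:_R I^r) \subseteq K$ for every such $K$, with the latter following directly from the complete-intersection colon calculus in the Cohen-Macaulay ring $R$.  Together these imply $(J^{r+1}:_R I^r) \subseteq \bigcap_K K \subseteq {\rm core}(I)$, where the intersection ranges over general minimal reductions.  The invariance step is the technical heart of the proof; it is where the characteristic zero hypothesis (rather than merely an infinite residue field) is essential, supplying the generic-smoothness and flatness arguments needed to compare the colons across the parameter space $\mathbb{A}^{g(\mu(I)-g)}$ of general minimal reductions.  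In positive characteristic, Frobenius-type rigidity can cause this colon to jump, and indeed the conclusion of the theorem requires characteristic zero as stated.
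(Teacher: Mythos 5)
The statement you were asked to prove is not actually proved in this paper: it is recalled verbatim as background, with the proof delegated entirely to the citation \cite[Theorem 3.7]{HuTrcore}. So there is no ``paper's own proof'' against which to compare your argument; the paper simply states the result and moves on to use it (to derive the hull formulas in Theorems~\ref{thm:hullform} and~\ref{thm:hullIbf}). In a setting like this, the expected answer is a one-line observation that the theorem is quoted, not reproved.

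That said, a few remarks on your reconstruction of the Huneke--Trung argument. Your overall skeleton --- reduce to complete-intersection minimal reductions via the equimultiple plus Cohen--Macaulay hypotheses, prove $(K^{r+1}:_R I^r)\subseteq K$ for each general minimal reduction $K$ via a Koszul/linkage colon calculus, establish independence of $(J^{r+1}:_R I^r)$ from the general $J$, and use characteristic zero to make the generic-reduction argument run --- is consistent with the published strategy, and your explanation of why characteristic zero (rather than mere infiniteness of the residue field) is needed is essentially correct: Polini and Ulrich later showed the formula can fail in small positive characteristic. Two small cautions: first, the passage to $R(t)$ is superfluous here, since a residue field of characteristic zero is automatically infinite; second, you characterize $\mathrm{core}(I)\subseteq(J^{r+1}:_R I^r)$ as the ``easier'' direction and sketch it by intersecting over many general $K$, but in the literature this containment is itself a substantial step (it is where the reduction-number and Artin--Nagata-type hypotheses do real work in CPU, and where Huneke--Trung's equimultiple simplification pays off), so labeling it the easy half understates what has to be proved there. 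None of this affects the paper, which treats the theorem as an external input.
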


Polini and Ulrich showed that the same formula holds for ideals of positive height in any Gorenstein local ring whose characteristic is 0 or relatively large and not just for the reduction number $r$, but any $n$ which is sufficiently large.

\begin{thm} \label{thm:PU_core}
\cite[Theorem 4.5]{PUcoreform} Let $(R, \m)$ a Gorenstein local ring with infinite residue field and $I$ an ideal with analytic spread $\ell$ and positive height $g$,  
and let $J$ be a minimal reduction of $I$ with reduction number $r$.  Assume $I$ satisfies $G_{\ell}$ and $\depth (R/I^j) \geq {\rm dim}(R/I)-j+1$ for $1 \leq j \leq \ell-g$ and either the characteristic of $R/\m$ is $0$ or $p > r-\ell+g$.  Then
\[
{\rm core}(I)=(J^{n+1}:_RI^n)
\]
for every $n \geq {\rm max}\{r-\ell+g,0\}$.
\end{thm}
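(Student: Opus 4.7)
The plan is to follow Polini and Ulrich's strategy, as this theorem is quoted from their paper and lies outside the duality framework developed earlier in our paper. The guiding principle is to show that the colon ideal $K_n := (J^{n+1} :_R I^n)$ is independent of the choice of minimal reduction $J$ of $I$; once this independence is established, identifying $K_n$ with $\text{core}(I)$ becomes comparatively direct.

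For the easier containment $\text{core}(I) \subseteq K_n$, I would use that any $x \in \text{core}(I)$ lies in every minimal reduction of $I$. Combined with the equality $I^{n+1} = JI^n$ (valid for $n$ at least the reduction number $r$) and the $G_\ell$ plus depth hypotheses, which control the powers of $J$ through a Rees-algebra argument, one iterates a colon computation to conclude $xI^n \subseteq J^{n+1}$, i.e.\ $x \in K_n$.

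For the reverse inclusion $K_n \subseteq \text{core}(I)$, I would reduce to showing that $K_n$ does not depend on $J$. Granting this independence, for each minimal reduction $J'$ we would obtain $K_n = (J'^{n+1}:_R I^n) \subseteq J'$, with the second containment using that $J'$ is generated by a regular sequence under the given hypotheses (since $\mu(J') = \ell$, $I$ satisfies $G_\ell$, and the depth assumptions give an $R$-regular sequence of length $\ell$ generating $J'$). Intersecting over all such $J'$ then yields $K_n \subseteq \text{core}(I)$.

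The main obstacle is the independence itself. Here I would parameterize minimal reductions of $I$ by a dense open subscheme of the Grassmannian of $\ell$-dimensional $k$-subspaces of $I/\m I$ (available because $k$ is infinite) and apply residual intersection and linkage theory under the $G_\ell$ and depth hypotheses to show that the map $J \mapsto K_n$ is constant on this parameter space. The characteristic restriction $p > r - \ell + g$ is expected to enter precisely where the genericity argument requires invertibility of certain integer coefficients, arising as Vandermonde-type determinants in the change-of-basis step, which could vanish in small positive characteristic. This last step is both the technically deepest and the reason the theorem cannot simply be dualized from our earlier framework.
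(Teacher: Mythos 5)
This theorem appears in the paper only as a quoted background result, cited to Polini and Ulrich; the paper supplies no proof of its own, so there is no internal argument to compare your sketch against. Your outline does track the broad strategy of the original source — show that $K_n := (J^{n+1}:_R I^n)$ is independent of the choice of minimal reduction $J$, then deduce the two containments — but one step as written is false. You justify $K_n \subseteq J'$ for each minimal reduction $J'$ by asserting that $J'$ is generated by a regular sequence because $\mu(J') = \ell$. In a Cohen--Macaulay ring an ideal generated by a regular sequence of length $\ell$ necessarily has height $\ell$, whereas a minimal reduction $J'$ of $I$ has height $g = \height I$; so your claim can hold only when $\ell = g$, i.e.\ the equimultiple case already covered by Huneke--Trung (Theorem~\ref{thm:HuTr_core}). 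The present theorem is stated precisely to allow $\ell > g$, and there the containment $(J'^{n+1}:_R I^n) \subseteq J'$ cannot be obtained from a complete-intersection argument; it requires the $G_\ell$ and depth hypotheses via residual intersection and Artin--Nagata machinery, which is the actual technical core of the Polini--Ulrich proof.

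Your guess that the characteristic restriction $p > r-\ell+g$ arises from Vandermonde-type determinants in a change-of-basis step is not the mechanism in the source: the restriction is used to control powers of general linear combinations of generators so that the relevant colon ideals can be compared over an extended base, and the obstruction in small positive characteristic is in those specialization steps, not in a Grassmannian parametrization per se. None of this affects the paper's correctness — it simply cites the result — but your sketch as given would not establish the theorem in the generality claimed.
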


Fouli, Polini, and Ulrich in \cite[Theorem 3.3]{FPUcoreform} gave another generalization of Theorem~\ref{thm:HuTr_core} that has no hypotheses on the characteristic of the residue field, but requires additional hypotheses, including that the residue field is perfect.

Using Theorem~\ref{thm:hullexists} and Theorem~\ref{thm:coldual}, we obtain formulas for some submodules of the injective hull of the residue field.  

\begin{thm} \label{thm:hullform}
If $(R,\m)$ is a complete local ring 
and $I$ is an ideal satisfying ${\rm core}(I)=(J^{n+1}:I^n)$ for some
reduction $J$ and natural number $n$,
then
\[
{\rm hull}^E(0:_E I)=I^n(0:_E J^{n+1}).
\]
\end{thm}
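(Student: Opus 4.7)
The plan is to invoke the core–hull duality of Theorem~\ref{thm:hullexists} to transfer the hypothesized formula for $\core(I)$ into a formula for $\hull^E(0:_E I)$, and then apply Theorem~\ref{thm:coldual} once to identify the resulting Matlis dual explicitly.

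First I would set $B := E$ and $A := 0:_E I$, so that $B^\vee = R$ and, by Matlis duality, $(B/A)^\vee = (E/(0:_E I))^\vee \cong I$ as a submodule of $R$. Let $\cl$ denote integral closure on finitely generated $R$-modules and $\int := \cl^\dual$ its dual relative interior, which is the integral interior on Artinian $R$-modules. Since $\cl$ is a Nakayama closure operation, Proposition~\ref{pr:nakdual} gives that $\int$ is a Nakayama interior, so Theorem~\ref{thm:hullexists} applies; passing through the proof (via Lemma~\ref{lemma:dualofintersectionissum}, which turns the sum of $\int$-expansions on the hull side into the intersection of $\cl$-reductions on the core side) yields the key identity
\[
(E/\hull^E(0:_E I))^\vee \;=\; \cl\core_R(I) \;=\; \core(I).
\]
By hypothesis, the right-hand side equals $(J^{n+1}:_R I^n)$.

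Next, from the short exact sequence $0 \to (E/\hull^E(0:_E I))^\vee \to E^\vee \to \hull^E(0:_E I)^\vee \to 0$, which is exact because $E$ is injective so $(-)^\vee$ is exact, I would conclude that
\[
\hull^E(0:_E I)^\vee \;\cong\; R/(J^{n+1}:_R I^n).
\]
Applying Matlis duality once more gives $\hull^E(0:_E I) \cong (R/(J^{n+1}:_R I^n))^\vee$, so it remains to compute this dual as a submodule of $E$.

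Finally, I would invoke Theorem~\ref{thm:coldual}(a) with the substitutions $M := R$, $L := R$, and with the roles of the two ideals in that theorem played by $I \leftrightarrow I^n$ and $J \leftrightarrow J^{n+1}$. Since $A = (R/R)^\vee = 0$, the theorem yields
\[
\bigl(R/(J^{n+1}R :_R I^n)\bigr)^\vee \;=\; I^n\bigl(0 :_E J^{n+1}\bigr),
\]
and $J^{n+1}R = J^{n+1}$, so combining this with the previous step gives $\hull^E(0:_E I) = I^n(0 :_E J^{n+1})$, as required. The main obstacle is purely bookkeeping: one must keep track of which side of Matlis duality each module lives on and verify that the identifications $(B/A)^\vee \cong I$ and the output of Theorem~\ref{thm:coldual} are compatible with the natural inclusions. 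No substantive calculation is needed beyond assembling these three ingredients.
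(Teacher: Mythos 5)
Your proposal is correct and follows the same strategy as the paper: apply the core--hull duality (Theorem~\ref{thm:hullexists}) to identify $(R/\core(I))^\vee$ with $\hull^E(0:_E I)$, and then use Theorem~\ref{thm:coldual}(a) to compute $(R/(J^{n+1}:_R I^n))^\vee$ as $I^n(0:_E J^{n+1})$. The only cosmetic difference is in how Theorem~\ref{thm:coldual} is parameterized: you take $L = R$ with the ideals $(I^n, J^{n+1})$, whereas the paper decomposes $J^{n+1} = J\cdot J^n$ and takes $L = J^n$ with the ideals $(I^n, J)$; both substitutions produce the same identity, so the arguments are effectively identical.
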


\begin{proof}
If ${\rm core}(I)=(J^{n+1}:I^n)$ 
then dually we have
\[
(R/({\rm core}(I)))^\vee=(R/(J^{n+1}:_R I ^n))^\vee.
\]
By the core-hull duality Theorem~\ref{thm:hullexists}, 
\[
(R/({\rm core}(I)))^\vee ={\rm hull}^E (0:_E I).
\]
Also we have
\begin{align*}
(R/(J^{n+1}:_R I ^n))^\vee=&(R/(J\cdot J^{n}:_R I ^n))^\vee  \\
&=I^n((R/J^n)^\vee:_EJ) \\
&=I^n((0:_E J^n):_EJ) \\
&=I^n(0:_E J^{n+1})\\
\end{align*}
where the second equality follows by Theorem~\ref{thm:coldual}.
Hence,
\[
{\rm hull}^E (0:_E I)=I^n(0:_E J^{n+1}).
\]
\end{proof}

Polini and Ulrich note in \cite[Remark 2.4]{PUcoreform}, that if $\text{gr}_I(R)$ is Cohen-Macaulay and $R, I$ and $J$ are as in Theorem \ref{thm:PU_core}, if $n \geq \text{max}\{r-\ell+g ,0\}$ and $i \geq 0$ then 
\[
(J^{i+n}:_RI^n)=J^i(J^n:_RI^n)=I^i(J^n:_RI^n).\] 
 In this setting, we can write ${\rm core}(I)=I(J^n:_RI^n)$ and can give a reformulation of the core of an ideal in terms of $I$-basically full interior.

\begin{prop}\label{prop:coreIbe}
Let $(R, \m)$ be a Cohen-Macualay local ring.  Suppose $R$, $I$ and $J$ satisfy the conditions in Theorem \ref{thm:PU_core}, and $\text{gr}_I(R)$ is Cohen-Macaulay,
then
\[
{\rm core} (I)=I(J^n:_RI^n)=I((J^n:_R I^{n-1}):_RI)=\Jintrel I R {(J^n:_RI^{n-1})}
\]
for $n \geq \max\{r-\ell+g+1, 1\}$.
\end{prop}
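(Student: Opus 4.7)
The plan is to chain four equalities, each of which is essentially a direct invocation of a result already in the excerpt. First I would apply Theorem~\ref{thm:PU_core} at the index $n$: since $n \geq r-\ell+g+1 \geq \max\{r-\ell+g,0\}$, the hypotheses of that theorem are met and so $\text{core}(I) = (J^{n+1} :_R I^n)$.

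Next I would use the Polini--Ulrich remark recalled in the paragraph just before the proposition: when $\text{gr}_I(R)$ is Cohen-Macaulay and $n \geq \max\{r-\ell+g,0\}$, we have $(J^{i+n} :_R I^n) = I^i (J^n :_R I^n)$ for every $i \geq 0$. Taking $i=1$ gives $(J^{n+1}:_R I^n) = I(J^n:_R I^n)$, which completes the first equality $\text{core}(I) = I(J^n :_R I^n)$.

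For the middle equality, I would invoke the standard colon identity
\[
(J^n :_R I^n) = (J^n :_R I \cdot I^{n-1}) = \bigl((J^n :_R I^{n-1}) :_R I\bigr),
\]
so that multiplying through by $I$ yields $I(J^n :_R I^n) = I\bigl((J^n :_R I^{n-1}) :_R I\bigr)$. Finally, the last equality is just the definition of the $I$-basically empty interior from Definition~\ref{def:jbei} (with $J := I$, $M := R$, $L := (J^n :_R I^{n-1})$), which yields $\Jintrel{I}{R}{(J^n :_R I^{n-1})} = I\bigl((J^n :_R I^{n-1}):_R I\bigr)$.

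There is no real obstacle here: every step is either a direct citation of a prior statement in the excerpt or a one-line colon manipulation. The only point requiring care is the index bookkeeping, namely checking that the hypothesis $n \geq \max\{r-\ell+g+1,1\}$ of the proposition is strong enough to invoke both Theorem~\ref{thm:PU_core} and the Polini--Ulrich remark (both of which only require the strictly weaker bound $n \geq \max\{r-\ell+g,0\}$), and noting that $n \geq 1$ ensures $I^{n-1}$ is a well-defined ideal so that the middle colon expression makes sense.
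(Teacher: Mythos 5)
Your proof is correct and follows essentially the same route as the paper: Theorem~\ref{thm:PU_core} gives the first expression, the Polini--Ulrich remark (with $i=1$) gives the second, the elementary colon identity $(J^n:_R I^n)=((J^n:_R I^{n-1}):_R I)$ gives the third, and the last is the definition of the $I$-basically empty interior. Your observation that the stated bound $n\geq\max\{r-\ell+g+1,1\}$ is stronger than what the equality chain itself requires is accurate; the paper uses that extra margin to add a further remark at the end of its proof (that $\core(I)$ is $I$-basically open because $n-1\geq\max\{r-\ell+g,0\}$), which goes beyond what the proposition as stated asks you to prove.
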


\begin{proof}
Under the hypotheses of Theorems~\ref{thm:PU_core}, \[{\rm core}(I)=(J^{n+1}:_RI^{n})=I(J^n:_R I^n)=I((J^n:_R I^{n-1}):_RI)\] for $n \geq \text{max}\{ r-\ell+g,0\}$.
By the definition of $Jbe$ interior, we see that ${\rm core} (I)=\Jintrel I R {(J^n:I^{n-1})}$. In fact, as long as $n-1 \geq \text{max}\{ r-\ell+g,0\}$, we obtain  $\Jintrel I R {(J^n:I^{n-1})}=\Jintrel I R {{\rm core}(I)}$  implying that ${\rm core}(I)$ is $I$-basically open.
\end{proof}
Dually we obtain:
\begin{thm}\label{thm:hullIbf}
Let $(R, \m)$ be a 
complete local ring  
and $I$ an ideal satisfying ${\rm core}_R(I)=I(J^n:_RI^n)$  for some 
reduction $J$ and some natural number $n \geq \text{max}\{r-\ell+g+1,1\}$, 
then
\[
{\rm hull}^E (0:_E I)=(I(0:_E (J^n:_R I^{n-1})):_E I)=\Jcol I  {(0:_E (J^n:_R I^{n-1}))} E.
\]
\end{thm}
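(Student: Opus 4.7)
The plan is to view this statement as the Matlis-dual shadow of Proposition~\ref{prop:coreIbe}: that proposition rewrites $\core(I)$ in the form $I\cdot((J^n:_RI^{n-1}):_RI)$, and the present theorem should translate this through the core/hull duality into the claimed formula on the side of $E$.

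First I would invoke the core-hull duality (Theorem~\ref{thm:hullexists}) for the integral closure/interior pair. Since $(E/(0:_E I))^\vee\cong I$ inside $R=E^\vee$, the bijection of Theorem~\ref{thm:expreddual} between integral-expansions of $(0:_E I)$ in $E$ and integral-reductions of $I$ in $R$, combined with Lemma~\ref{lemma:dualofintersectionissum} applied to sum versus intersection, yields $(E/\hull^E(0:_E I))^\vee=\core(I)$. Matlis-dualizing once more reduces the theorem to the identity
\[
\hull^E(0:_E I)=(0:_E \core(I)).
\]

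Next I would substitute the hypothesis $\core(I)=IK$ with $K:=(J^n:_R I^{n-1}):_R I$, and use the standard adjointness $(0:_E IK)=((0:_E K):_E I)$ to pull the outer factor of $I$ outside as a colon. The inner annihilator $(0:_E K)$ is the Matlis dual of a colon of ideals, and this is exactly where Theorem~\ref{thm:coldual}(a) applies: specializing that theorem with $M=R$, $L=(J^n:_R I^{n-1})$, and with the ``$J$'' of the theorem taken to be the unit ideal $R$, gives the identity $(0:_E (L:_R I))=I\,(0:_E L)$. Substituting back produces
\[
\hull^E(0:_E I)=\bigl(I\,(0:_E(J^n:_R I^{n-1})):_E I\bigr),
\]
which is exactly the desired formula and is recognized as $\Jcol{I}{(0:_E(J^n:_R I^{n-1}))}{E}$ by Definition~\ref{def:Jcol}.

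The main obstacle I anticipate is purely notational bookkeeping: the symbol $J$ plays two distinct roles in the statement (a minimal reduction of $I$ on the core side, and the generic ideal parameter in the formula $\Jcolsym J$ on the hull side), so one must take care to specialize Theorem~\ref{thm:coldual} with the parameter $J=R$ (rather than the reduction $J$) to extract the needed inner identity. Once that is sorted, the proof is a direct chain of equalities, depending only on the core formula that the theorem hypothesizes and on the duality machinery already established in Sections~\ref{sec:nonresidualdual}--\ref{sec:nonresidualcorehullduality}.
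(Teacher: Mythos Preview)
Your proposal is correct and follows essentially the same route as the paper: both arguments apply core--hull duality (Theorem~\ref{thm:hullexists}) to reduce to computing $(R/\core(I))^\vee$, rewrite $\core(I)=I\bigl((J^n:_R I^{n-1}):_R I\bigr)$, and then Matlis-dualize via Theorem~\ref{thm:coldual}. The only difference is cosmetic: the paper cites Theorem~\ref{thm:coldual} once for the passage from $R/\core(I)$ to $(I(0:_E(J^n:_R I^{n-1})):_E I)$, whereas you split this into the elementary identity $(0:_E IK)=((0:_E K):_E I)$ followed by the instantiation of Theorem~\ref{thm:coldual}(a) with the theorem's ``$J$'' set to $R$, which makes the computation slightly more transparent.
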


\begin{proof}
Given
${\rm core}(I)=I(J^n:_R I^n)$, dually we have
\[
(R/{\rm core}(I))^\vee=(R/(I(J^n:_R I^n))^\vee.
\]
By the core-hull duality Theorem~\ref{thm:hullexists}, 
\[
(R/{\rm core}(I))^\vee ={\rm hull}^E (0:_E I).
\]
Also, we have
\begin{align*}
(R/I(J^n:_R I^n))^\vee &=(R/I((J^n:_R I^{n-1}):_RI))^\vee\\
&=(I(R/(J^n:_RI^{n-1}))^{\vee}:_E I)\\
&=(I(0:_E (J^n:_R I^{n-1})):_E I)
\end{align*}
where the second equality follows by Theorem~\ref{thm:coldual}.
Hence,
\[
{\rm hull}^E (0:_E I)=(I(0:_E (J^n:_RI^{n-1})):_E I)=\Jcol I {(0:_E (J^n:_R I^{n-1}))} E.
\]
\end{proof}

\begin{cor} Let $(R, \m)$ be a Cohen-Macualay local ring which satisfies the conditions in Theorem \ref{thm:hullIbf}, then ${\rm hull}^E (0:_E I)=\Jcol I {(0:_E {\rm core}(I))} E$.

\end{cor}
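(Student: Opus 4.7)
The plan is to show that both ${\rm hull}^E(0:_E I)$ and $\Jcol I {(0:_E {\rm core}(I))} E$ coincide with the $I$-basically full closure $\Jcol I {(0:_E (J^n:_R I^{n-1}))} E$, which is already identified with ${\rm hull}^E(0:_E I)$ by Theorem~\ref{thm:hullIbf}. The key input is Proposition~\ref{prop:coreIbe}, which rewrites the core as an $I$-basically empty interior: ${\rm core}(I) = \Jintrel I R {(J^n:_R I^{n-1})}$. Dualizing this identity will convert the statement about the core of $I$ into a statement about the $I$-basically full closure of $(0:_E (J^n:_R I^{n-1}))$ inside $E$.

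To carry out the dualization, I would apply Lemma~\ref{lem:duality} to the pair operation $p = \Jintrelsym I$, with $A = J^n:_R I^{n-1}$ and $B = R$. The lemma yields $(R/p(A,R))^\vee = p^\dual((R/A)^\vee, E)$, and Theorem~\ref{thm:Jcoldual} identifies $p^\dual$ with $\Jcolsym I$. Writing $(R/A)^\vee = 0:_E A$ and substituting ${\rm core}(I)$ for $p(A,R)$ via Proposition~\ref{prop:coreIbe}, one obtains
\[0 :_E {\rm core}(I) \;=\; \Jcol I {\bigl(0 :_E (J^n:_R I^{n-1})\bigr)} E.\]

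Now apply $\Jcolsym I$ (with respect to $E$) to both sides of this equation. Since $\Jcolsym I$ is a closure operation (Proposition~\ref{jcolnakayama}), it is idempotent, so the right-hand side is unchanged under a further application. Therefore
\[\Jcol I {\bigl(0 :_E {\rm core}(I)\bigr)} E \;=\; \Jcol I {\bigl(0 :_E (J^n:_R I^{n-1})\bigr)} E \;=\; {\rm hull}^E(0 :_E I),\]
where the last equality is Theorem~\ref{thm:hullIbf}. The only mildly delicate step is the dualization in the second paragraph, where one must correctly match the variables in Lemma~\ref{lem:duality} and Theorem~\ref{thm:Jcoldual} to identify $(R/A)^\vee$ with $0 :_E A$ and $\Jintrelsym I^\dual$ with $\Jcolsym I$; the rest is idempotency and a citation of the preceding theorem.
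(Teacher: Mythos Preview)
Your argument is correct, but it takes a more circuitous route than the paper's. You invoke Proposition~\ref{prop:coreIbe} only in the form ${\rm core}(I)=\Jintrel I R{(J^n:_R I^{n-1})}$, then dualize via Lemma~\ref{lem:duality} and Theorem~\ref{thm:Jcoldual}, and finally appeal to idempotency of $\Jcolsym I$ to conclude. The paper instead uses the stronger fact (recorded in the proof of Proposition~\ref{prop:coreIbe} and coming directly from Theorem~\ref{thm:PU_core} with the index shift $n\mapsto n-1$) that under the stated hypotheses one actually has ${\rm core}(I)=(J^n:_R I^{n-1})$ for $n-1\ge\max\{r-\ell+g,0\}$, not merely its $I$-basically empty interior. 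With this in hand, the corollary is an immediate substitution into Theorem~\ref{thm:hullIbf}: no dualization step, no idempotency argument.

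What your approach buys is that it never uses the equality ${\rm core}(I)=(J^n:_R I^{n-1})$ itself, only its $I$-basically empty interior formulation; so it illustrates how the duality machinery alone already forces the result. The paper's approach is a one-line substitution once the sharper identity is noted.
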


\begin{proof}
Since ${\rm core}(I)=(J^n:_RI^{n-1})$ for $n-1 \geq \text{max}\{r-\ell+g,0\}$ by Theorem \ref{thm:hullIbf}
we obtain ${\rm hull}^E (0:_E I)=\Jcol I {(0:_E {\rm core}(I))} E$.
\end{proof}

\subsection{Formulas for tight core/hull}

Fouli, Vassilev and Vraciu  \cite{FVV*core} devised a formula for the $*\core$ of some ideals in normal local rings of characteristic $p>0$.  They discovered three sufficient
conditions \cite[Theorems 3.7, 3.10 and 3.12]{FVV*core} such that the $*\core (I) =I(J :R I)$, giving us the
following Proposition:

\begin{prop}\label{cor:*corebe}
Let $(R, \m)$ be a normal local ring of characteristic $p>0$ with perfect residue field.  Suppose one of the following holds:
\begin{enumerate}
    \item $R$ is Cohen-Macaulay and excellent with test ideal $\tau$ and $\dim R \geq 2$.  Let $x_1, x_2,\ldots,x_d$ be part of a system of parameters and $J=(x_1^t,x_2^t,x_3, \ldots,x_d)$ where $x_1, x_2 \in \tau$ and $t \geq 3$.  Let $J \subseteq I \subseteq J^*$.
    \item The test ideal $\tau=\m$ and $J$ is any minimal $*$-reduction of $I$.
    \item The test ideal $\tau$ is $\m$-primary and $J'$ is a minimal $*$-reduction of $I'$ and $J=(J')^{[q]}$ and $I=(I')^{[q]}$ for large $q=p^e$.
\end{enumerate}
Then \[*\core (I)=I(J:_R I)=\Jintrel{I}{R}{J}.\]
\end{prop}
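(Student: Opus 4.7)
The proof is essentially a direct translation between two notations, combined with citing the appropriate theorems from \cite{FVV*core}. The plan is as follows.

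First, I would invoke the three theorems of Fouli, Vassilev, and Vraciu separately. Under hypothesis (1), I apply \cite[Theorem 3.7]{FVV*core} to the system of parameters $x_1,\dots,x_d$ with $x_1, x_2 \in \tau$ and the ideal $J = (x_1^t, x_2^t, x_3, \dots, x_d)$, which yields $*\core(I) = I(J :_R I)$ for any $I$ with $J \subseteq I \subseteq J^*$. Under hypothesis (2), I apply \cite[Theorem 3.10]{FVV*core}, which gives the same formula when $\tau = \m$ and $J$ is any minimal $*$-reduction of $I$. Under hypothesis (3), I apply \cite[Theorem 3.12]{FVV*core}, handling the case of $\m$-primary test ideal via large Frobenius powers. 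In all three cases the conclusion is $*\core(I) = I(J :_R I)$.

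The remaining identity is purely definitional: by Definition~\ref{def:jbei}, for any ideals $I, J$ of $R$ we have
\[
\Jintrel{I}{R}{J} = I(J :_R I),
\]
where here the role of the ambient module is played by $R$, the submodule is $J$, and the ideal being used for the colon-product operation is $I$. Combining this equality with the formula from \cite{FVV*core} in each of the three cases gives
\[
*\core(I) = I(J :_R I) = \Jintrel{I}{R}{J},
\]
which is the desired statement. There is essentially no obstacle here beyond correctly matching the hypotheses to the three cited theorems; the novelty is purely the reinterpretation of the known formula for $*\core(I)$ as a basically empty interior, linking our duality framework to the existing tight closure core literature.
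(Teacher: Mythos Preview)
Your proposal is correct and matches the paper's approach exactly: the paper simply cites \cite[Theorems 3.7, 3.10 and 3.12]{FVV*core} to obtain $*\core(I)=I(J:_R I)$ under each of the three hypotheses, and then identifies $I(J:_R I)$ with $\Jintrel{I}{R}{J}$ by Definition~\ref{def:jbei}.
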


We similarly make use of Theorem~\ref{thm:coldual} to get a formula for the $*\hull$ of certain submodules of the injective hull of the residue field.  As with the Corollary~\ref{cor:*corebe}, we note the connection with $I$-basically full closure.

 \begin{thm}\label{thm:*hullform}
Let $(R, \m)$ be a complete normal local ring of characteristic $p>0$ 
and $I$ is an ideal satisfying $*\core_R(I)=I(J:_R I)$ for some $*$-reduction $J$.
Then \[*\hull^E(0:_E I)=\Jcol I {(0:_E J)}{E}.\]  
\end{thm}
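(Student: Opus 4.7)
The proof should follow the template of Theorem~\ref{thm:hullIbf}. The hypothesis gives $*\core_R(I) = I(J:_R I)$, and the core--hull duality (Theorem~\ref{thm:hullexists}) applied to tight closure (a Nakayama closure) and its dual tight interior produces
\[
*\hull^E(0:_E I) = (R/*\core_R(I))^\vee = (R/I(J:_R I))^\vee.
\]
So the task reduces to showing $(R/I(J:_R I))^\vee = (I(0:_E J):_E I) = \Jcol{I}{(0:_E J)}{E}$.

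The key ingredient is the identity $(0:_E(J:_R I)) = I(0:_E J)$, which I would extract from a single application of Theorem~\ref{thm:coldual}(a). Taking $M = R$, $L = J$, and letting the ideals named in the theorem be the unit ideal and $I$ (in the positions of ``$J$'' and ``$I$'' in that theorem's statement, respectively), we get $A = (R/J)^\vee = (0:_E J)$, and the formula $(M/(J_{\mathrm{thm}} L :_M I_{\mathrm{thm}}))^\vee = I_{\mathrm{thm}}(A:_B J_{\mathrm{thm}})$ collapses to $(R/(J:_R I))^\vee = I(0:_E J)$, which is exactly the desired identity.

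With this identity in hand, the standard colon adjunction $(0:_E IN) = ((0:_E N):_E I)$ for submodules of $E$ closes the argument:
\[
(R/I(J:_R I))^\vee = (0:_E I(J:_R I)) = ((0:_E(J:_R I)):_E I) = (I(0:_E J):_E I) = \Jcol{I}{(0:_E J)}{E}.
\]
There is no genuine obstacle here; the only subtlety is careful bookkeeping of the two unrelated pairs of ideals both named $(I, J)$---one pair being the ideal $I$ and its $*$-reduction $J$ from the statement of Theorem~\ref{thm:*hullform}, the other being the generic ideal parameters of Theorem~\ref{thm:coldual}---when invoking the latter.
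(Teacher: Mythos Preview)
Your proposal is correct and follows essentially the same approach as the paper: both apply the core--hull duality (Theorem~\ref{thm:hullexists}) to reduce to computing $(R/I(J:_R I))^\vee$, and both use Theorem~\ref{thm:coldual} to identify this with $(I(0:_E J):_E I)=\Jcol{I}{(0:_E J)}{E}$. The only difference is cosmetic: the paper invokes Theorem~\ref{thm:coldual} once to pass directly from $R/I(J:_R I)$ to the answer, whereas you split the computation into a colon-adjunction step $(0:_E I(J:_R I)) = ((0:_E(J:_R I)):_E I)$ followed by your application of Theorem~\ref{thm:coldual}(a) with the theorem's ``$J$'' equal to the unit ideal.
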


\begin{proof}
The proof is quite similar to that of Theorem~\ref{thm:hullIbf}.  The main differences are that the closure operation is tight closure not integral closure, and the power attached to $J$ and $I$ is 1 rather than $n$.
By the core-hull duality Theorem~\ref{thm:hullexists}, 
\[
(R/({*\core}_R(I)))^\vee =*\hull^E (0:_E I).
\]
The dual of $I(J:_R I)$ is
\[
(R/(I(J:_R I)))^\vee =(I(R/J)^{\vee}:_E I)=(I(0:_E J):_E I)=\Jcol I {(0:_E J)}{E}.
\]
Hence, we similarly obtain
\[
{*\hull}^E (0:_E I)=\Jcol I {(0:_E J)}{E}. \qedhere
\]
\end{proof}

\section{Basically full core and basically empty hull, with examples}\label{sec:examples}

 By Proposition \ref{jcolnakayama}, the $J$-basically full closure ($\Jcolsym J$) is a Nakayama closure on finitely generated modules over a Noetherian local ring $R$ and similarly by Proposition \ref{pr:jbenakayama} $J$-basically empty interior ($\Jintrelsym J$) is a Nakayama interior on Artinian modules over a Noetherian local ring $R$. 
 Hence, minimal $\Jcolsym J$-reductions and maximal $\Jintrelsym J$-expansions exist and so we obtain the $\Jcolsym J$-core  as the intersection of all minimal $\Jcolsym J$-reductions and similarly the $\Jintrelsym J$-hull as the sum of all maximal $\Jintrelsym J$-expansions as developed in Section \ref{sec:nonresidualcorehullduality}.

In this section we will exhibit some examples where we determine the $\Jcolsym \m$-core and $\Jintrelsym \m$-hull of ideals in $k[[t^2,t^3]]$.
In the following examples we compute $\Jintrel{J}{R}{I}$ and $\Jcol{J}{I}{R}$ where $J=\m$ or an  ideal properly contained in $\m$.

The ideals of $R=k[[t^2,t^3]]$ were characterized in \cite[Proposition 4.1]{Va-str} and the lattice following \cite[Proposition 4.1]{Va-str} is illustrated here in Figure \ref{fig:lat}, where the boxed ideals represent $|k|$ incomparable ideals, one for each $a \in k$.

\begin{figure}[h] 
\[\xymatrix{ &R \ar@{-}[d] & \\
 &(t^2,t^3) \ar@{-}[d] \ar@{-}[dl] \\
*+[F]{ (t^2+at^3)} \ar@{-}[dr] &(t^3,t^4) \ar@{-}[d] \ar@{-}[dr]  \\
 &(t^4,t^5) \ar@{-}[d] \ar@{-}[dl] & *+[F]{ (t^3+at^4)} \ar@{-}[dl] \\
*+[F]{ (t^4+at^5)} \ar@{-}[dr] &(t^5,t^6) \ar@{-}[d] \ar@{-}[dr] \\
& \vdots\ar@{-}[d] & \vdots\ar@{-}[dl] \\
&0 & \\}\]
\caption{The lattice of ideals of $k[[t^2,t^3]]$.}
\label{fig:lat}
\end{figure}
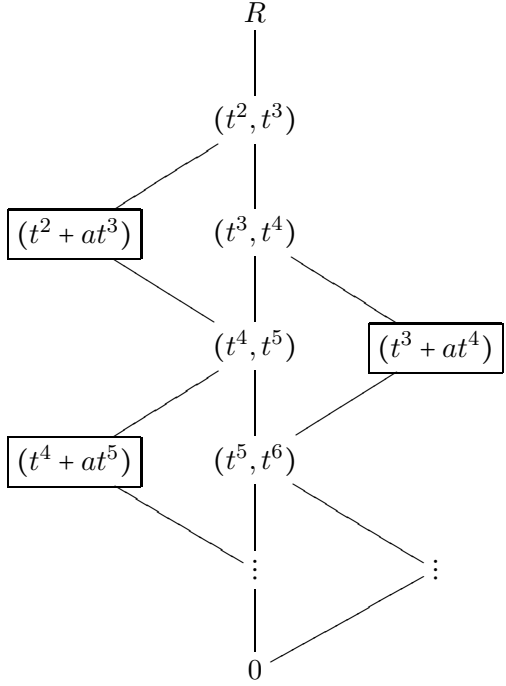
For the computations below, we will use the following lemma which illustrates the relevant colon ideals we need to compute to understand the examples.
\begin{lemma} \label{lem:coloncomp}
Let $R=k[[t^2,t^3]]$. Let $r\geq 2$ be a fixed integer and $a \in k$.  We have the following:
\begin{enumerate}
    \item $(R:_R (t^r,t^{r+1}))=R$.
    \item $(0:_R(t^r,t^{r+1}))=(0)$.
    \item $((t^n,t^{n+1}):_R (t^r, t^{r+1})) =\begin{cases} R \text{ if }  r \geq n \geq 2 \\ (t^2,t^3) \text{ if } r=n-1 \geq 2\\
(t^{n-r},t^{n-r+1}) \text{ if } 2 \leq r \leq n-2.
\end{cases}$
    \item  $((t^n+at^{n+1}):_R (t^r, t^{r+1})) =\begin{cases} R \text{ if }  r \geq n+2, n \geq 2 \\ (t^2,t^3) \text{ if } r=n+1 \geq 3\\
(t^{n-r+2},t^{n-r+3}) \text{ if } 2 \leq r \leq n.
\end{cases}$
\end{enumerate}
\end{lemma}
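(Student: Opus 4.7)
The plan is to work inside the ambient power series ring $k[[t]]$, exploiting the $t$-adic order $\mathrm{ord}_t$ and the description $R=k[[t^2,t^3]]=\{f\in k[[t]] : \text{the coefficient of }t\text{ in }f\text{ vanishes}\}$. A first preliminary observation is that for each $n\ge 2$, $(t^n,t^{n+1})_R=\{f\in R:\mathrm{ord}_t(f)\ge n\}$, since every monomial $t^m$ with $m\ge n$ factors as $t^{m-n}\cdot t^n$ or $t^{m-n-1}\cdot t^{n+1}$ with at least one factor of exponent $0$ or $\ge 2$, hence in $R$. Parts (1) and (2) are then immediate: $(t^r,t^{r+1})\subseteq R$ gives $R:_R(t^r,t^{r+1})=R$, and $R$ is a domain with $(t^r,t^{r+1})\neq 0$, so $0:_R(t^r,t^{r+1})=0$.

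For part (3), $x\in R$ lies in $((t^n,t^{n+1}):_R(t^r,t^{r+1}))$ iff $\mathrm{ord}_t(xt^r)\ge n$ and $\mathrm{ord}_t(xt^{r+1})\ge n$; the former is the tighter constraint and reads $\mathrm{ord}_t(x)\ge n-r$. The three cases of the formula correspond to $n-r\le 0$ (no constraint, colon $R$), $n-r=1$ (which forces $\mathrm{ord}_t(x)\ge 2$ because $R$ has no order-$1$ elements, colon $\m$), and $n-r\ge 2$ (colon $(t^{n-r},t^{n-r+1})$).

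The substantive part is (4), for which I first establish the structural identity
\[
(t^n+at^{n+1})_R \;=\; k\cdot(t^n+at^{n+1})\;+\;(t^{n+2},t^{n+3}).
\]
For $\supseteq$, the geometric series $t^2/(1+at)=\sum_{i\ge 0}(-a)^{i}t^{i+2}$ and $t^3/(1+at)=\sum_{i\ge 0}(-a)^{i}t^{i+3}$ both have vanishing $t$-coefficient and so lie in $R$, while $\bigl(t^2/(1+at)\bigr)(t^n+at^{n+1})=t^{n+2}$ and $\bigl(t^3/(1+at)\bigr)(t^n+at^{n+1})=t^{n+3}$ by construction. For $\subseteq$, any $g\in R$ has $t$-coefficient zero, so the product $g(t^n+at^{n+1})$ has $t^n$-coefficient $g(0)$ and $t^{n+1}$-coefficient $a\cdot g(0)$; hence it differs from $g(0)\cdot(t^n+at^{n+1})$ by an element of $(t^{n+2},t^{n+3})$. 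The striking consequence I will actually use is that $(t^n+at^{n+1})_R$ contains \emph{no} element of $t$-adic order exactly $n+1$.

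With the structural identity in hand, (4) reduces to order-and-coefficient bookkeeping. Setting $k=\mathrm{ord}_t(x)$, the element $xt^s$ lies in $(t^n+at^{n+1})$ precisely when $k+s\ge n+2$, or $k+s=n$ with the coefficient ratio $1:a$ satisfied; the intermediate order $n+1$ is strictly forbidden. When $r\ge n+2$ both $t^r$ and $t^{r+1}$ already lie in $(t^{n+2},t^{n+3})$, giving colon $R$. When $r=n+1$, the condition on $xt^{n+1}$ rules out $k=0$ and permits $k\ge 2$, yielding $(t^2,t^3)$. The main obstacle, and the reason the formula for $2\le r\le n$ is $(t^{n-r+2},t^{n-r+3})$ rather than something larger, is that both candidate small orders $k=n-r$ and $k=n-r+1$ must be excluded: the former makes $xt^{r+1}$ have forbidden order $n+1$, and the latter makes $xt^r$ have forbidden order $n+1$. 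Only $k\ge n-r+2$ survives, giving the stated colon.
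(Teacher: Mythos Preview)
Your proof is correct and follows essentially the same approach as the paper's: both arguments rest on the $t$-adic order characterization $(t^n,t^{n+1})=\{f\in R:\mathrm{ord}_t(f)\ge n\}$ and on the key fact that $(t^n+at^{n+1})$ contains every $t^s$ with $s\ge n+2$ but no element of order exactly $n+1$. Your version is somewhat more explicit---you prove the structural identity $(t^n+at^{n+1})_R=k\cdot(t^n+at^{n+1})+(t^{n+2},t^{n+3})$ via the geometric-series units $t^2/(1+at),\,t^3/(1+at)\in R$, whereas the paper simply asserts the needed containments and then handles the main case of (4) by writing ``similar to the way we argued for the third case of (3)'' and appealing to the ideal lattice---but the substance is the same order bookkeeping, with the same exclusion of the two candidate orders $n-r$ and $n-r+1$ in the range $2\le r\le n$.
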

\begin{proof}
Recall that for any ideal $I_1 \subseteq I_2$, $(I_2:_RI_1)=R$.  The equality exhibited in (1) follows from this fact.  The equality in (2) follows as $R$ is a domain. 

For $r \geq n$, $(t^r,t^{r+1}) \subseteq (t^n,t^{n+1})$, implying the first case of (3).  However, for $2 \leq r \leq n-1$, $(t^r,t^{r+1}) \nsubseteq (t^n,t^{n+1})$.  Since $t \notin R$, when $r=n-1$, \[(t^2,t^3) = ((t^n,t^{n+1}):_R(t^{n-1},t^n))\] giving the second case in (3).  When $2 \leq r \leq n-2$, $n-r \geq n-(n-2) \geq 2$ implying that $(t^{n-r}, t^{n-r+1}) \subseteq ((t^n,t^{n+1}):_R(t^{r},t^{r+1})).$  Since $(t^{n-r-i}+at^{n-r-i+1}) \cdot t^r=t^{n-i}+at^{n-i+1} \notin (t^n,t^{n+1})$ for any $i\geq 1$ or any $a \in k$, from the lattice in Figure \ref{fig:lat}, we see that \[(t^{n-r}, t^{n-r+1}) = ((t^n,t^{n+1}):_R(t^{r},t^{r+1}))\] giving us the third case of (3).

For (4), note that the ideals $(t^n+at^{n+1})$ contain the monomials $t^s$ for $s \geq {n+2}$; however, $t^{n+1} \notin (t^n+at^{n+1})$.  
If $r \geq n+2$, then $((t^n+at^{n+1}):_R (t^r, t^{r+1}))=R$ as $(t^r,t^{r+1}) \subseteq (t^n+at^{n+1})$ giving us the first case of (4). As long as $2 \leq r\leq n$, similar to the way we argued for the third case of (3) we see that $((t^n+at^{n+1}):_R (t^r, t^{r+1}))=(t^{n-r+2},t^{n-r+3})$ giving us the third case of (4).    That leaves us with $r=n+1$ and as with the second case of (3), $((t^n+at^{n+1}):_R (t^r, t^{r+1}))=(t^2,t^3)$ giving us the second case of (4).  
\end{proof}

\begin{example}\label{ex:maxhullcore}
Let $R=k[[t^2,t^3]]$ with $\m=(t^2,t^3)$.  
We will compute  $\Jintrel{\m}{R}{I}$ and $\Jintrelsym{\m}{\hull}(I)$, followed by  $\Jcol{\m}{I}{R}$ and $\Jcolsym{\m}{\core}(I)$,   for all ideals $I$. 

From the lattice in Figure \ref{fig:lat}, we see that the nonzero nonunit ideals of $R$ are either principal of the form $(t^n + at^{n+1})$ for $a\in k$ and $n\geq 2$, or have the form $(t^n, t^{n+1})$ for $n\geq 2$. We use Lemma \ref{lem:coloncomp} to determine $\Jintrel{\m}RI$ for all ideals $I$ in $R$.  By (1) and (2) we obtain $\Jintrel{\m}{R}{R}=\m(R:_R:\m)=\m$ 
and $\Jintrel{\m}{R}0=0$. If $I=(t^n+at^{n+1})$ with $n \geq 2$ and $a\in k$ by (4) of Lemma \ref{lem:coloncomp}, 
\[\Jintrel{\m}{R}{I}=\m (I:_R\m)=(t^2,t^3)(t^n,t^{n+1})=(t^{n+2},t^{n+3})\]  
If $I=(t^n,t^{n+1})$ for $n \geq 2$, by (3) of Lemma \ref{lem:coloncomp},
\[\Jintrel{\m}{R}{I}=\m (I:_R \m)=\begin{cases}
(t^{n},t^{n+1}) \text{ if } n=2 \text{ or } n \geq 4  \\
(t^{4},t^{5}) \text{ if } n=3.\\
\end{cases}\]

By definition
\[
\Jintrelsym{\m}\hull(I)=\sum\limits_{\substack{J\supseteq I \\ \Jintrel{\m}{R}{I}=\Jintrel{\m}{R}{J}}}J.
\]

Since $\Jintrel{\m}{R}{R}=\m$,
\[\Jintrelsym{\m}\hull(R)=R \text{ and } \Jintrelsym{\m}\hull( \m )=R+\m=R.\]

The next ideals to consider in the lattice of Figure \ref{fig:lat} are $I=(t^2+at^3)$ for all $a \in k$, $I=(t^3,t^4)$ and  $I=(t^4,t^5)$.  By our computations above, we see that $\Jintrel{\m}RI=(t^4,t^5)$ for each of these possible $I$.  Thus
\[\Jintrelsym{\m}\hull(t^2+at^3)=(t^2+at^3),\ \Jintrelsym{\m}\hull(t^3,t^4)=(t^3,t^4) \text{ and }
\]
\[
\Jintrelsym{\m}\hull(t^{4},t^{5})=(t^3,t^4)+(t^4,t^5)+\sum\limits_{a \in k} (t^2+at^{3})=(t^2,t^{3}).
\]
From the lattice in Figure \ref{fig:lat}, for $n \geq 3$ and $a \in k$, 
\[\Jintrelsym{\m}\hull(t^n+at^{n+1})=(t^n+at^{n+1})
\] and for $n \geq 5$

\[
\Jintrelsym{\m}\hull(t^{n},t^{n+1})=(t^{n},t^{n+1})+\sum\limits_{a \in k} (t^{n-2}+at^{n-1})=(t^{n-2},t^{n-1}).
\]
Finally, $\Jintrelsym{\m}\hull(0)=0$ since the only ideal $I$ with $\Jintrel{\m}RI=0$ is $I=(0)$.

If $I=(t^n+at^{n+1})$ or $(t^n,t^{n+1})$, for $n \geq 2$ by (3) of Lemma \ref{lem:coloncomp}, 
\[\Jcol{\m}{I}{R}=(\m I:_R:\m)=(t^{n+2},t^{n+3}):_R(t^2,t^3)=(t^n,t^{n+1}).\]

Since
\[
{\Jcolsym{\m}}{\core}(I)=\bigcap\limits_{\substack{J\supseteq I \\ \Jcol{\m}{R}{I}=\Jcol{\m}{R}{J}}}J,
\]
we see that for $n \geq 2$,
\[
{\Jcolsym{\m}}{\core}(t^{n},t^{n+1})=(t^n,t^{n+1}) \cap \bigcap\limits_{a \in k} (t^n+at^{n+1})=(t^{n+2},t^{n+3}).
\]
 Again considering the lattice above; the only ideals $J \subseteq I$ with ideals $\Jcol{\m}{I}{R}=\Jcol{\m}{J}{R}$ are $I=R$, $I=0$  or $I=(t^n+at^{n+1})$ for $n\geq 2$, implying that  
 \[ {\Jcolsym{\m}}{\core}(I)=I\] for such ideals $I$. 
 \end{example}

 \begin{example}\label{ex:2nd}
Continuing to work in the ring $R=k[[t^2,t^3]]$, let $J=(t^r,t^{r+1})$ for any fixed $r \geq 3$. We will compute $\Jintrel{J}{R}{I}$ and $\Jintrelsym{J}{\hull}(I)$, followed by $\Jcol{J}{I}{R}$  and $\Jcolsym{J}{\core}(I)$,  for all ideals $I$.

By multiplying the ideals obtained by the colon ideals given in Lemma \ref{lem:coloncomp}, from (1) and (2) we obtain $\Jintrel{J}{R}{R}=J \cdot R=J$, $\Jintrel{J}{R}{0}=J \cdot 0=0$.  When $I=(t^n+at^{n+1})$ and $n \geq 2$,   from Lemma \ref{lem:coloncomp} (4) we obtain

\begin{equation}
\Jintrel{J}{R}{I}=J (I:_R J)=\begin{cases} (t^r,t^{r+1})R=(t^r,t^{r+1}) \text{ if } {\cvl r \geq n+2, n \geq 2} \\
(t^r,t^{r+1})(t^2,t^3)=(t^{r+2},t^{r+3}) \text{ if }{\cvl r=n+1 \geq 3}\\
(t^r,t^{r+1})(t^{n-r+2},t^{n-r+3})=(t^{n+2},t^{n+3}) \text{ if } {\cvl 2 \leq r \leq n.}\\
\end{cases}
\label{eq:prinJbe}
\tag{$\ast$}
\end{equation}

When $I=(t^n,t^{n+1})$ with $n \geq 2$, from Lemma \ref{lem:coloncomp} (3) we  obtain

\begin{equation}
\Jintrel{J}{R}{I}=J (I:_R J)=\begin{cases} (t^r,t^{r+1})R=(t^r,t^{r+1}) \text{ if } {\cvl r \geq n \geq 2} \\
(t^r,t^{r+1})(t^2,t^3)=(t^{r+2},t^{r+3}) \text{ if } {\cvl r=n-1 \geq 2}\\
(t^r,t^{r+1})(t^{n-r},t^{n-r+1})=(t^{n},t^{n+1}) \text{ if }  {\cvl 2 \leq r \leq n-2.}\\
\end{cases} 
\label{eq:2genJbe}
\tag{$\ast\ast$}
\end{equation}

We have
\[
\Jintrelsym{J}\hull(I)=\sum\limits_{\substack{J\supseteq I \\ \Jintrel{J}{R}{I}=\Jintrel{J}{R}{K}}}K, \label{eq:Jbehull}\tag{$\dagger$}
\]
as in Example \ref{ex:maxhullcore}. 
Since $(t^r,t^{r+1})=J=\Jintrel{J}{R}{R}$, we see that for any ideal $I$ with $R \supseteq I \supseteq (t^r,t^{r+1})$, then $\Jintrel{J}RR=\Jintrel{J}RI$ and for such an $I$, \[R=\Jintrelsym{J}\hull (I)= \sum\limits_{R \supseteq K\supseteq I }K.\]  By the computations above, we see that the ideals which satisfy these conditions are $I=R$,  $I=(t^n,t^{n+1})$ for $2 \leq n \leq r$ and $(t^n+at^{n+1})$, $2 \leq n\leq r-2$ when $r \geq 4$.

From the hierarchy of the lattice in Figure \ref{fig:lat}, the next ideals to consider are $I=(t^{r-1}+at^r)$ and $(t^r+at^{r+1})$ for all $a \in k$ and $(t^{r+1},t^{r+2})$.  We determine  that for all $a \in k$, \begin{align*}(t^{r+2},t^{r+3})&=\Jintrel JR{(t^{r-1}+at^r)}=\Jintrel JR{(t^{r}+at^{r+1})} \text{ by \eqref{eq:prinJbe}} \\
&=\Jintrel JR{(t^{r+1},t^{r+2})}=\Jintrel JR{(t^{r+2},t^{r+3})} \text{ by \eqref{eq:2genJbe}}.\end{align*} 
Considering the lattice in Figure \ref{fig:lat}, we can easily see that $\Jintrelsym{J}\hull(I)=I$ for $I=(t^{r-1}+at^r)$, $I=(t^r+at^{r+1})$ for all $a \in k$ and $\Jintrelsym{J}\hull(I)=(t^{r-1},t^r)$ for $I=(t^{r+1},t^{r+2})$ or $I=(t^{r+2},t^{r+3})$ by Equation \eqref{eq:Jbehull}. 

If $n-2 > r$, using \eqref{eq:Jbehull} in conjunction with \eqref{eq:prinJbe} and \eqref{eq:2genJbe}, we obtain 
\[
\Jintrelsym{J}\hull(t^{n},t^{n+1})=(t^{n-2},t^{n-1}) \text{ and } \Jintrelsym{J}\hull(t^{n}+at^{n+1})=(t^n+at^{n+1}).
\]  As in Example \ref{ex:maxhullcore}, $\Jintrelsym{J}\hull (0)=0$. 

If $r \geq 3$ and $I=(t^n+at^{n+1})$ with $a \in k$ or $I=(t^n,t^{n+1})$, for $n \geq 2$, by Lemma \ref{lem:coloncomp} (4) 
\[\Jcol{J}{I}{R}=(J I:_R:J)=(t^{n+r},t^{n+r+1}):_R(t^r,t^{r+1})=(t^n,t^{n+1}).\]
Thus considering the lattice in Figure \ref{fig:lat} we obtain,
\[
\Jcolsym{J}{\core}(t^n,t^{n+1})=\bigcap\limits_{a \in k}(t^n+at^{n+1})=(t^{n+2},t^{n+3}),
\]
and 
\[
\Jcolsym{J}{\core}(I)=I,
\]
for $I=R$, $I=(t^n+at^{n+1})$ for any $a \in k$ and $n\geq 2$ and $I=0$
by Lemma \ref{lem:coloncomp}, concluding the example.
\end{example}

\section*{Acknowledgment}
We thank Alessandra Costantini for a conversation that led to the discovery and inclusion of Theorem~\ref{thm:hullIbf}. We also wish to express our gratitude to the referees for their suggestions which greatly improved the readability of the paper.

\bibliographystyle{amsalpha}
\bibliography{BErefs}

\providecommand{\bysame}{\leavevmode\hbox to3em{\hrulefill}\thinspace}
\providecommand{\MR}{\relax\ifhmode\unskip\space\fi MR }
\providecommand{\MRhref}[2]{%
  \href{http://www.ams.org/mathscinet-getitem?mr=#1}{#2}
}
\providecommand{\href}[2]{#2}
\begin{thebibliography}{HLNR09}

\bibitem[CPU02]{CPU2002}
Alberto Corso, Claudia Polini, and Bernd Ulrich, \emph{Core and residual
  intersections of ideals}, Trans. Amer. Math. Soc. \textbf{354} (2002), no.~7,
  2579--2594.

\bibitem[Dao21]{Dao-colon}
Hailong Dao, \emph{On colon operations and special types of ideals}, Palest. J.
  Math. \textbf{10} (2021), no.~2, 383--388.

\bibitem[Eps05]{nme*spread}
Neil Epstein, \emph{A tight closure analogue of analytic spread}, Math. Proc.
  Cambridge Philos. Soc. \textbf{139} (2005), no.~2, 371--383.

\bibitem[Eps10]{nme-sp}
\bysame, \emph{Reductions and special parts of closures}, J. Algebra
  \textbf{323} (2010), no.~8, 2209--2225.

\bibitem[ER21]{nmeRG-cidual}
Neil Epstein and Rebecca R.G., \emph{Closure-interior duality over complete
  local rings}, Rocky Mountain J. Math. \textbf{51} (2021), no.~3, 823--853.

\bibitem[ERV20]{ERGV-chdual}
Neil Epstein, Rebecca R.G., and Janet Vassilev, \emph{Nakayama closures,
  interior operations, and core-hull duality}, {arXiv}:2007.12209, preprint,
  2020.

\bibitem[ES14]{nmeSc-tint}
Neil Epstein and Karl Schwede, \emph{A dual to tight closure theory}, Nagoya
  Math. J. \textbf{213} (2014), 41--75.

\bibitem[FPU08]{FPUcoreform}
Louiza Fouli, Claudia Polini, and Bernd Ulrich, \emph{The core of ideals in
  arbitrary characteristic}, Michigan Math. J. \textbf{57} (2008), 305--319,
  Special volume in honor of Melvin Hochster.

\bibitem[FV10]{FoVa-core}
Louiza Fouli and Janet~C. Vassilev, \emph{The $cl$-core of an ideal}, Math.
  Proc. Cambridge Philos. Soc. \textbf{149} (2010), no.~2, 247--262.

\bibitem[FVV11]{FVV*core}
Louiza Fouli, Janet~C. Vassilev, and Adela~N. Vraciu, \emph{A formula for the
  {$*$}-core of an ideal}, Proc. Amer. Math. Soc. \textbf{139} (2011), no.~12,
  4235--4245.

\bibitem[HLNR09]{HLNR}
Jooyoun Hong, Heisook Lee, Sunsook Noh, and David Rush, \emph{Full ideals},
  Comm. Algebra \textbf{37} (2009), no.~8, 2627--2639.

\bibitem[HRR02]{HRR-bf}
William~J. Heinzer, Louis~J. Ratliff, Jr., and David~E. Rush, \emph{Basically
  full ideals in local rings}, J. Algebra \textbf{250} (2002), no.~1, 371--396.
  \MR{1898390}

\bibitem[HS95]{HuSw-core}
Craig Huneke and Irena Swanson, \emph{Cores of ideals in {$2$}-dimensional
  regular local rings}, Michigan Math. J. \textbf{42} (1995), no.~1, 193--208.

\bibitem[HS06]{HuSw-book}
Craig Huneke and Irena Swanson, \emph{Integral closure of ideals, rings, and
  modules}, London Math. Soc. Lecture Note Ser., vol. 336, Cambridge Univ.
  Press, Cambridge, 2006.

\bibitem[HT05]{HuTrcore}
Craig Huneke and Ng\^{o}~Vi\^{e}t Trung, \emph{On the core of ideals}, Compos.
  Math. \textbf{141} (2005), no.~1, 1--18.

\bibitem[Kru68]{Krull-IT}
Wolfgang Krull, \emph{Idealtheorie}, Ergebnisse der Mathematik und ihrer
  Grenzgebiete, Band 46, Springer-Verlag, Berlin-New York, 1968, Zweite,
  erg\"{a}nzte Auflage.

\bibitem[Lam99]{Lam99}
T.~Y. Lam, \emph{Lectures on modules and rings}, Graduate Texts in Mathematics,
  vol. 189, Springer-Verlag, New York, 1999. \MR{1653294}

\bibitem[NR54]{NR-idealreductions}
D.~G. Northcott and D.~Rees, \emph{Reductions of ideals in local rings}, Proc.
  Cambridge Philos. Soc. \textbf{50} (1954), 145--158.

\bibitem[PR19]{PeRG}
Felipe P\'{e}rez and Rebecca R.G., \emph{Characteristic-free test ideals},
  {arXiv}:1907.02150, to appear in Trans. Amer. Math. Soc. Ser. B, 2019.

\bibitem[PU05]{PUcoreform}
Claudia Polini and Bernd Ulrich, \emph{A formula for the core of an ideal},
  Math. Ann. \textbf{331} (2005), no.~3, 487--503.

\bibitem[RR77]{RRcover}
L.~J. Ratliff, Jr. and David~E. Rush, \emph{Notes on ideal covers and
  associated primes}, Pacific J. Math. \textbf{73} (1977), no.~1, 169--191.

\bibitem[RR78]{RRoperationintro}
\bysame, \emph{Two notes on reductions of ideals}, Indiana Univ. Math. J.
  \textbf{27} (1978), no.~6, 929--934.

\bibitem[RR93]{RRdeltamod}
L.~J. Ratliff, Jr. and D.~E. Rush, \emph{{$\Delta$}-reductions of modules},
  Comm. Algebra \textbf{21} (1993), no.~8, 2667--2685. \MR{1222737}

\bibitem[RR02]{RRdelta}
\bysame, \emph{Asymptotic primes of delta closures of ideals}, Comm. Algebra
  \textbf{30} (2002), no.~3, 1513--1531.

\bibitem[RS88]{RS-core}
D.~Rees and Judith~D. Sally, \emph{General elements and joint reductions},
  Michigan Math. J. \textbf{35} (1988), no.~2, 241--254.

\bibitem[Rus13]{Rush-contracted}
David Rush, \emph{Contracted, $\mathfrak{m}$-full and related classes of ideals
  in local rings}, Glasg. Math. J. \textbf{55} (2013), no.~3, 669--675.

\bibitem[Sha94]{Shah-equimultiple}
Kishor Shah, \emph{On equimultiple ideals}, Math. Z. \textbf{215} (1994),
  no.~1, 13--24.

\bibitem[V{\'{a}}m68]{Vam}
Peter V{\'{a}}mos, \emph{The dual of the notion of ``finitely generated''}, J.
  London Math. Soc. \textbf{43} (1968), 643--646.

\bibitem[Vas05]{Vascon05}
Wolmer Vasconcelos, \emph{Integral closure: Rees algebras, multiplicities,
  algorithms}, Springer Monographs in Mathematics, Springer-Verlag, Berlin,
  2005. \MR{2153889}

\bibitem[Vas09]{Va-str}
Janet Vassilev, \emph{Structure on the set of closure operations of a
  commutative ring}, J. Algegra \textbf{321} (2009), 2737--2753.

\bibitem[Vas14]{Va-*full}
\bysame, \emph{$\mathfrak{m}$-full and basically full ideals in rings of
  characteristic $p$}, Rocky Mountain J. Math. \textbf{44} (2014), no.~2,
  691--704.

\bibitem[VV09]{VaVr}
Janet Vassilev and Adela Vraciu, \emph{When is the tight closure determined by
  the test ideal?}, J. Commut. Algebra \textbf{1} (2009), 591--602.

\end{thebibliography}
\end{document}